
\documentclass[final,onefignum,oneeqnum,onetabnum]{siamart171218}

\usepackage{url}

\usepackage{amsmath}
\usepackage{amsfonts}
\usepackage{amssymb}


\newcommand\N{\mathbb{N}}
\newcommand\R{\mathbb{R}}

\newcommand\abs[1]{|#1|}

\newcommand{\inner}[2]{\langle#1,#2\rangle}

\newcommand\set[1]{\{#1\}}
\newcommand\Set[1]{\left\{#1\right\}}




 \newtheorem{remark}{Remark}[section]

\newcommand{\be}{\begin{equation}}
\newcommand{\ee}{\end{equation}}

\def\dom{{\rm dom}}







\usepackage{amsfonts}
\usepackage{graphicx}
\usepackage{epstopdf}
\usepackage{algorithmic}
\usepackage{booktabs}
\ifpdf
  \DeclareGraphicsExtensions{.eps,.pdf,.png,.jpg}
\else
  \DeclareGraphicsExtensions{.eps}
\fi


\newcommand{\TheTitle}{Inexact cuts in SDDP applied to multistage stochastic nondifferentiable problems}
\newcommand{\TheAuthors}{}

\author{
  Vincent Guigues\thanks{School  of Applied Mathematics, Funda\c{c}\~ao Getulio Vargas,
190 Praia de Botafogo, Rio de Janeiro, Brazil,
    (\email{vincent.guigues@fgv.br}). Research of this author was partially supported
    by CNPq grants 401371/2014-0, 204872/2018-9 and 311289/2016-9.}
  \and
     Renato Monteiro\thanks{
     School of Industrial and Systems Engineering,
Georgia Institute of Technology, Atlanta, GA 30332-0205, USA,
    (\email{renato.monteiro@isye.gatech}).
    Research of this author was partly supported by CNPq grant 401371/2014-0.}
    \and
      Benar Svaiter\thanks{
     IMPA, Estrada Dona Castorina, 110, Jardim Bot\^anico, Rio de Janeiro, Brazil.
    (\email{benar@impa.br}).
    Research of this author was partly supported by CNPq grant 401371/2014-0}
}



\title{{\TheTitle}}


\usepackage{amsopn}


\ifpdf
\hypersetup{
  pdftitle={\TheTitle},
  pdfauthor={\TheAuthors}
}
\fi




\begin{document}

\maketitle

\begin{abstract} In \cite{guigues2016isddp}, an Inexact variant of Stochastic Dual Dynamic Programming (SDDP) called
ISDDP was introduced which uses approximate (instead of exact with SDDP) primal dual solutions of the problems solved
in the forward and backward passes of the method. That variant of SDDP was studied in \cite{guigues2016isddp}
for linear and for differentiable nonlinear Multistage Stochastic Programs (MSPs). In this paper, we
extend ISDDP to nondifferentiable MSPs. We first provide formulas for inexact cuts for value functions
of convex nondifferentiable optimization problems. We then combine these cuts with SDDP to describe ISDDP
for nondifferentiable MSPs and analyze the convergence of the method. More precisely, 
for a problem with $T$ stages, we show that
for errors bounded from above by $\varepsilon$, the limit superior and limit inferior of sequences of upper and lower bounds on the optimal
value of the problem are at most at distance $3 \varepsilon  T$ to the optimal value
and that for asymptotically vanishing errors ISDDP converges to an optimal policy.
Finally, we present the results
of encouraging numerical experiments
on a multistage nondifferentiable
stochastic convex program solved
using exact SDDP and the proposed
inexact variant of SDDP.
\end{abstract}

\begin{keywords}
Stochastic optimization, SDDP, Inexact cuts for value functions, Inexact SDDP.
\end{keywords}

\begin{AMS}
 90C15, 90C90, 90C30
\end{AMS}

\section{Introduction}

Multistage stochastic programs (MSPs) offer a framework to model many real-life applications but are challenging
to solve, see \cite{shadenrbook} for a thorough review on MSPs.

A possible approach to approximately solve such problems is to restrict the policies
to be decision rules belonging to specific classes of parametric functions, see for instance 
\cite{kuhn11} and references therein. In this situation, most studies have focused on classes of problems and of decision rules allowing for a reformulation of the problem (either tight or with controlled accuracy) as a tractable optimization problem, i.e., a well structured convex optimization problem.
This strategy has also been used in the context of Robust Optimization where 
uncertain parameters are assumed to belong to convex, nonempty, compact sets (see \cite{nembook}
for a thorough presentation of Robust Optimization) for instance in \cite{nem}.

Another approach to solve MSPs formulated using Dynamic Programming equations
is to approximate the recourse functions. Two important classes of such methods
are Approximate Dynamic Programming \cite{powellbook} and Stochastic Dual Dynamic Programming (SDDP)
\cite{pereira} which is a sampling-based extension of the Nested Decomposition method \cite{birgemulti},
closely related to Stochastic Decomposition \cite{hise96}.

Several variants of SDDP have been proposed such as CUPPS \cite{chenpowell99}, 
ReSa \cite{resa}, the Abridged Nested Decomposition \cite{birgedono}, MIDAS
\cite{bonnansphilp19} for monotonic Bellman functions, or risk-averse variants \cite{guiguesrom10}, \cite{shapsddp}, \cite{guiguescoap2013}, \cite{kozmikmorton}.
For convergence analysis of the method and variants see \cite{philpot},\cite{lecphilgirar12},\cite{guiguessiopt2016}, \cite{bandarraguigues}.
We also refer to \cite{shap192} which explains how to take advantage of the stationarity of the
underlying stochastic processes to solve MSPs with SDDP and to \cite{guigues2016isddp}, \cite{shap191}
for variants which can accelerate the convergence of SDDP. 
In particular, in \cite{guigues2016isddp},  an Inexact variant of SDDP called ISDDP was introduced
which allows us to solve approximately the optimization subproblems of the forward and backward passes
of SDDP and to increase the accuracy of the solutions of these subproblems along the iterations
of the method. 
ISDDP can be seen as an extension  
to multistage and both linear and nonlinear problems of  \cite{philpzakinex} 
where inexact cuts were combined with
Benders Decomposition \cite{benderscut} to solve two-stage stochastic linear programs.
An inexact Stochastic Dynamic Cutting Plane (another variant of SDDP solving approximately
the subproblems along the iterations of the method) was also introduced 
in \cite{guiguesmonteiro2019} to solve MSPs. For all these inexact variants,
convergence can be shown for vanishing noises and numerical experiments in \cite{philpzakinex}, \cite{guigues2016isddp}  
have shown that convergence
can be achieved quicker with these inexact variants.

The motivation for introducing inexact cuts obtained from the approximate primal-dual solutions
of the convex nonlinear subproblems generated during the course of SDDP is due to the following reasons:
\begin{itemize}
\item[(i)]  a convex nonlinear subproblem can take a significant amount of time or may even be impossible to be
solved
to high accuracy;
\item[(ii)] it is advantageous from a practical point of view to solve the initial subproblems generated by SDDP
with much less accuracy than the ones generated
during its late stages; in fact, the implementation presented in
\cite{guigues2016isddp} shows that an inexact SDDP variant based on this idea
outperforms exact SDDP 
on several instances of a portfolio problem (see also the numerical experiments in Section \ref{sec:numexp} below).

\end{itemize}
 
In this paper, we extend the results
of \cite{guigues2016isddp} to the nondifferentiable case, proposing
and studying Inexact SDDP for 
possibly nondifferentiable multistage
stochastic convex programs.
More precisely, the contributions
of this paper are given below.\\

\par {\textbf{Contributions.}}\\

\par {\textbf{A. Deriving
formulas for inexact cuts for value functions
of possibly nondifferentiable optimization problems.}} An important tool in the development of inexact variants of SDDP is the 
computation of inexact cuts for value functions of optimization problems, i.e.,
affine lower bounding functions for the value function on the basis of approximate primal-dual
solutions. This task can be easily achieved for value functions of linear programs, see for instance
Proposition 2.1 in \cite{guigues2016isddp}. For nonlinear differentiable problems, the derivation
of inexact cuts is given in Propositions 2.2 and 2.3 in \cite{guigues2016isddp} and Proposition 3.8 in \cite{guiguesinexactsmd}. 
However, this task is more complicated for nondifferentiable optimization problems.

\par We extend these results developping tools to compute
inexact cuts for value functions of nondifferentiable optimization problems. 
Mathematically, the problem
can be stated as follows. Let $\mathcal{Q}: X \rightarrow \mathbb{R}$ be the value function given by
\begin{equation} \label{vfunction1}
\mathcal{Q}(x)=\left\{
\begin{array}{l}
\min_{
y \in \mathbb{R}^m} \;f(y, x)\\
y \in Y, Ay + B x =b, g_i(y, x ) \leq 0, i=1,\ldots,p,
\end{array}
\right.
\end{equation}
where $X \subseteq \mathbb{R}^n, Y \subseteq \mathbb{R}^m$ and where \\
\par (H0) $X$ and $Y$ are convex, closed, and nonempty sets and $f, g_i: Y \small{\times} X \rightarrow 
]-\infty,+\infty]$ are proper, lower semicontinuous, convex, and possibly nondifferentiable.\\

Due to (H0) value function $\mathcal{Q}$ is convex and if $\bar x \in \mbox{ri}(\mbox{dom}(\mathcal{Q}))$
then $\mathcal{Q}$ is subdifferentiable at $\bar x$ and there exists a cut (a lower bounding affine function)
for $\mathcal{Q}$ at $\bar x$ which coincides with $\mathcal{Q}$ at $\bar x$.
More generally, under some assumptions, the characterization of the subdifferential of $\mathcal{Q}$ at $\bar x \in X$ was given in
\cite[Lemma 2.1]{guiguessiopt2016} and formulas for affine lower bounding functions for 
$\mathcal{Q}$ were derived in
\cite[Proposition 3.2]{guiguesinexactsmd}
on the basis of optimal primal-dual solutions to \eqref{vfunction1}.
When only approximate primal-dual solutions are available, we can only compute inexact cuts which are still lower bounding functions for the value function
but which do not coincide with this function at the point $\bar x$
used to compute the cut.
Formulas for computing inexact cuts on the basis of approximate primal-dual solutions to \eqref{vfunction1} were derived in
\cite{guigues2016isddp, guiguesinexactsmd} when functions $f, g_i$ are differentiable.
In this paper, we extend in
Sections \ref{sec:icut1}, \ref{sec:icut2} this analysis considering possibly nondifferentiable functions $f, g_i$.

\par {\textbf{A.1).}} More precisely, in Section \ref{sec:icut1} we derive inexact cuts using a reformulation
of the problem that adds some variables and constraints. Such copies of (state)
variables have been
used to derive cuts in several
publications, for instance 
\cite{sddip}. The novelty of the
cuts we derive comes from the
fact that they are built on the basis
of approximate primal-dual solutions
and we provide the level of inexactness
of the cuts, see Proposition \ref{icutsec2}
and Corollary \ref{intcor}. 
In particular, Corollary \ref{intcor}
provides cuts easier to compute
than the inexact cuts from
\cite{guigues2016isddp} and
easy to interpret. Indeed, 
while the computation of the cuts
from \cite{guigues2016isddp}
requires solving an additional
optimization problem, Corollary
\ref{intcor}  provides an analytic formula
for the inexact cuts with
the slope being simply an 
approximate dual solution, the
intercept being the dual problem
approximate optimal value, and
the level of inexactness being the 
difference
between the approximate primal and dual
optimal values. 
For convex problems,
such copy of state variables 
is not needed to compute
exact cuts (on the basis of exact
primal-dual solutions), see
\cite[Lemma 2.1]{guiguessiopt2016},
but it offers a simple way
to derive cuts in the inexact case.
\par {\textbf{A.2).}} Section \ref{sec:icut2} provides formulas for inexact cuts when the objective $f$ has
a saddle point representation.
The advantage of these cuts, compared
to the cuts derived in
Section \ref{sec:icut1},
is that they are computed without
adding additional variables and constraints.

\par {\textbf{B. Comparison with the cuts
from \cite{guigues2016isddp} in the differentiable case.}} In the case when $f$ and $g_i$ are differentiable,
we compare in Section \ref{sec:comparison} the formulas for inexact cuts from \cite{guigues2016isddp} and the formulas 
from Section \ref{sec:icut1}. 
In particular, on the basis
of characterizations of approximate
$\varepsilon$-optimal
primal-dual solutions, we provide upper
bounds on the level of inexactness
of the cuts.

\par {\textbf{C. Inexact cuts in SDDP
for nondifferentiable problems.}}
In Section \ref{sec:isddp}, we describe ISDDP for possibly nondifferentiable MSPs
combining the framework of SDDP with the inexact cuts derived in Sections \ref{sec:icut1} and \ref{sec:icut2}.

\par {\textbf{D. Convergence of Inexact SDDP
for nondifferentiable problems.}}
In Section \ref{sec:isddp}, we also study the convergence of ISDDP. 
A useful tool for the convergence analysis of SDDP and ISDDP is Lemma 5.2 in \cite{lecphilgirar12} for vanishing errors
and Lemma 4.1 in \cite{guigues2016isddp} for bounded errors. 
We provide different proofs of these lemmas with slightly different assumptions (see the corresponding
Lemmas \ref{technicallemma} and \ref{limsuptechlemma}) and derive a stronger conclusion.
More precisely, one of our assumptions is stronger (the continuity of $f$ [which is satisfied when the lemmas are
applied to study the convergence of ISDDP]) and two are weaker.
We  show the almost sure uniform convergence of the approximate Bellman
functions generated by ISDDP to a continuous function which coincides with the true Bellman functions at all accumulation points
of the sequences of trial points. Interestingly, as for ISDDP applied to linear
programs studied in \cite{guigues2016isddp}, we show that 
for a problem with $T$ stages and errors bounded from above by $\varepsilon$, the limit superior and limit inferior of sequences of upper and lower bounds on the optimal
value of the problem are at most at distance $3 \varepsilon  T$ to the optimal value.
Finally, similarly to ISDDP for nonlinear differentiable programs developped in \cite{guigues2016isddp}, we show the convergence of ISDDP to an optimal policy for vanishing noises.

\par {\textbf{E. Numerical experiments.}}
We consider 2 instances of a nondifferentiable
multistage stochastic program
and solve them using both
exact and inexact variants of
SDDP (the one proposed in
\cite{guigues2016isddp}
and Inexact SDDP given in this paper).
We also consider a solution
method called {\tt{MSDDP}}
 mixing StoDCuP from 
\cite{guiguesmonteiro2019}
and Inexact SDDP. 
On these experiments, 
the inexact variants of {\tt{MSDDP}}
and of SDDP developped in this paper
converge quicker than (exact) SDDP.
\if{

\section{Inexact cuts for value functions of convex nondifferentiable problems
with the argument in the objective function only}\label{sec:inexccutuncons}

\begin{definition}[$\varepsilon$-inexact cut.] Let $\mathcal{Q}:X \rightarrow \mathbb{R}$ be a convex function
with $X$ convex, $X \subset \emph{ri}(\emph{dom}(\mathcal{Q}))$, and let $\varepsilon \geq 0$. We say that $\mathcal{C}: X \rightarrow  \mathbb{R}$ is an $\varepsilon$-inexact cut
for $\mathcal{Q}$ at $\bar x \in X$ if $\mathcal{C}$ is an affine function satisfying
$\mathcal{Q}( x ) \geq \mathcal{C}( x)$ for all $x \in X$
and $\mathcal{Q}( \bar x ) - \mathcal{C}( \bar x ) \leq \varepsilon$.
\end{definition}
\begin{remark} A 0-inexact cut for $\mathcal{Q}$ at $\bar x$, i.e., an $\varepsilon$-inexact cut at $\bar x$ with $\varepsilon=0$ will be called an exact cut for
$\mathcal{Q}$ at $\bar x$.
\end{remark}

In this section, we consider the case when \eqref{vfunction1} has 
constraints that do not depend on $x$. The results obtained in this case
will be useful to consider problems of form \eqref{vfunction1}.

Let $f: \mathbb{R}^m \small{\times} \mathbb{R}^n \rightarrow (-\infty,+\infty]$ be a proper closed convex
(not necessarily differentiable) function and consider the parametrized minimization problem
\begin{equation}\label{defvalueqsimple}
\mathcal{Q}(x) = \inf_{y \in \mathbb{R}^m} f_x(y) := f(y,x)
\end{equation}
and assume that $\dom(Q) \ne \emptyset$. We start with a few elementary observations that will be used in the sequel.

\begin{proposition}\label{propvalueffirst} Consider value function $\mathcal{Q}$ given by \eqref{defvalueqsimple}.
Then:
\begin{itemize}
 \item[(i)] the conjugate of $\mathcal{Q}$ is given by 
 $\mathcal{Q}^*(\lambda)=f^*(0,\lambda)$ for all $\lambda \in \mbox{dom}(\mathcal{Q})$.
 \item[(ii)] For every $x \in \mbox{dom}(\mathcal{Q})$, we have
 $\overline{\emph{co}}\; \mathcal{Q} (x) = \displaystyle \sup_{\lambda \in \mathbb{R}^n } \langle \lambda, x \rangle - f^*(0,\lambda)$.
\end{itemize}
\end{proposition}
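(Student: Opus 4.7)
The plan is to establish both assertions directly from the definition of the Fenchel conjugate combined with the Fenchel--Moreau biconjugate theorem; both reduce to short convex-analytic calculations, with the only non-trivial ingredient being a properness verification in part~(ii).

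For (i), I would expand $\mathcal{Q}^*(\lambda) = \sup_{x \in \mathbb{R}^n}\{\langle \lambda,x\rangle - \mathcal{Q}(x)\}$, substitute $\mathcal{Q}(x) = \inf_{y\in\mathbb{R}^m} f(y,x)$, and apply the elementary identity $-\inf_y \varphi(y) = \sup_y(-\varphi(y))$ to collapse the two successive optimizations into a single joint supremum over $(y,x)\in \mathbb{R}^m\times\mathbb{R}^n$. Writing $\langle\lambda,x\rangle = \langle (0,\lambda),(y,x)\rangle$ then identifies the resulting expression directly with $f^*(0,\lambda)$, which is the claim. Note that the equality holds for every $\lambda\in\mathbb{R}^n$, valued in $(-\infty,+\infty]$, so no side condition on $\lambda$ is needed for the calculation itself.

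For (ii), the plan is to invoke the Fenchel--Moreau theorem, which asserts that $\overline{\emph{co}}\,\mathcal{Q} = \mathcal{Q}^{**}$ whenever $\mathcal{Q}$ is proper and its closed convex hull is proper. Expanding the biconjugate and substituting~(i) gives
\[
\overline{\emph{co}}\,\mathcal{Q}(x) \;=\; \sup_{\lambda\in\mathbb{R}^n}\bigl\{\langle\lambda,x\rangle - \mathcal{Q}^*(\lambda)\bigr\} \;=\; \sup_{\lambda\in\mathbb{R}^n}\bigl\{\langle\lambda,x\rangle - f^*(0,\lambda)\bigr\},
\]
which is precisely the stated formula.

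I do not anticipate any substantive obstacle; the one point requiring care is verifying the properness hypotheses used in (ii). Properness of $\mathcal{Q}$ is immediate from the standing assumption $\mbox{dom}(\mathcal{Q})\neq\emptyset$. Properness of $\overline{\emph{co}}\,\mathcal{Q}$ (equivalently, $\mathcal{Q}^*\not\equiv +\infty$) follows because the proper closed convex function $f$ admits some affine minorant on $\mathbb{R}^m\times\mathbb{R}^n$, and this minorization descends through the infimum in $y$ to give an affine lower bound on $\mathcal{Q}$, excluding the value $-\infty$ for the closed convex envelope. Once these two points are recorded, Fenchel--Moreau applies and part~(ii) is immediate from part~(i).
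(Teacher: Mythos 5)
Your proof of (i) and the biconjugate computation in (ii) are exactly the paper's argument: collapse $\sup_x$ and $-\inf_y$ into a joint supremum to get $\mathcal{Q}^*(\lambda)=f^*(0,\lambda)$, then write $\overline{\mathrm{co}}\,\mathcal{Q}=\mathcal{Q}^{**}$ and substitute. The one place you diverge is the properness verification you add for (ii), and that step is flawed: a generic affine minorant $\langle u,y\rangle+\langle v,x\rangle-c$ of $f$ does \emph{not} descend through $\inf_y$ unless $u=0$, since otherwise $\inf_y\langle u,y\rangle=-\infty$. Producing a minorant with $u=0$ is equivalent to finding $\lambda$ with $(0,\lambda)\in\mathrm{dom}(f^*)$, i.e.\ to the very properness of $\mathcal{Q}^*$ you are trying to establish, and it can genuinely fail under the stated hypotheses (e.g.\ $f(y,x)=y+\|x\|^2$ gives $\mathcal{Q}\equiv-\infty$ with $\mathrm{dom}(\mathcal{Q})=\mathbb{R}^n$). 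The paper sidesteps this by citing the version of the biconjugate identity in Hiriart-Urruty and Lemar\'echal, under whose conventions the stated equality also holds in the degenerate case (both sides are $-\infty$), so your conclusion survives; only your justification of the Fenchel--Moreau hypothesis is circular and should either be dropped in favor of the general form of the theorem or replaced by an explicit additional assumption such as $\mathrm{dom}(f^*)\cap(\{0\}\times\mathbb{R}^n)\neq\emptyset$.
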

\begin{proof} (i) We compute
\begin{eqnarray*}
\mathcal{Q}^*(\lambda) &  = & \displaystyle \sup_{x \in \mathbb{R}^n} \, \langle \lambda , x \rangle - \mathcal{Q}(x)  =  
\sup_{x \in \mathbb{R}^n } \,
 \langle \lambda , x \rangle - \inf_{y \in \mathbb{R}^m} f(y,x) \\
& = & \sup_{(y,x) \in \mathbb{R}^m \times \mathbb{R}^n } \langle \lambda , x \rangle - f(y,x)  =  f^*(0,\lambda).
\end{eqnarray*}
(ii) Knowing that $\mbox{cl }\mbox{co} \mathcal{Q}(x) =  \mathcal{Q}^{**}( x )$ (see 6.3.3, p.44 in \cite{hhlem}), we also have
\begin{eqnarray*}
\mbox{cl }\mbox{co} \mathcal{Q} (x)  =   \mathcal{Q}^{**}( x )  =  \sup_{\lambda \in \mathbb{R}^n } \, \langle \lambda , x \rangle - \mathcal{Q}^*(\lambda)  \stackrel{(i)}{=}  \sup_{\lambda \in \mathbb{R}^n} \, \langle \lambda , x \rangle -f^*(0,\lambda).
\end{eqnarray*}
\end{proof}
We define the dual function
\begin{equation}\label{dualfunctioninit}
\theta_x(\lambda) =\langle \lambda , x \rangle - f^*(0,\lambda)
\end{equation}
and the dual problem
\begin{equation}\label{dualgener}
{\underline{\mathcal{Q}}}(x) = \sup_{\lambda \in \mathbb{R}^n} \theta_x( \lambda ).
\end{equation}

\begin{remark} Problem \eqref{defvalueqsimple} can be re-written 
\begin{equation}\label{defvalueqsimple2}
\mathcal{Q}(x) = \inf_{y \in \mathbb{R}^m, z \in \mathbb{R}^n} \{ f(y,z) : z=x \}.
\end{equation}
Function $\theta_x(\lambda)$ is the Lagrangian dual function
associated with the Lagrangian function
$$
L_x(y,z;\lambda)=f(y,z)+\langle \lambda , x - z \rangle 
$$
obtained dualizing the  constraint $z=x$ in \eqref{defvalueqsimple2}. Indeed, 
\begin{eqnarray*}
\theta_x( \lambda ) =  \langle  \lambda , x \rangle - f^{*}(0,\lambda ) = 
\langle \lambda , x \rangle - \sup_{(y,z) \in \mathbb{R}^m \times \mathbb{R}^n} \langle \lambda , z \rangle -f(y,z) =
\inf_{(y,z) \in \mathbb{R}^m \times \mathbb{R}^n} L_x(y,z;\lambda).
\end{eqnarray*}
\end{remark}

\begin{proposition}[Weak duality] For every $y \in \mathbb{R}^m$ and $ \lambda, x \in \mathbb{R}^n$, we have
$$
f_x(y) \geq \mathcal{Q}(x) \geq {\underline{\mathcal{Q}}}(x) \geq \theta_x(\lambda).
$$
\end{proposition}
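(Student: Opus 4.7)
The statement consists of three inequalities to be chained. The first inequality $f_x(y) \geq \mathcal{Q}(x)$ and the last inequality $\underline{\mathcal{Q}}(x) \geq \theta_x(\lambda)$ are both immediate from the defining infimum and supremum in \eqref{defvalueqsimple} and \eqref{dualgener} respectively, and deserve just a one-line remark each. The substantive content is the middle inequality $\mathcal{Q}(x) \geq \underline{\mathcal{Q}}(x)$, which is the usual weak duality between a primal problem and its Lagrangian dual.

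My plan is to prove the middle inequality by showing that $\mathcal{Q}(x) \geq \theta_x(\lambda)$ holds for every $\lambda \in \mathbb{R}^n$ and then taking the supremum over $\lambda$. There are two natural ways to do this, and I would pick whichever reads more cleanly in the paper. The first route invokes part (i) of Proposition \ref{propvalueffirst}, which identifies $\mathcal{Q}^*(\lambda) = f^*(0,\lambda)$. The Fenchel--Young inequality $\mathcal{Q}(x) + \mathcal{Q}^*(\lambda) \geq \langle \lambda, x\rangle$ then gives
\[
\mathcal{Q}(x) \;\geq\; \langle \lambda, x\rangle - f^*(0,\lambda) \;=\; \theta_x(\lambda),
\]
and taking the supremum over $\lambda$ produces $\mathcal{Q}(x) \geq \underline{\mathcal{Q}}(x)$.

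The second route avoids the detour through $\mathcal{Q}^*$ and instead uses the Lagrangian interpretation already noted in the remark before the proposition. Unfolding $f^*(0,\lambda)$ directly, one has
\[
\theta_x(\lambda) \;=\; \langle \lambda, x\rangle - \sup_{(y,z)} \bigl(\langle \lambda, z\rangle - f(y,z)\bigr) \;=\; \inf_{(y,z) \in \mathbb{R}^m \times \mathbb{R}^n} L_x(y,z;\lambda),
\]
and in particular, restricting the infimum to $z = x$ gives $\theta_x(\lambda) \leq \inf_{y} f(y,x) = \mathcal{Q}(x)$.

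There is no real obstacle here: the proposition is a standard weak duality fact and the proof is essentially Fenchel--Young applied through the identity $\mathcal{Q}^*(\lambda) = f^*(0,\lambda)$. The only mild care is to note that the inequality holds even when $\mathcal{Q}(x) = +\infty$ (i.e.\ $x \notin \dom(\mathcal{Q})$), which is trivial, and that no qualification is needed because weak duality is unconditional. I would keep the proof short (four or five lines), referencing Proposition \ref{propvalueffirst}(i) for the conjugacy identity and the Fenchel--Young inequality for the key step.
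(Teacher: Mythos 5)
Your proposal is correct and its first route is essentially the paper's own argument: the paper also dismisses the first and third inequalities as immediate and obtains the middle one from $\mathcal{Q}(x) \geq \mathcal{Q}^{**}(x) = \sup_{\lambda} \theta_x(\lambda)$, which is exactly your Fenchel--Young step combined with the conjugacy identity $\mathcal{Q}^*(\lambda) = f^*(0,\lambda)$ from Proposition \ref{propvalueffirst}. Your second (Lagrangian) route is a fine elementary alternative but is not needed.
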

\begin{proof}
The first and third inequalities are immediate while the second inequality comes from the fact that
$$
\mathcal{Q}(x) \geq \mathcal{Q}^{**}(x)  = \sup_{\lambda \in \mathbb{R}^n} \theta_x( \lambda ) =  {\underline{\mathcal{Q}}}(x),
$$
where for the second equality we have used Proposition \ref{propvalueffirst} and the definition of 
$\theta_x$.
\hfill
\end{proof}

\begin{definition}[$\varepsilon$-optimal primal-dual solution.]
Let $\varepsilon \geq 0$.
$(\hat y, \hat \lambda)$ is an
 $\varepsilon$-optimal primal-dual solution for \eqref{defvalueqsimple} if
 $f(\hat y, x ) - \theta_x( \hat \lambda ) \leq \varepsilon$.
\end{definition}
\begin{definition}[$\varepsilon$-optimal dual solution.]
Let $\varepsilon \geq 0$.
$\hat \lambda$ is a   n
 $\varepsilon$-optimal dual solution for \eqref{defvalueqsimple} if
 ${\underline{\mathcal{Q}}}(x) - \varepsilon \leq \theta_x( \hat \lambda )$.
\end{definition}
\begin{definition} For $x \in \mathbb{R}^n$,  the dual gap for \eqref{defvalueqsimple} is given by
$$
{\tt{Gap}}(x)=\mathcal{Q}(x)-{\underline{\mathcal{Q}}}(x) \geq 0.
$$
\end{definition}

We obtain the following characterization of the $\varepsilon$-subdifferential of $\mathcal{Q}$:

\begin{proposition}\label{epssubsimplecase} Let $\mathcal{Q}$ be the value function given by  \eqref{defvalueqsimple}
and let $\bar x \in \mathbb{R}^n$.
Then
$$
\hat \lambda \in \partial_{\varepsilon} \mathcal{Q}( \bar x )
\Longleftrightarrow {\tt{Gap}}(\bar x) + {\underline{\mathcal{Q}}}(\bar x) - \theta_{\bar x}(\hat \lambda  ) \leq \varepsilon.
$$
\end{proposition}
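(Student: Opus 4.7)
The plan is to chain together the standard Fenchel--Young characterization of the $\varepsilon$-subdifferential with the conjugate identity already proved in Proposition \ref{propvalueffirst}(i).

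First, I would start from the well-known equivalence valid for any proper convex function:
\[
\hat\lambda\in\partial_\varepsilon \mathcal{Q}(\bar x) \quad\Longleftrightarrow\quad \mathcal{Q}(\bar x)+\mathcal{Q}^*(\hat\lambda)\le \langle\hat\lambda,\bar x\rangle+\varepsilon.
\]
(This follows directly from the definition $\mathcal{Q}(x)\ge \mathcal{Q}(\bar x)+\langle\hat\lambda,x-\bar x\rangle-\varepsilon$ for all $x$ by taking the supremum over $x$ of $\langle\hat\lambda,x\rangle-\mathcal{Q}(x)$.) I would state this as the opening identity of the proof; it requires only that $\mathcal{Q}(\bar x)$ be finite, which is the only nontrivial case for the equivalence in the statement.

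Next, I would apply Proposition \ref{propvalueffirst}(i), which gives $\mathcal{Q}^*(\hat\lambda)=f^*(0,\hat\lambda)$. Substituting this into the Fenchel--Young inequality above yields
\[
\mathcal{Q}(\bar x)-\bigl(\langle\hat\lambda,\bar x\rangle-f^*(0,\hat\lambda)\bigr)\le\varepsilon,
\]
which, by the definition \eqref{dualfunctioninit} of $\theta_{\bar x}$, is exactly $\mathcal{Q}(\bar x)-\theta_{\bar x}(\hat\lambda)\le\varepsilon$.

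Finally, I would rewrite $\mathcal{Q}(\bar x)$ as $\mathcal{Q}(\bar x)=\bigl(\mathcal{Q}(\bar x)-\underline{\mathcal{Q}}(\bar x)\bigr)+\underline{\mathcal{Q}}(\bar x)={\tt{Gap}}(\bar x)+\underline{\mathcal{Q}}(\bar x)$, so that the previous inequality becomes the desired
\[
{\tt{Gap}}(\bar x)+\underline{\mathcal{Q}}(\bar x)-\theta_{\bar x}(\hat\lambda)\le\varepsilon.
\]
Since every step is an equivalence, both implications are established. There is essentially no obstacle: the only subtle point worth flagging is the case $\mathcal{Q}(\bar x)=+\infty$, where both sides of the claimed equivalence are vacuously (or trivially) handled---the $\varepsilon$-subdifferential is empty and the Fenchel--Young inequality fails. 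Thus the proof reduces to a direct three-line calculation built on Proposition \ref{propvalueffirst}(i) and the definitions of $\theta_{\bar x}$, $\underline{\mathcal{Q}}$, and ${\tt{Gap}}$.
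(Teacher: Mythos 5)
Your proof is correct and follows essentially the same route as the paper's: both proofs run the identical chain of equivalences, passing from the definition of the $\varepsilon$-subdifferential to the Fenchel--Young inequality $\mathcal{Q}(\bar x)+\mathcal{Q}^*(\hat\lambda)\le\langle\hat\lambda,\bar x\rangle+\varepsilon$, then substituting $\mathcal{Q}^*(\hat\lambda)=f^*(0,\hat\lambda)$ and the definitions of $\theta_{\bar x}$, $\underline{\mathcal{Q}}$, and ${\tt Gap}$. Your explicit remark about the degenerate case $\mathcal{Q}(\bar x)=+\infty$ is a minor extra care the paper omits, but it does not change the argument.
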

\begin{proof}
We have
\begin{eqnarray*}\
\hat \lambda \in \partial_{\varepsilon} \mathcal{Q}( \bar x )
&\Longleftrightarrow &
\mathcal{Q}(x) \geq \mathcal{Q}(\bar x) + \langle \hat \lambda , x - \bar x \rangle - \varepsilon,\; \forall \;x \in \mathbb{R}^n,\\
&\Longleftrightarrow &
\varepsilon - \mathcal{Q}(\bar x) + \langle \hat \lambda , \bar x \rangle
\geq \sup_{x \in \mathbb{R}^n} \langle \hat \lambda , x \rangle - \mathcal{Q}(x)\\
&\Longleftrightarrow &
\varepsilon - \mathcal{Q}(\bar x) + \langle \hat \lambda , \bar x \rangle
\geq \mathcal{Q}^{*}(\hat \lambda )\\
&\Longleftrightarrow &
{\tt{Gap}}(\bar x) + {\underline{\mathcal{Q}}}(\bar x) - \theta_{\bar x}(\hat \lambda  ) \leq \varepsilon.
\end{eqnarray*}
\hfill
\end{proof}
We obtain the following immediate corollary of Proposition \ref{epssubsimplecase}:

\begin{corollary}\label{firstimmcor} Let $\bar x \in \mathbb{R}^n$ be given and assume that
$\mathcal{Q}(\bar x) = {\underline{\mathcal{Q}}}(\bar x)$.
 Then, the following statements are equivalent:
 \begin{itemize}
 \item[(a)] $\hat \lambda \in \partial_{\varepsilon} \mathcal{Q}( \bar x )$;
 \item[(b)] $\hat \lambda$ is a $\varepsilon$-optimal dual solution;
 \item[(c)] $\bar x \in \partial_{\varepsilon}(f^*(0,\cdot))(\hat \lambda)$.
 \end{itemize}
\end{corollary}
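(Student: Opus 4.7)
The plan is to derive both equivalences directly from the preceding Proposition \ref{epssubsimplecase} together with the definitions, since the hypothesis ${\tt{Gap}}(\bar x)=0$ substantially simplifies the characterization.

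For the equivalence (a) $\Leftrightarrow$ (b), I would simply plug the assumption $\mathcal{Q}(\bar x) = {\underline{\mathcal{Q}}}(\bar x)$ into Proposition \ref{epssubsimplecase}. This kills the gap term, reducing the characterization of $\hat \lambda \in \partial_\varepsilon \mathcal{Q}(\bar x)$ to ${\underline{\mathcal{Q}}}(\bar x) - \theta_{\bar x}(\hat \lambda) \leq \varepsilon$, which is exactly the definition of an $\varepsilon$-optimal dual solution. No further work is required here.

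For the equivalence (b) $\Leftrightarrow$ (c), the idea is to unfold the definition of the $\varepsilon$-subdifferential of the convex function $\mu \mapsto f^*(0,\mu)$ at $\hat\lambda$ and match it to the definition of an $\varepsilon$-optimal dual solution. Explicitly, $\bar x \in \partial_\varepsilon(f^*(0,\cdot))(\hat\lambda)$ means that for every $\mu \in \mathbb{R}^n$,
\begin{equation*}
f^*(0,\mu) \geq f^*(0,\hat\lambda) + \langle \bar x, \mu - \hat\lambda \rangle - \varepsilon.
\end{equation*}
Rearranging yields $\langle \hat\lambda, \bar x \rangle - f^*(0,\hat\lambda) + \varepsilon \geq \langle \mu, \bar x \rangle - f^*(0,\mu)$ for all $\mu$. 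By the definition \eqref{dualfunctioninit} of $\theta_{\bar x}$, this is precisely $\theta_{\bar x}(\hat\lambda) + \varepsilon \geq \sup_\mu \theta_{\bar x}(\mu) = {\underline{\mathcal{Q}}}(\bar x)$, i.e., $\hat\lambda$ is an $\varepsilon$-optimal dual solution. Thus (b) $\Leftrightarrow$ (c).

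There is no real obstacle here; the proof is essentially bookkeeping built on Proposition \ref{epssubsimplecase}, the formula $\mathcal{Q}^*(\lambda) = f^*(0,\lambda)$ from Proposition \ref{propvalueffirst}(i), and the definitions of $\theta_{\bar x}$, ${\underline{\mathcal{Q}}}$, and the $\varepsilon$-subdifferential. The only subtle point worth stating explicitly is that the supremum in the dual problem \eqref{dualgener} is attained in the Fenchel conjugate sense inside the manipulation above, so I would emphasize that the equivalence chain uses only the defining inequalities (no attainment or regularity assumption is needed beyond the standing hypotheses on $f$).
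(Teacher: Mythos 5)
Your proof is correct and is essentially the argument the paper intends: the paper presents this as an "immediate corollary" of Proposition \ref{epssubsimplecase} with no written proof, and your two steps — setting ${\tt Gap}(\bar x)=0$ in that proposition to get (a)$\Leftrightarrow$(b), and unfolding the $\varepsilon$-subdifferential of $f^*(0,\cdot)$ against the definition of $\theta_{\bar x}$ and ${\underline{\mathcal{Q}}}$ to get (b)$\Leftrightarrow$(c) — are exactly the bookkeeping being left to the reader. Your closing remark that no attainment of the supremum is needed is accurate and worth keeping.
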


\begin{proposition} Let $\bar x \in \mathbb{R}^n$ be given and assume that
$\mathcal{Q}(\bar x ) = {\underline{\mathcal{Q}}}(\bar x)$. Then
\begin{eqnarray*}
\hat \lambda \in \partial_{\varepsilon} \mathcal{Q}( \bar x )
&\Longleftrightarrow &  \exists \; \bar y \in \mathbb{R}^n \,|\,\left(
\begin{array}{l}
0\\
\hat \lambda
\end{array}
\right)
\in \partial_{\varepsilon +f(\bar y , \bar x ) - \mathcal{Q}(\bar x)} f(\bar y, \bar x ), \\
&\Longleftrightarrow &\left(
\begin{array}{l}
0\\
\hat \lambda
\end{array}
\right)
\in \partial_{\varepsilon +f(\bar y , \bar x ) - \mathcal{Q}(\bar x)} f(\bar y, \bar x )
\;\emph{ for all }\bar y \in \mathbb{R}^n.  
\end{eqnarray*}
\end{proposition}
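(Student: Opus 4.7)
My plan is to chain elementary equivalent reformulations of the $\varepsilon$-subdifferential inequality, using only the representation $\mathcal{Q}(x) = \inf_{y} f(y,x)$ and the fact that an infimum is $\ge a$ iff every value is $\ge a$. In fact, the assumption $\mathcal{Q}(\bar x) = \underline{\mathcal{Q}}(\bar x)$ will not actually be invoked in this equivalence; it is carried along from the hypotheses used in the surrounding Corollary \ref{firstimmcor}.

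First I expand $\hat\lambda \in \partial_\varepsilon \mathcal{Q}(\bar x)$ as
\begin{equation*}
\mathcal{Q}(x) \ge \mathcal{Q}(\bar x) + \langle \hat\lambda, x - \bar x\rangle - \varepsilon \qquad \forall x \in \mathbb{R}^n,
\end{equation*}
and, since the right-hand side is independent of $y$, substitute $\mathcal{Q}(x) = \inf_y f(y,x)$ to obtain the equivalent statement
\begin{equation*}
f(y,x) \ge \mathcal{Q}(\bar x) + \langle \hat\lambda, x - \bar x \rangle - \varepsilon \qquad \forall (y,x) \in \mathbb{R}^m \times \mathbb{R}^n.
\end{equation*}

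Next, for an arbitrary $\bar y \in \mathbb{R}^m$, I add and subtract $f(\bar y,\bar x)$ on the right-hand side and use $\langle 0, y - \bar y\rangle = 0$ to rewrite this inequality as
\begin{equation*}
f(y,x) \ge f(\bar y, \bar x) + \left\langle \left(\begin{array}{l} 0 \\ \hat\lambda \end{array}\right), \left(\begin{array}{l} y-\bar y \\ x-\bar x \end{array}\right) \right\rangle - \big(\varepsilon + f(\bar y,\bar x) - \mathcal{Q}(\bar x)\big)
\end{equation*}
for every $(y,x)$. Since $f(\bar y,\bar x) \ge \mathcal{Q}(\bar x)$ by definition of $\mathcal{Q}$, the new tolerance is nonnegative, and this last inequality is exactly the definition of $(0,\hat\lambda) \in \partial_{\varepsilon + f(\bar y,\bar x) - \mathcal{Q}(\bar x)} f(\bar y,\bar x)$.

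Finally, because $\bar y$ was unconstrained in the passage between the second and third displayed inequalities, the equivalence holds for \emph{every} $\bar y \in \mathbb{R}^m$; in particular the ``there exists $\bar y$'' and the ``for all $\bar y$'' versions are simultaneously equivalent to $\hat\lambda \in \partial_\varepsilon \mathcal{Q}(\bar x)$, closing the triple equivalence. The only mildly delicate bookkeeping is checking that the $\bar y$-dependence absorbed into the new base point $f(\bar y, \bar x)$ is exactly compensated by the correction $f(\bar y,\bar x) - \mathcal{Q}(\bar x)$ added to the tolerance, so that no spurious $\bar y$-dependence remains. Beyond this one-line verification, the argument is a single rewriting of the defining inequality, and I do not anticipate any substantive obstacle.
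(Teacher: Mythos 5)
Your proof is correct. It reaches the same pivotal inequality as the paper --- namely that $\hat\lambda \in \partial_{\varepsilon}\mathcal{Q}(\bar x)$ is equivalent to $f(y,x) \ge \mathcal{Q}(\bar x) + \langle \hat\lambda, x - \bar x\rangle - \varepsilon$ for all $(y,x)$, after which the add-and-subtract of $f(\bar y,\bar x)$ and the $\exists/\forall$ collapse are identical in both arguments --- but you get there by a genuinely more elementary route. The paper first invokes its Corollary on $\varepsilon$-optimal dual solutions to translate $\hat\lambda \in \partial_{\varepsilon}\mathcal{Q}(\bar x)$ into $\theta_{\bar x}(\hat\lambda) \ge \underline{\mathcal{Q}}(\bar x) - \varepsilon$, uses the zero-duality-gap hypothesis $\mathcal{Q}(\bar x) = \underline{\mathcal{Q}}(\bar x)$ to replace $\underline{\mathcal{Q}}(\bar x)$ by $\mathcal{Q}(\bar x)$, and then unfolds $\theta_{\bar x}(\hat\lambda) = \langle\hat\lambda,\bar x\rangle - f^*(0,\hat\lambda)$ via the supremum defining the conjugate. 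You instead expand the defining inequality of $\partial_{\varepsilon}\mathcal{Q}(\bar x)$ and use only that an infimum dominates a bound iff every value does; this bypasses the conjugate, the dual function, and --- as you correctly observe --- the hypothesis $\mathcal{Q}(\bar x) = \underline{\mathcal{Q}}(\bar x)$ altogether, so your argument establishes the proposition in slightly greater generality. (In the paper's chain the gap hypothesis is likewise inessential: it is consumed only in converting $\underline{\mathcal{Q}}(\bar x)$ back to $\mathcal{Q}(\bar x)$, which cancels the first conversion.) The one caveat, shared by both proofs and really a feature of the statement, is that the ``for all $\bar y$'' formulation is only meaningful at points where $f(\bar y,\bar x)$ is finite, since otherwise the tolerance $\varepsilon + f(\bar y,\bar x) - \mathcal{Q}(\bar x)$ degenerates; also note $\bar y$ should range over $\mathbb{R}^m$, not $\mathbb{R}^n$ (a typo already present in the statement).
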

\begin{proof}We have 
\begin{eqnarray*}
\hat \lambda \in \partial_{\varepsilon} \mathcal{Q}( \bar x )
& \stackrel{Cor. \ref{firstimmcor}}{\Longleftrightarrow} & \theta_x( \hat \lambda ) \geq {\underline{\mathcal{Q}}}(\bar x) - \varepsilon  \\
&\Longleftrightarrow & \langle \hat \lambda, \bar x \rangle -f^*(0, \hat \lambda ) \geq \mathcal{Q}( \bar x ) - \varepsilon \\
&\Longleftrightarrow & \langle \hat \lambda, \bar x \rangle \geq \langle \hat \lambda , x \rangle -f(y,x) + \mathcal{Q}( \bar x ) - \varepsilon \mbox{ for all }x, y,\\
&\Longleftrightarrow & f(y,x ) \geq f(\bar y , \bar x ) + \langle \hat \lambda, x-\bar x \rangle -[\varepsilon + f(\bar y , \bar x) - \mathcal{Q}( \bar x )] \mbox{ for all }x, y,\bar y,\\
&\Longleftrightarrow & \exists \bar y \;:\;  f(y,x ) \geq f(\bar y , \bar x ) + \langle \hat \lambda, x-\bar x \rangle -[\varepsilon + f(\bar y , \bar x) - \mathcal{Q}( \bar x )] \mbox{ for all }x, y,\\
&\Longleftrightarrow &  \exists \; \bar y \in \mathbb{R}^n \,|\,\left(
\begin{array}{l}
0\\
\hat \lambda
\end{array}
\right)
\in \partial_{\varepsilon +f(\bar y , \bar x ) - \mathcal{Q}(\bar x)} f(\bar y, \bar x ), \\
&\Longleftrightarrow &\left(
\begin{array}{l}
0\\
\hat \lambda
\end{array}
\right)
\in \partial_{\varepsilon +f(\bar y , \bar x ) - \mathcal{Q}(\bar x)} f(\bar y, \bar x )
\;\mbox{ for all }\bar y \in \mathbb{R}^n.
\end{eqnarray*}
\end{proof}

\begin{corollary} Assume that  $(\bar y, \bar x) \in \mathbb{R}^m \times \mathbb{R}^n$ is such that
$f(\bar y, \bar x)  = \mathcal{Q}(\bar x ) = {\underline{\mathcal{Q}}}(\bar x)$.
Then
\begin{eqnarray*}
\bar \lambda \in \partial_{\varepsilon} \mathcal{Q}( \bar x )
&\Longleftrightarrow &  \left(
\begin{array}{l}
0\\
\bar \lambda
\end{array}
\right)
\in \partial_{\varepsilon} f(\bar y, \bar x ).
\end{eqnarray*}
\end{corollary}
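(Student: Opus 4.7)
The plan is to obtain this corollary as an immediate specialization of the preceding proposition. That proposition asserts, under the hypothesis $\mathcal{Q}(\bar x)={\underline{\mathcal{Q}}}(\bar x)$, the equivalence
\[
\bar\lambda\in\partial_{\varepsilon}\mathcal{Q}(\bar x)\;\Longleftrightarrow\;
\begin{pmatrix}0\\ \bar\lambda\end{pmatrix}\in\partial_{\varepsilon+f(\bar y,\bar x)-\mathcal{Q}(\bar x)}f(\bar y,\bar x)
\quad\text{for every }\bar y\in\mathbb{R}^m,
\]
so all I would need to do is exploit the additional hypothesis $f(\bar y,\bar x)=\mathcal{Q}(\bar x)$ to collapse the enlargement parameter.

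First I would observe that the standing assumption of the corollary, namely $f(\bar y,\bar x)=\mathcal{Q}(\bar x)={\underline{\mathcal{Q}}}(\bar x)$, implies in particular $\mathcal{Q}(\bar x)={\underline{\mathcal{Q}}}(\bar x)$, so the hypothesis of the previous proposition is in force, and the specific point $\bar y$ under consideration satisfies $f(\bar y,\bar x)-\mathcal{Q}(\bar x)=0$. Applying the ``for all $\bar y$'' direction of the previous proposition to this particular $\bar y$ reduces the enlargement index $\varepsilon+f(\bar y,\bar x)-\mathcal{Q}(\bar x)$ to $\varepsilon$, yielding precisely the stated equivalence.

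There is essentially no obstacle: the corollary is a bookkeeping consequence of the proposition once one notes that $\bar y$ is a minimizer of $f(\cdot,\bar x)$ and so achieves the value $\mathcal{Q}(\bar x)$. The only thing worth double-checking is that the direction stated in the corollary (a single chosen $\bar y$ rather than ``for all $\bar y$'') is consistent with the proposition's final line, which asserts the inclusion for \emph{every} $\bar y$; this is automatic, since once we know the inclusion holds for our distinguished $\bar y$, we recover $\bar\lambda\in\partial_{\varepsilon}\mathcal{Q}(\bar x)$ from the proposition, and conversely the proposition guarantees the inclusion for every $\bar y$ (in particular ours).

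Thus the proof I would write is simply: ``Apply the previous proposition with this choice of $\bar y$, using $f(\bar y,\bar x)-\mathcal{Q}(\bar x)=0$.'' No further computation is necessary.
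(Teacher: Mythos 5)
Your proof is correct and matches the paper's intent: the corollary is stated there without proof, precisely as the immediate specialization of the preceding proposition obtained by choosing the distinguished $\bar y$ with $f(\bar y,\bar x)=\mathcal{Q}(\bar x)$, so that the enlargement parameter $\varepsilon+f(\bar y,\bar x)-\mathcal{Q}(\bar x)$ collapses to $\varepsilon$. Your care in using the ``for all $\bar y$'' form of the proposition for one implication and the ``there exists $\bar y$'' form for the converse is exactly what turns the specialization into a genuine equivalence, so nothing further is needed.
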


}\fi

\section{Inexact cuts for value functions of convex optimization problems} \label{sec:icut1}

In the sequel, the usual scalar product in $\mathbb{R}^n$ is denoted by 
$\langle x, y\rangle = x^{\top} y$ for $x, y \in \mathbb{R}^n$.
The corresponding norm is $\|x\|= \|x\|_2=\sqrt{\langle x, x \rangle}$.

The objective of this section is to compute inexact cuts with controlled accuracy $\varepsilon$ for value functions
$\mathcal{Q}$ of form \eqref{vfunction1} on the basis of approximate primal-dual solutions to \eqref{vfunction1}
solved for a given $x=\bar x$. We will call these cuts $\varepsilon$-inexact cuts at $\bar x$: 
\begin{definition}[$\varepsilon$-inexact cut.] Let $\mathcal{Q}:X \rightarrow \mathbb{R}$ be a convex function
with $X$ convex, $X \subset \emph{ri}(\emph{dom}(\mathcal{Q}))$, and let $\varepsilon \geq 0$. We say that $\mathcal{C}: X \rightarrow  \mathbb{R}$ is an $\varepsilon$-inexact cut
for $\mathcal{Q}$ at $\bar x \in X$ if $\mathcal{C}$ is an affine function satisfying
$\mathcal{Q}( x ) \geq \mathcal{C}( x)$ for all $x \in X$
and $\mathcal{Q}( \bar x ) - \mathcal{C}( \bar x ) \leq \varepsilon$.
\end{definition}
\begin{remark} A 0-inexact cut for $\mathcal{Q}$ at $\bar x$, i.e., an $\varepsilon$-inexact cut at $\bar x$ with $\varepsilon=0$ will be called an exact cut for
$\mathcal{Q}$ at $\bar x$.
\end{remark}

\subsection{Affine functions of the argument in the constraints}

We start computing inexact cuts for particular value functions $\mathcal{Q}$
where the argument of this function only appears in the constraints through affine functions
of this argument. The study of this case will help us discuss the general case of a value function
of form \eqref{vfunction1} considered in the next Section \ref{sec:gencut}.

More precisely, we consider value functions $\mathcal{Q}$ of form: 
\begin{equation}\label{pbinitprimal1}
\mathcal{Q}(x) = 
\left\{ 
\begin{array}{l}
\displaystyle \min_{y \in \mathbb{R}^m} \;f(y) \\
g(y) \leq Cx, \\
A y + Bx = b,\\
y \in Y,
\end{array}
\right.
\end{equation} 
along with the corresponding dual problem given by
\begin{equation}\label{pbinitdual1}
\left\{ 
\begin{array}{l}
\displaystyle \max_{\lambda, \mu} \;\theta_{x}(\lambda, \mu )\\
\mu \geq 0, \lambda,
\end{array}
\right.
\end{equation}
where dual function $\theta_{x}(\lambda, \mu )$ is given by
\begin{equation}\label{defdual1x}
\theta_{x}(\lambda, \mu ) =\min \{L_x(y,\lambda,\mu) :  y \in Y  \}
\end{equation}
for the Lagrangian 
$$
L_x(y,\lambda,\mu) = 
f(y) + \langle \lambda , A y + B x - b  \rangle + \langle \mu   , g(y) -Cx  \rangle.
$$
Proposition \ref{propcut1} provides a formula for computing inexact cuts for value function 
$\mathcal{Q}$ given by \eqref{pbinitprimal1}:
\begin{proposition}\label{propcut1} Assume that 
$f:\mathbb{R}^m \rightarrow ]-\infty,+\infty]$ and component functions 
$g_i:\mathbb{R}^m \rightarrow ]-\infty,+\infty],i=1,\ldots,p$, of $g$
are proper, convex, and lower semicontinuous. Assume that $\hat y$ is an 
$\varepsilon_P$-optimal feasible solution of problem \eqref{pbinitprimal1}
for $x=\bar x$ and that $(\hat \lambda , \hat \mu)$ is an $\varepsilon_D$-optimal 
feasible solution of the corresponding dual problem \eqref{pbinitdual1} for $x=\bar x$.
Assume that $f$ is finite on $\{y \in Y: Ay + B{\bar x}=b, g(y) \leq C{\bar x} \}$
and that Slater constraint qualification holds for \eqref{pbinitprimal1}
written for $x=\bar x$, i.e., there is $y_{\bar x} \in \mbox{ri}(Y)$,
such that $A y_{\bar x} + B \bar x = b$, $g( y_{\bar x} ) < C{\bar x}$.
Then 
$$
\mathcal{C}(x) = f(\hat y) - (\varepsilon_P + \varepsilon_D) + \langle B^\top \hat \lambda - C^\top \hat \mu , x - \bar x  \rangle 
$$
is an $(\varepsilon_P + \varepsilon_D)$-inexact cut for $\mathcal{Q}$ at $\bar x$.
\end{proposition}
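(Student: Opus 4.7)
The plan is to verify the two conditions defining an $(\varepsilon_P+\varepsilon_D)$-inexact cut: (i) $\mathcal{C}(x)\le \mathcal{Q}(x)$ for every $x$, and (ii) $\mathcal{Q}(\bar x)-\mathcal{C}(\bar x)\le \varepsilon_P+\varepsilon_D$. The key observation that drives everything is that the Lagrangian $L_x(y,\lambda,\mu)$ is affine in $x$, so the dual value $\theta_x(\hat\lambda,\hat\mu)$ inherits an affine dependence on $x$ whose slope is exactly the one appearing in the cut $\mathcal{C}$.

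First I would compute, by pulling the $x$-dependent terms of $L_x$ outside the infimum over $y\in Y$, the identity
\begin{equation*}
\theta_x(\hat\lambda,\hat\mu)=\theta_{\bar x}(\hat\lambda,\hat\mu)+\langle B^{\top}\hat\lambda-C^{\top}\hat\mu,\,x-\bar x\rangle.
\end{equation*}
Weak duality (which requires only $\hat\mu\ge 0$) then gives $\mathcal{Q}(x)\ge \theta_x(\hat\lambda,\hat\mu)$ for every $x$ in $\mathrm{dom}(\mathcal{Q})$. Next, I would invoke Slater's condition at $\bar x$, which together with convexity and properness yields strong duality for the problem at $\bar x$; combined with $\varepsilon_D$-optimality of $(\hat\lambda,\hat\mu)$ this gives $\theta_{\bar x}(\hat\lambda,\hat\mu)\ge \mathcal{Q}(\bar x)-\varepsilon_D$. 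Using $\varepsilon_P$-optimality of $\hat y$ in the form $\mathcal{Q}(\bar x)\ge f(\hat y)-\varepsilon_P$, I obtain $\theta_{\bar x}(\hat\lambda,\hat\mu)\ge f(\hat y)-\varepsilon_P-\varepsilon_D$. Plugging this into the affine expression for $\theta_x(\hat\lambda,\hat\mu)$ established above yields
\begin{equation*}
\mathcal{Q}(x)\ge f(\hat y)-(\varepsilon_P+\varepsilon_D)+\langle B^{\top}\hat\lambda-C^{\top}\hat\mu,\,x-\bar x\rangle=\mathcal{C}(x),
\end{equation*}
which is condition (i).

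For (ii), note simply that $\hat y$ is feasible at $\bar x$, hence $f(\hat y)\ge \mathcal{Q}(\bar x)$; evaluating $\mathcal{C}$ at $\bar x$ gives $\mathcal{C}(\bar x)=f(\hat y)-(\varepsilon_P+\varepsilon_D)\ge \mathcal{Q}(\bar x)-(\varepsilon_P+\varepsilon_D)$, which is exactly the required bound.

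The only nontrivial ingredient is the appeal to strong duality at $\bar x$, which is where the Slater hypothesis, the convexity of $f$ and $g_i$, the convexity of $Y$, and the finiteness of $f$ on the feasible set are all needed; everything else is a direct bookkeeping of the weak duality inequality together with the affinity of $x\mapsto \theta_x(\hat\lambda,\hat\mu)$. The main obstacle is simply making sure that the Slater qualification in the stated form (relative interior of $Y$, strict inequality in $g$, equality in the affine constraint) is strong enough to license classical Lagrangian strong duality for a convex nonsmooth program with an abstract constraint $y\in Y$; this is standard but worth citing carefully.
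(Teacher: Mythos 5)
Your proposal is correct and follows essentially the same route as the paper's proof: weak duality plus the affine dependence of $x\mapsto\theta_x(\hat\lambda,\hat\mu)$ gives the lower bound, Slater-based strong duality at $\bar x$ converts $\varepsilon_D$-dual-optimality into $\theta_{\bar x}(\hat\lambda,\hat\mu)\ge \mathcal{Q}(\bar x)-\varepsilon_D$, and feasibility of $\hat y$ gives the gap bound at $\bar x$. No gaps; the steps match the paper's argument line for line.
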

\begin{proof}
By definition of $\hat y$, we get 
\begin{equation}\label{yepsprimal}
f( \hat y  ) \leq \mathcal{Q}( \bar x ) + \varepsilon_P.
\end{equation}
The assumptions of the Convex Duality theorem are satisfied 
for problem \eqref{pbinitprimal1} 
and its dual \eqref{pbinitdual1}, both written for $x = \bar x$.
Therefore the optimal value of dual problem 
\eqref{pbinitdual1} 
written for $x=\bar x$ is the optimal value $\mathcal{Q}( \bar x )$ of the corresponding
primal problem. Using the definition of $\hat \lambda, \hat \mu$, it follows that 
\begin{equation}\label{lambdaepsdual}
\theta_{{\bar x}}(\hat \lambda, \hat \mu )
\geq \mathcal{Q}( \bar x ) - \varepsilon_D.
\end{equation}
Next,
$$
\begin{array}{lcl}
\mathcal{Q}(x) &  \geq  &  \theta_{x}(\hat \lambda, \hat \mu ) \mbox{ by weak duality and feasibility of }\hat \mu, \hat \lambda,\\
&  =   & \min \{L_x(y,\hat \lambda,\hat \mu) :  y \in Y  \},\\
&   =   &  \langle \hat \lambda , B(x-\bar x)\rangle 
+ \langle \hat \mu , -C(x-\bar x)\rangle  + \min \{L_{\bar x}(y,\hat \lambda,\hat \mu) :  y \in Y  \}, \\
&   =   & \langle B^\top \hat \lambda - C^\top \hat \mu , x - \bar x  \rangle + \theta_{\bar x}(\hat \lambda , \hat \mu),\\
& \stackrel{\eqref{lambdaepsdual}}{\geq}  & \langle B^\top \hat \lambda - C^\top \hat \mu , x - \bar x  \rangle + \mathcal{Q}(\bar x ) - \varepsilon_D,\\
&  \stackrel{\eqref{yepsprimal}}{\geq}  &  \mathcal{C}(x) :=\langle B^\top \hat \lambda - C^\top \hat \mu , x - \bar x  \rangle + f( \hat y )- \varepsilon_P  - \varepsilon_D.
\end{array}
$$
Moreover, since $f(\hat y) \geq \mathcal{Q}(\bar x)$, we get
$$
\mathcal{Q}(\bar x) - \mathcal{C}(\bar x) =\varepsilon_P  + \varepsilon_D + \mathcal{Q}(\bar x) - f(\hat y) \leq 
\varepsilon_P  + \varepsilon_D,
$$
and we have shown that $\mathcal{C}$ is an $(\varepsilon_P  + \varepsilon_D)$-inexact cut for $\mathcal{Q}$ at $\bar x$.
\end{proof}

\begin{remark} The proof of Proposition \ref{propcut1}
also shows that if $\theta_{\bar x}(\hat \lambda , \hat \mu)$
can be computed exactly (i.e., if optimization problem \eqref{defdual1x} written
for $x=\bar x, \lambda = \hat \lambda, \mu=\hat \mu$ is solved 
to optimality) then
$\mathcal{C}(x) =  \theta_{\bar x}(\hat \lambda , \hat \mu) + \langle B^\top \hat \lambda - C^\top \hat \mu , x - \bar x  \rangle 
$ is an $\varepsilon_D$-inexact cut for $\mathcal{Q}$ at $\bar x$.
\end{remark}

\subsection{General value functions} \label{sec:gencut}

We now consider general value functions of form
\begin{equation}\label{pbinitprimal2}
\mathcal{Q}(x) = 
\left\{ 
\begin{array}{l}
\displaystyle \min_{y \in \mathbb{R}^m} \;f(y,x) \\
g(y,x) \leq 0, \\
A y + Bx = b,\\
y \in Y.
\end{array}
\right.
\end{equation}

Analyzing the proof of Proposition \ref{propcut1} dedicated to the special
case of value functions of form \eqref{pbinitprimal1}, we observe that 
the linearity  in $x$ of Lagrangian function $L$ was crucial to derive our formula
for inexact cuts. The Lagrangian obtained dualizing coupling constraints in problem \eqref{pbinitprimal2}
does not satisfy this property anymore. However, we can reformulate equivalently the problem in such a way
that the Lagrangian of the reformulated problem satisfies this property.
This reformulation is obtained adding variable $z \in \mathbb{R}^n$ together with the constraint
$z=x$.  We obtain the equivalent representation of problem \eqref{pbinitprimal2} under the form
\begin{equation}\label{pbinitprimal3}
\mathcal{Q}(x) = 
\left\{ 
\begin{array}{l}
\displaystyle \min_{y \in \mathbb{R}^m, z \in \mathbb{R}^n} \;f(y,z) \\
g(y,z) \leq 0, \\
A y + B z = b,\\
y \in Y,\\
z=x.
\end{array}
\right.
\end{equation}
The use of the copy $z=x$ of state variables
to derive cuts in the context of SDDP has been used
in several publications, for instance \cite{sddip, sddip2}.
This copy of state variables adds variables and
constraints and is not necessary
for convex problems, even for general value functions
\eqref{vfunction1} having nonlinear coupling constraints,
see  Lemma 2.1 in \cite{guiguessiopt2016} for an analytic
formula for the corresponding exact cuts.
However, the use of copy of state variables offers
a simple way to derive inexact cuts in the convex case,
see the corresponding Proposition \ref{icutsec2} and Corollary 
\ref{intcor} as well
as the more complicated computations of Section \ref{sec:icut2} that
do not use these copies of variables but use a saddle point
representation of the objective.
Denoting by $S$ the set 
\begin{equation}\label{defS}
S=\{(y,z) \in \mathbb{R}^m \small{\times} \mathbb{R}^n : g(y,z) \leq 0, A y + B z = b, y \in Y\},
\end{equation}
and dualizing the coupling constraint $z=x$ in problem \eqref{pbinitprimal3},
we obtain the dual problem given by
\begin{equation}\label{pbinitdual3}
\left\{ 
\begin{array}{l}
\displaystyle \max_{\lambda} \;\theta_{x}(\lambda)\\
\lambda \in \mathbb{R}^n,
\end{array}
\right.
\end{equation}
where dual function $\theta_{x}(\lambda)$ is given by
\begin{equation}\label{dualfunctionsecondcase}
\theta_{x}( \lambda ) = \min \{L_x(y,z,\lambda) :  (y,z) \in S \}
\end{equation}
now for the Lagrangian 
$$
L_x(y,z,\lambda ) = f(y,z) + \langle \lambda , x - z \rangle,
$$
which, as in the special case considered in the previous section, is a linear function of $x$.
Therefore, for every $x, \bar x \in X$, for every $(y,z) \in S$, and $\lambda$, we have 
$$
L_x(y,z,\lambda)=\langle  \lambda , x - \bar x \rangle + L_{\bar x}(y,z,\lambda) 
$$
and the optimal value
$\theta_{x}( \lambda )$ of problem
\eqref{dualfunctionsecondcase} is the sum of $\langle  \lambda , x - \bar x \rangle$ and of
$\theta_{\bar x}( \lambda )$. Observing that from Weak Duality $\theta_{x}( \lambda )$ is a lower bound
on $\mathcal{Q}(x)$, this sum is an affine function of $x$ which is a lower bounding function for $\mathcal{Q}$.
It can be bounded from below in terms of a computable affine function (which therefore is an inexact
cut for $\mathcal{Q}$ at $\bar x$) using an approximate primal-dual solution
if problem \eqref{pbinitprimal2} and its dual \eqref{pbinitdual3} written for $x=\bar x$ satisfy the Slater assumption.

The details of these computations are given in the proof of Proposition \ref{icutsec2} below 
which provides formulas for inexact cuts for value function \eqref{pbinitprimal2}. The proof of the proposition is given for 
completeness but, due to our previous observations, it
is similar to the proof of Proposition \ref{propcut1}.
\begin{proposition}\label{icutsec2}
Let Assumption (H0) hold. Assume that 
$\hat y$ is an 
$\varepsilon_P$-optimal feasible solution of problem \eqref{pbinitprimal2}
for $x=\bar x$ and that $\hat \lambda$ is an $\varepsilon_D$-optimal 
feasible solution of dual problem \eqref{pbinitdual3} 
written for $x=\bar x$.
Assume that 
$f(\cdot,\bar x)$ is finite on 
$\{y \in Y: Ay+b{\bar x}=b, g(y,{\bar x}) \leq 0\}$ and that
the following Slater constraint qualification holds for \eqref{pbinitprimal2}
written for $x=\bar x$:
\begin{equation}\label{slater}
\exists y_{\bar x} \mbox{ such that }(y_{\bar x} , \bar x) \in \mbox{ri}(S)
\end{equation}
where $S$ is given by \eqref{defS}.
Then 
$$
\mathcal{C}(x) = f(\hat y, \bar x) - (\varepsilon_P + \varepsilon_D) + 
\langle \hat \lambda , x - \bar x \rangle 
$$
is an $(\varepsilon_P + \varepsilon_D)$-inexact cut for $\mathcal{Q}$ at $\bar x$.
\end{proposition}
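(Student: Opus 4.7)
The plan is to mirror the proof of Proposition \ref{propcut1}, exploiting the fact that after the $z=x$ reformulation the Lagrangian $L_x(y,z,\lambda)=f(y,z)+\langle\lambda, x-z\rangle$ is affine in $x$. First I would record the two immediate consequences of $\varepsilon$-optimality: $\varepsilon_P$-optimality of $\hat y$ gives $f(\hat y,\bar x)\le \mathcal{Q}(\bar x)+\varepsilon_P$, and if strong duality holds for the problem defining $\mathcal{Q}(\bar x)$ (so that $\sup_\lambda \theta_{\bar x}(\lambda)=\mathcal{Q}(\bar x)$) then $\varepsilon_D$-optimality of $\hat\lambda$ gives $\theta_{\bar x}(\hat\lambda)\ge \mathcal{Q}(\bar x)-\varepsilon_D$.

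To get strong duality I would invoke the Slater-type condition \eqref{slater} for the reformulated problem \eqref{pbinitprimal3}. Because only the equality $z=x$ is dualized, the relative interior condition $(y_{\bar x},\bar x)\in\mathrm{ri}(S)$ together with the finiteness of $f(\cdot,\bar x)$ on the feasible set supplies the standard hypothesis (a feasible point in the relative interior of the domain of the remaining constraints) needed for the Fenchel--Rockafellar/Convex Duality theorem to apply, yielding no duality gap and dual attainment for the parametrized problem at $\bar x$.

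Next, for an arbitrary $x\in X$, weak duality gives $\mathcal{Q}(x)\ge \theta_x(\hat\lambda)$. Using the affine dependence of $L_x$ on $x$, for every feasible $(y,z)\in S$ we have $L_x(y,z,\hat\lambda)=L_{\bar x}(y,z,\hat\lambda)+\langle\hat\lambda,x-\bar x\rangle$, and minimizing over $(y,z)\in S$ yields $\theta_x(\hat\lambda)=\theta_{\bar x}(\hat\lambda)+\langle\hat\lambda,x-\bar x\rangle$. Chaining the three inequalities produces
\[
\mathcal{Q}(x)\;\ge\;\theta_{\bar x}(\hat\lambda)+\langle\hat\lambda,x-\bar x\rangle\;\ge\;\mathcal{Q}(\bar x)-\varepsilon_D+\langle\hat\lambda,x-\bar x\rangle\;\ge\; f(\hat y,\bar x)-(\varepsilon_P+\varepsilon_D)+\langle\hat\lambda,x-\bar x\rangle,
\]
where the last step uses $\mathcal{Q}(\bar x)\ge f(\hat y,\bar x)-\varepsilon_P$. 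This is exactly $\mathcal{C}(x)$, so $\mathcal{C}$ lower bounds $\mathcal{Q}$ on $X$. Finally, at $x=\bar x$ the inner product vanishes and $f(\hat y,\bar x)\ge\mathcal{Q}(\bar x)$ (feasibility of $\hat y$) gives $\mathcal{Q}(\bar x)-\mathcal{C}(\bar x)=\varepsilon_P+\varepsilon_D+\mathcal{Q}(\bar x)-f(\hat y,\bar x)\le\varepsilon_P+\varepsilon_D$.

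The only nontrivial step is the strong duality appeal: the Slater condition is phrased for the reformulated problem via $\mathrm{ri}(S)$, and one must be careful that $S$ encodes the convex inequality constraints $g_i\le 0$, the affine equality $Ay+Bz=b$, and $y\in Y$, so that having a point in $\mathrm{ri}(S)$ above $\bar x$ (the projection condition in \eqref{slater}) combined with finiteness of the objective is genuinely sufficient to guarantee dual attainment and a zero duality gap for the $z=x$-dualized problem. Everything else is a direct transcription of the special case handled in Proposition \ref{propcut1}.
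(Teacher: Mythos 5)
Your proposal is correct and follows essentially the same route as the paper's own proof: the same two $\varepsilon$-optimality inequalities, the same appeal to the Convex Duality theorem under the Slater condition \eqref{slater} to equate $\mathcal{Q}(\bar x)$ with the optimal dual value, the same use of the affine dependence of $L_x(y,z,\lambda)$ on $x$ to write $\theta_x(\hat\lambda)=\theta_{\bar x}(\hat\lambda)+\langle\hat\lambda,x-\bar x\rangle$, and the same closing estimate at $x=\bar x$. (Only zero duality gap, not dual attainment, is actually needed for the $\varepsilon_D$ step, but that is a harmless over-statement.)
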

\begin{proof}
By definition of $\hat y$, we get 
$$
f( \hat y , \bar x ) \leq \mathcal{Q}( \bar x ) + \varepsilon_P.
$$
The assumptions of the Convex Duality theorem for dual problem \eqref{pbinitdual3} and primal
problem \eqref{pbinitprimal2} written for $x=\bar x$ are satisfied and therefore
the optimal value of dual problem 
\eqref{pbinitdual3} 
written for $x=\bar x$ is the optimal value $\mathcal{Q}( \bar x )$ of the corresponding
primal problem. Therefore, using the definition of $\hat \lambda$, we get
$$
\theta_{{\bar x}}(\hat \lambda )
\geq \mathcal{Q}( \bar x   ) - \varepsilon_D.
$$
Next,
$$
\begin{array}{lll}
\mathcal{Q}(x) &  \geq  &  \theta_{x}(\hat \lambda) \mbox{ by weak duality and feasibility of }\hat \lambda,\\
&  =   & \min \{L_x(y,z,\hat \lambda) :  (y,z) \in S  \},\\
&   =   &  \langle \hat \lambda , x-\bar x \rangle 
  + \min \{L_{\bar x}(y,z,\hat \lambda) :  (y,z) \in S  \}, \\
&   =   & \langle \hat \lambda , x-\bar x \rangle + \theta_{\bar x}(\hat \lambda),\\
&  \geq  & \langle \hat \lambda , x-\bar x \rangle + \mathcal{Q}(\bar x ) - \varepsilon_D,\\
&  \geq  &  \mathcal{C}(x) :=\langle \hat \lambda , x-\bar x \rangle + f( \hat y , \bar x)- \varepsilon_P  - \varepsilon_D.
\end{array}
$$
Moreover, since $f(\hat y, \bar x) \geq \mathcal{Q}(\bar x)$, we get
$$
\mathcal{Q}(\bar x) - \mathcal{C}(\bar x) =\varepsilon_P  + \varepsilon_D + \mathcal{Q}(\bar x) - f(\hat y , \bar x) \leq 
\varepsilon_P  + \varepsilon_D,
$$
which achieves the proof.
\end{proof}

\par As before, observe that if $\theta_{\bar x}(\hat \lambda)$ is available, i.e.,
if optimization problem \eqref{dualfunctionsecondcase} written for $x=\bar x$
and $\lambda = \hat \lambda$ is solved to optimality then
$\langle \hat \lambda , x-\bar x \rangle + \theta_{\bar x}(\hat \lambda)$
is an $\varepsilon_D$-inexact cut for $\mathcal{Q}$ at $\bar x$.

\par We also have the following corollary of 
Proposition \ref{icutsec2} that will be used
in the numerical simulations of Section \ref{sec:numexp},
offering an inexact cut easy to implement
as long as we have access to approximate primal-dual
solutions:

\begin{corollary}\label{intcor} Under the assumptions
of Proposition \ref{icutsec2}, let $\hat y$ be 
any approximate optimal and feasible solution of
primal 
 problem \eqref{pbinitprimal2}
for $x=\bar x$ and let
 $\hat \lambda$ be any approximate optimal 
feasible solution of dual problem \eqref{pbinitdual3} 
written for $x=\bar x$.
Then 
$$
\mathcal{C}(x) = \theta_{\bar x}(\hat \lambda) + 
\langle \hat \lambda , x - \bar x \rangle 
$$
is an $(f(\hat y,\bar x)-\theta_{\bar x}(\hat \lambda))$-inexact cut for $\mathcal{Q}$ at $\bar x$.
When $\hat y$ and $\hat \lambda$
are optimal solutions then we get, as expected,
an exact cut since $f(\hat y,\bar x)=\theta_{\bar x}(\hat \lambda)=\mathcal{Q}(\bar x)$.
\end{corollary}
\begin{proof} It suffices to observe that
$\hat y$ is an $\varepsilon_P$
optimal primal solution with
$\varepsilon_P=f(\hat y,\bar x)-\mathcal{Q}(\bar x)$,
that $\hat \lambda$ is an $\varepsilon_D$
optimal dual solution with
$\varepsilon_D=\mathcal{Q}(\bar x)-\theta_{\bar x}(\hat \lambda)$
and to apply Proposition \ref{icutsec2}.
\end{proof}

\par It is also worth mentioning that if we have access to an optimal primal-dual solution to \eqref{pbinitprimal2}
then we can obtain an exact cut for $\mathcal{Q}$ at $\bar x$ directly solving \eqref{pbinitprimal2} and
its dual, without adding constraint $z=x$. More precisely, a characterization of the subdifferentiable of $\mathcal{Q}$
and formulas for exact cuts for $\mathcal{Q}$ given by \eqref{pbinitprimal2} can be found in 
Lemma 2.1 in \cite{guiguessiopt2016} and Proposition 3.2 in \cite{guiguesinexactsmd}.

\section{Inexact cuts for value functions with saddle point representation of the objective} \label{sec:icut2}

The inexact cuts proposed in this section are
based on the observation that many convex functions
have saddle point representations, see for instance
\cite{nesterov03} and Section 5.6.1.1 in \cite{nemmodernconvex}.
More precisely, we assume  that the 
objective function   $f$ has a saddle point representation: if $p=(y,x)$, function $f$
is given by
\begin{equation}\label{fenchelrep}
f(p)=p^T a + \max_{w \in \mathcal{W} } \;[p^T C_0 w - \phi_0( w )]
\end{equation}
for some known convex, proper, lower semicontinuous function $\phi_0$, some known convex, compact, nonempty set $\mathcal{W}$, vector $a$,  and matrix $C_0$. In this situation, we will derive inexact cuts for $\mathcal{Q}$
without additional variables $z \in \mathbb{R}^n$ and constraints $z=x$
introduced in the previous section.

"Well structured" convex functions have 
saddle point representations, see for instance \cite{nesterov03} and Section 5.6.1.1 in \cite{nemmodernconvex} for details.
\begin{example} Function $f(p)=f(y,x)=\|y-x\|_1$ has the 
saddle point representation
$f(p)=f(y,x)=\|y-x\|_1 = \max_{\|w\|_{\infty} \leq 1} [w^T y - w^T x ]$ 
which is of form \eqref{fenchelrep} 
with $\mathcal{W}=\{w : \|w\|_{\infty} \leq 1\}$, 
$C_0=[I;-I]$, and $\phi_0$ the null function. 
\end{example}
We start considering value functions of form
\begin{equation}\label{fenchelpb1}
\mathcal{Q}(x) = \left\{ 
\begin{array}{l}
\displaystyle \min_{y \in \mathbb{R}^m} f(y,x) \\
y \in Y
\end{array}
\right.
\end{equation}
with $Y$ compact, convex, and nonempty. Let $a=[a_2;a_1]$ and let us write matrix $C_0=[A_0;B_0]$ where $A_0$ contains the first $m$ rows and
$B_0$ the last $n$ rows of $C_0$. Representation \eqref{fenchelrep} can then be written
\begin{equation}\label{fenchel2}
f(y, x) = x^T a_1 + y^T a_2 + \max_{w \in \mathcal{W}}  y^T A_0 w + x^T B_0 w - \phi_0( w )
\end{equation}
and problem \eqref{fenchelpb1} becomes the saddle point problem 
\begin{equation}\label{spp}
\mathcal{Q}( x )  = 
\displaystyle \min_{y \in Y} \max_{w \in \mathcal{W}} \;x^T a_1 + y^T a_2 +
y^T A_0 w + x^T B_0 w - \phi_0(w).
\end{equation}
Since $Y$ and $\mathcal{W}$ are convex, compact and nonempty, this saddle point problem can be equivalently written as 
the convex problem
\begin{equation}\label{conv1}
\mathcal{Q}(x)=x^T a_1 + \left\{ 
\begin{array}{l}
\displaystyle \max_{w} \; \theta_x(w ) \\ 
w \in \mathcal{W}
\end{array}
\right.
\end{equation}
where concave function $\theta_x$ is given by
\begin{equation}\label{conv2}
\theta_x(w ) = \left\{ 
\begin{array}{l}
\displaystyle \min_{y} \;L_x(y, w) \\
y \in Y,
\end{array}
\right.
\end{equation}
where 
\begin{equation}\label{lagrangianfirst}
L_x(y, w) =y^T (a_2 + A_0 w )  + x^T B_0 w - \phi_0( w ).
\end{equation}

Once again, the linearity in $x$ of this new Lagrangian function $L_x(y,w)$ will
allow us to derive inexact cuts. However, contrary to the previous section,
this linearity was achieved using a saddle point representation of $f$.
The following proposition provides inexact cuts for $\mathcal{Q}$
given by \eqref{fenchelpb1} with $f$ of the form \eqref{fenchel2}.
\begin{proposition}\label{inexutsFenchelobj} Consider problem 
\eqref{fenchelpb1} with $f$ having a saddle point representation of form \eqref{fenchel2}. 
Assume that $Y$ and $\mathcal{W}$ are compact, convex, and nonempty.
Let $\hat w \in \mathcal{W}$
be an $\varepsilon$-optimal solution of problem \eqref{conv1} written with $x=\bar x$ and
let $\hat y \in Y$ be a $\tau$-optimal solution of problem \eqref{conv2} written with 
$x=\bar x, w = \hat w$. 
Then the affine function 
\begin{equation}\label{cutndiff1}
\mathcal{C}(x):=x^{\top} \Big( a_1 + B_0 \hat w \Big) + {\hat y}^{\top}  \Big(a_2 + A_0 \hat w \Big) -\phi_0(  \hat w   ) - \tau
\end{equation}
is a $(\varepsilon + \tau)$-inexact cut for $\mathcal{Q}$ at $\bar x$.
\end{proposition}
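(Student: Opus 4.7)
\medskip

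\noindent\textbf{Proof proposal.} The plan is to establish the two defining properties of an $(\varepsilon+\tau)$-inexact cut separately: first the global lower bound $\mathcal{C}(x)\leq \mathcal{Q}(x)$ for all $x$, and then the sharpness estimate $\mathcal{Q}(\bar x)-\mathcal{C}(\bar x)\leq \varepsilon+\tau$. The key structural observation I will exploit is that the Lagrangian in \eqref{lagrangianfirst} is affine in $x$, so for every $w$ and every $x,\bar x$
\[
L_x(y,w)=L_{\bar x}(y,w)+(x-\bar x)^{\top}B_0 w,
\]
which immediately yields the decomposition $\theta_x(w)=\theta_{\bar x}(w)+(x-\bar x)^{\top}B_0 w$ by taking the minimum over $y\in Y$ on both sides (the extra term is independent of $y$).

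For the global lower bound, I would start from the representation \eqref{conv1} and use that $\hat w\in\mathcal{W}$ is feasible for the outer maximization, so $\mathcal{Q}(x)\geq x^{\top}a_1+\theta_x(\hat w)$. Using the decomposition above and then the $\tau$-optimality of $\hat y$ for the inner minimization at $(\bar x,\hat w)$ (which gives $\theta_{\bar x}(\hat w)\geq L_{\bar x}(\hat y,\hat w)-\tau$), I would chain
\[
\mathcal{Q}(x)\geq x^{\top}a_1+\theta_{\bar x}(\hat w)+(x-\bar x)^{\top}B_0\hat w\geq x^{\top}a_1+L_{\bar x}(\hat y,\hat w)-\tau+(x-\bar x)^{\top}B_0\hat w.
\]
Substituting the explicit form of $L_{\bar x}(\hat y,\hat w)$ from \eqref{lagrangianfirst} and collecting the terms involving $x$ and $\bar x$ (the $\bar x^{\top}B_0\hat w$ contributions cancel) will produce exactly $\mathcal{C}(x)$ as defined in \eqref{cutndiff1}.

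For the sharpness at $\bar x$, I would combine the $\varepsilon$-optimality of $\hat w$ in \eqref{conv1}, which gives $\mathcal{Q}(\bar x)\leq \bar x^{\top}a_1+\theta_{\bar x}(\hat w)+\varepsilon$, with the trivial upper bound $\theta_{\bar x}(\hat w)\leq L_{\bar x}(\hat y,\hat w)$ (since $\theta_{\bar x}(\hat w)$ is an infimum over $y\in Y$ and $\hat y\in Y$). Plugging in the explicit formula for $L_{\bar x}(\hat y,\hat w)$ and recognizing the result as $\mathcal{C}(\bar x)+\varepsilon+\tau$ closes the estimate.

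There is no real obstacle to surmount here: the whole argument rests on (i) the affinity of $L_x$ in $x$, which is built into the Fenchel-type representation, and (ii) the elementary weak-duality inequalities together with the assumed $\varepsilon$- and $\tau$-optimality. Compactness of $Y$ and $\mathcal{W}$ is used only to justify the min-max equivalence \eqref{conv1}--\eqref{conv2}, which is already in place by the time the proposition is stated, so the proof should be essentially a bookkeeping exercise parallel in spirit (though lighter, since no explicit Slater condition is invoked) to the proof of Proposition \ref{icutsec2}.
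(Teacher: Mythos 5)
Your proposal is correct and follows essentially the same route as the paper's proof: the affine dependence of $L_x(y,w)$ on $x$ gives $\theta_x(\hat w)=\theta_{\bar x}(\hat w)+(x-\bar x)^{\top}B_0\hat w$, the lower bound follows from feasibility of $\hat w$ in \eqref{conv1} plus $\tau$-optimality of $\hat y$, and the sharpness at $\bar x$ from $\varepsilon$-optimality of $\hat w$ plus $\theta_{\bar x}(\hat w)\leq L_{\bar x}(\hat y,\hat w)$. The only cosmetic difference is that the paper introduces an exact saddle point $(\bar y,\bar w)$ and writes $\mathcal{Q}(\bar x)=\bar x^{\top}a_1+\theta_{\bar x}(\bar w)$ before invoking $\varepsilon$-optimality, which is equivalent to your direct inequality.
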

\begin{proof} Let $(\bar y , \bar w)$ be an optimal solution of saddle point problem \eqref{spp} with $x=\bar x$.
By definition of $\hat w$ and $\hat y$, we have 
\begin{equation}\label{firstrel}
\theta_{\bar x}(\bar w ) - \varepsilon \leq \theta_{\bar x}( \hat w ) \mbox{ and }
\theta_{\bar x}( \hat w ) + \tau \geq L_{\bar x}( \hat y , \hat w ) \geq \theta_{\bar x}(\hat w ).
\end{equation}
By linearity of $L_{\cdot}(y,w)$ we get for every $y \in Y, w \in \mathcal{W}$, that
\begin{equation}\label{linL}
L_x( y , w ) =
L_{\bar x}(y , w) + ( x - \bar x )^T B_0 w.
\end{equation}
Next, using representation \eqref{conv1} of $\mathcal{Q}$ and the fact that $\hat w \in \mathcal{W}$ we have 
$$
\begin{array}{lcl}
\mathcal{Q}(x) & \geq &  x^T a_1  + \theta_x( \hat w )\\
& = &  x^T a_1  + \left\{ 
\begin{array}{l}
\min \;L_x(y, \hat w) \\
y \in Y,
\end{array}
\right.\\
& \stackrel{\eqref{linL}}{=} &  x^T a_1  + ( x - \bar x )^T B_0 \hat w   + \left\{ 
\begin{array}{l}
\min \;L_{\bar x}(y, \hat w) \\
y \in Y,
\end{array}
\right.\\
& = &  x^T a_1  + ( x - \bar x )^T B_0 \hat w +  \theta_{\bar x}( \hat w )\\
& \stackrel{\eqref{firstrel}}{\geq } &  x^T a_1  +  ( x - \bar x )^T B_0 \hat w + L_{\bar x}( \hat y , \hat w ) - \tau \\
& \stackrel{\eqref{cutndiff1}}{=} &  \mathcal{C}(x).
\end{array}
$$
Moreover,
$$
0 \leq \mathcal{Q}( \bar x ) - \mathcal{C}( \bar x)= \tau + \theta_{\bar x}(\bar w ) - L_{\bar x}( \hat y , \hat w ) 
 \stackrel{\eqref{conv2}}{\leq} \tau +  \theta_{\bar x}(\bar w ) - \theta_{\bar x}( \hat  w ) \leq  \tau + \varepsilon,
$$
which achieves the proof of the proposition.
\end{proof}
Now consider value function $\mathcal{Q}$ given by
\begin{equation}\label{fvaleurlinconstr}
\mathcal{Q}( x )  = 
\left\{
\begin{array}{l}
\min f(y,x) \\
y \in Y,\; Ay + B x = b,
\end{array}
\right.
\end{equation}
with $Y$ convex, nonempty, and compact.
If $f$ has a saddle point representation of form \eqref{fenchel2}
with $\mathcal{W}$ convex, nonempty, and compact, value function \eqref{fvaleurlinconstr} can be written 
\begin{equation}\label{fvaleurlinconstrfenchel}
\mathcal{Q}( x )  = 
x^T a_1 + 
\left\{
\begin{array}{l}
\max \theta_x(w) \\
w \in \mathcal{W}
\end{array}
\right.
\end{equation}
where 
\begin{equation}\label{thetaxwnew}
\theta_x( w ) = 
\left\{
\begin{array}{l}
\min \;y^T( a_2 + A_0 w ) + x^T B_0 w - \phi_0( w ) \\
y \in Y, Ay + B x = b.
\end{array}
\right. 
\end{equation}
For problem \eqref{thetaxwnew} define the Lagrangian
\begin{equation}
\begin{array}{lcl}
\mathcal{L}_{x, w }(y, \lambda) &  = &y^T( a_2 + A_0 w ) + x^T B_0 w - \phi_0( w ) +  \lambda^T ( Ay + Bx - b)\\
&=& L_x(y,w) + \lambda^T ( Ay + Bx - b), 
\end{array}
\end{equation}
where $L_x(y,w)$ is given by \eqref{lagrangianfirst}. 
Let us fix $\bar x \in \mathbb{R}^n$ and 
assume that there is $y_0 \in \mbox{ri}(Y)$ such that
$A y_0 + B \bar x = b$. Then by the Convex Duality theorem,  we can express $\theta_{\bar x}( w )$
as the optimal value of the dual of \eqref{thetaxwnew}:
\begin{equation}\label{thetaxreph}
\theta_{\bar x}( w ) = 
\displaystyle \max_{\lambda } \;h_{{\bar x } , w}(\lambda )
\end{equation}
for the dual function
\begin{equation}\label{defhxw}
h_{\bar x, w}(\lambda ) = \left\{ 
\begin{array}{l}
\min \;\mathcal{L}_{\bar x, w}(y, \lambda)\\
y \in Y.
\end{array}
\right.
\end{equation}

\begin{proposition}\label{inexutsFenchelobj2} Consider problem \eqref{fvaleurlinconstr} with $f$ having a saddle point representation of form \eqref{fenchel2}. Assume that sets $Y$ and $\mathcal{W}$ are nonempty, convex, and compact.
Let us fix $\bar x \in \mathbb{R}^n$ and 
assume that there is $y_0 \in \mbox{ri}(Y)$ such that
$A y_0 + B \bar x = b$.
Let $(\bar y , \bar w)$ be an optimal solution of saddle point problem \eqref{fvaleurlinconstrfenchel}  with $x=\bar x$
and let $\hat w \in \mathcal{W}$ 
be an $\varepsilon$-optimal solution of problem \eqref{fvaleurlinconstrfenchel} written with $x=\bar x$:
\begin{equation}\label{firstapprox}
\theta_{\bar x}( \hat w ) \geq \theta_{\bar x}( \bar w ) - \varepsilon, 
\end{equation}
and let $\hat \lambda \in Y$ be a $\delta$-optimal solution of problem  
$$
\theta_{\bar x}( \hat w ) = 
\displaystyle \max_{\lambda} \;h_{\bar x, \hat w}(\lambda ) 
$$
i.e.,
\begin{equation}\label{secondapprox}
h_{\bar x, \hat w}(  \hat \lambda  ) \geq \theta_{\bar x}( \hat w) - \delta.  
\end{equation}
Let $\hat y$ be a $\tau$-optimal feasible solution of 
$$
\theta_{\bar x}( \hat w ) = 
\left\{
\begin{array}{l}
\min \;y^T( a_2 + A_0 \hat w ) + {\bar x}^T B_0 \hat w - \phi_0( \hat w ) \\
y \in Y, Ay + B \bar x = b,
\end{array}
\right. 
$$
i.e.,
\begin{equation}\label{caractyeps}
\hat y \in Y,\; A \hat y + B \bar x = b,\;L_{\bar x}( \hat y, \hat w ) \leq \theta_{\bar x}( \hat w ) + \tau. 
\end{equation}
Then the affine function 
\begin{equation}\label{cutndiff2}
\mathcal{C}(x)=x^T \Big( a_1 + B_0 \hat w + B^T {\hat \lambda} \Big) + {\hat y}^T  \Big(a_2 + A_0 \hat w \Big)
- {\bar x}^T B^T \hat \lambda -\phi_0(  \hat w  ) -\tau -\delta
\end{equation}
is a $(\varepsilon + \tau + \delta)$-inexact cut for 
$\mathcal{Q}$ at $\bar x$.
\end{proposition}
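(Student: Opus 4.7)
My plan is to mirror the structure of Proposition \ref{inexutsFenchelobj} but with an extra layer of Lagrangian duality inserted to handle the affine coupling constraint $Ay+Bx=b$. The crucial structural fact is that the augmented Lagrangian $\mathcal{L}_{x,w}(y,\lambda)=L_x(y,w)+\lambda^{\top}(Ay+Bx-b)$ is affine in $x$ for each fixed $(y,w,\lambda)$, so for every $y\in Y$, $w\in\mathcal{W}$, and $\lambda$,
$$
\mathcal{L}_{x,\hat w}(y,\hat\lambda)=\mathcal{L}_{\bar x,\hat w}(y,\hat\lambda)+(x-\bar x)^{\top}B_0\hat w+\hat\lambda^{\top}B(x-\bar x).
$$
Minimizing over $y\in Y$ and invoking definition \eqref{defhxw}, the same additive shift passes through:
$$
h_{x,\hat w}(\hat\lambda)=h_{\bar x,\hat w}(\hat\lambda)+(x-\bar x)^{\top}B_0\hat w+\hat\lambda^{\top}B(x-\bar x).
$$

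For the lower bounding inequality $\mathcal{Q}(x)\ge\mathcal{C}(x)$, I would chain the following inequalities. From representation \eqref{fvaleurlinconstrfenchel} and $\hat w\in\mathcal{W}$ we get $\mathcal{Q}(x)\ge x^{\top}a_1+\theta_x(\hat w)$. Applying weak duality to problem \eqref{thetaxwnew} with $w=\hat w$ yields $\theta_x(\hat w)\ge h_{x,\hat w}(\hat\lambda)$. Then I use the affine identity above to move to $\bar x$, invoke \eqref{secondapprox} to bound $h_{\bar x,\hat w}(\hat\lambda)\ge\theta_{\bar x}(\hat w)-\delta$, and finally use the fact that $\hat y$ is feasible for problem \eqref{thetaxwnew} at $x=\bar x$, hence $L_{\bar x}(\hat y,\hat w)\ge\theta_{\bar x}(\hat w)$, combined with \eqref{caractyeps} in the form $\theta_{\bar x}(\hat w)\ge L_{\bar x}(\hat y,\hat w)-\tau$. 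Expanding $L_{\bar x}(\hat y,\hat w)=\hat y^{\top}(a_2+A_0\hat w)+\bar x^{\top}B_0\hat w-\phi_0(\hat w)$ and collecting the $x$-dependent terms gives exactly $\mathcal{C}(x)$ as defined in \eqref{cutndiff2}.

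For the tightness inequality $\mathcal{Q}(\bar x)-\mathcal{C}(\bar x)\le\varepsilon+\tau+\delta$, a direct evaluation of $\mathcal{C}(\bar x)$ cancels the $B^{\top}\hat\lambda$ terms and produces $\mathcal{C}(\bar x)=\bar x^{\top}a_1+L_{\bar x}(\hat y,\hat w)-\tau-\delta$. Using $\mathcal{Q}(\bar x)=\bar x^{\top}a_1+\theta_{\bar x}(\bar w)$, the difference reduces to $\theta_{\bar x}(\bar w)-L_{\bar x}(\hat y,\hat w)+\tau+\delta$. Then $L_{\bar x}(\hat y,\hat w)\ge\theta_{\bar x}(\hat w)$ (inner minimization) together with \eqref{firstapprox} bounds $\theta_{\bar x}(\bar w)-L_{\bar x}(\hat y,\hat w)\le\theta_{\bar x}(\bar w)-\theta_{\bar x}(\hat w)\le\varepsilon$, finishing the proof.

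The main obstacle is purely bookkeeping: keeping track of which of $\hat w$, $\hat\lambda$, $\hat y$ contributes which of the three error terms, and making sure that the $(x-\bar x)$ shifts from the two uses of the linearity identity (once via $\theta_x$, once via $h_{x,\hat w}$) combine correctly to give the coefficient $a_1+B_0\hat w+B^{\top}\hat\lambda$ of $x$ in $\mathcal{C}(x)$. No nontrivial convex-analytic input beyond weak duality and the Slater-based strong duality of problem \eqref{thetaxwnew} (already invoked to justify representation \eqref{thetaxreph}) is needed.
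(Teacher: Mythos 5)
Your proposal is correct and follows essentially the same route as the paper's proof: the affine shift identity for $\mathcal{L}_{x,\hat w}(y,\hat\lambda)$ (equation \eqref{linL2} in the paper), the chain $\mathcal{Q}(x)\ge x^{\top}a_1+\theta_x(\hat w)\ge x^{\top}a_1+h_{x,\hat w}(\hat\lambda)$ followed by \eqref{secondapprox} and \eqref{caractyeps}, and the tightness bound via $L_{\bar x}(\hat y,\hat w)\ge\theta_{\bar x}(\hat w)$ and \eqref{firstapprox}. The bookkeeping you flag as the main obstacle works out exactly as you describe, yielding the slope $a_1+B_0\hat w+B^{\top}\hat\lambda$.
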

\begin{proof} By linearity of $\mathcal{L}_{\cdot,w}(y, \lambda)$ we get for every $y \in Y, w \in \mathcal{W}$, that
\begin{equation}\label{linL2}
\mathcal{L}_{x,w}( y , \lambda ) =
\mathcal{L}_{\bar x,w}( y , \lambda ) + ( x - \bar x )^T ( B_0 w + B^T \lambda ).
\end{equation}
Next, using representation \eqref{fvaleurlinconstrfenchel} of $\mathcal{Q}$ and the fact that $\hat w \in \mathcal{W}$ we have 
$$
\begin{array}{lcl}
\mathcal{Q}(x) & \geq &  x^T a_1  + \theta_{x}( \hat w )\\
 & \geq &  x^T a_1  + h_{x, \hat w}( \hat \lambda ),\\
& \stackrel{\eqref{defhxw}}{=} &  x^T a_1  + 
 \left\{ 
\begin{array}{l}
\min \;\mathcal{L}_{x, \hat w}(y, \hat \lambda) \\
y \in Y,
\end{array}
\right.\\
& \stackrel{\eqref{linL2}}{= } &  x^T a_1  +  ( x - \bar x )^T ( B_0 \hat w + B^T \hat \lambda   )  +
\left\{ 
\begin{array}{l}
\min \;\mathcal{L}_{\bar x, \hat w}(y, \hat \lambda) \\
y \in Y,
\end{array}
\right.\\
& \stackrel{\eqref{defhxw}}{=} &  x^T a_1  +  ( x - \bar x )^T ( B_0 \hat w + B^T \hat \lambda   ) + h_{\bar x , \hat w}( \hat \lambda )\\
& \stackrel{\eqref{secondapprox}}{\geq} & x^T a_1  +  ( x - \bar x )^T ( B_0 \hat w + B^T \hat \lambda   ) + \theta_{\bar x}( \hat w )- \delta \\
& \stackrel{\eqref{caractyeps}}{\geq} & x^T a_1  +  ( x - \bar x )^T ( B_0 \hat w + B^T \hat \lambda   ) + L_{\bar x}(\hat y , \hat w) -\tau - \delta \\
& \stackrel{\eqref{cutndiff2}}{=} &  \mathcal{C}(x).
\end{array}
$$
Moreover, if $\bar w$ is an optimal solution of \eqref{fvaleurlinconstrfenchel} written for $x=\bar x$, i.e., 
$\mathcal{Q}( \bar x) = \bar x^T a_1 + \theta_{\bar x}( \bar w)$ we obtain
$$
0 \leq \mathcal{Q}( \bar x ) - \mathcal{C}( \bar x)= \tau + \delta + \theta_{\bar x}(\bar w ) - L_{\bar x}( \hat y , \hat w ) 
 \stackrel{\eqref{thetaxwnew}}{\leq} \tau + \delta +  \theta_{\bar x}(\bar w ) - \theta_{\bar x}( \hat  w ) \stackrel{\eqref{firstapprox}}{\leq}  \tau + \delta + \varepsilon,
$$
which achieves the proof of the proposition.
\end{proof}

\section{Particular case of differentiable problems and comparison with the inexact cuts from \cite{guigues2016isddp}}
\label{sec:comparison}

The following proposition, taken from \cite{guigues2016isddp}, provides an inexact cut for $\mathcal{Q}$ given by \eqref{pbinitprimal2}
when functions $f, g_i$ are differentiable.
\begin{proposition} \label{varprop1}
Consider value function $\mathcal{Q}$ given by \eqref{pbinitprimal2}.
Let Assumption (H0) hold, take $\bar x \in X$, and assume that
\begin{equation}\label{slaterdiff}
\mbox{there exists }y_{\bar x} \in \mbox{ri}(Y)
\mbox{ such that }A y_{\bar x} + B \bar x =b \mbox{ with }g(y_{\bar x},\bar x)<0.
\end{equation}
Assume that $f$ and $g$ are differentiable on $Y \small{\times} X$. 
Let  $\varepsilon \geq 0$,
let $\hat y$ be an $\epsilon$-optimal feasible primal solution for problem \eqref{pbinitprimal2}
written for $x= \bar x$ 
and let $(\hat \lambda, \hat \mu)$ be an $\epsilon$-optimal feasible solution of the
corresponding dual problem given by 
$$
\max_{\mu \geq 0, \lambda} \theta_{x}(\lambda , \mu)
$$
where the dual function $\theta_{x}(\lambda , \mu)$ is given by
\begin{equation}\label{defthetaxdiff}
\theta_{x}(\lambda , \mu) = \min_{y \in Y} L_x(y,\lambda,\mu)
\end{equation}
for the Lagrangian
$$
L_x(y,\lambda,\mu)=f(y,x) + \langle \lambda , Bx + Ay -b\rangle + 
\langle \mu , g(y,x) \rangle.
$$
Assume that $f(\cdot,\bar x)$ is finite on 
\begin{equation}\label{Sxdef}
S(\bar x)=\{y \in Y : Ay + B\bar x=b, g(y,\bar x)\leq 0\}
\end{equation}
and that 
$\eta(\varepsilon)=\ell (\hat y , \bar x  , \hat \lambda , \hat \mu )$ is finite
where 
$$
\ell(\hat y , \bar x  , \hat \lambda , \hat \mu )=\max \{ \langle \nabla_y L_{\bar x}(\hat y , \hat \lambda , \hat \mu)  ,   \hat y   - y \rangle   : y \in Y \}.
$$
Then the affine function
\begin{equation}\label{cutvarprop1}
\mathcal{C}(x):= L_{\bar x} ( \hat y, {\hat \lambda}, \hat \mu )- \eta(\varepsilon)  + 
\langle \nabla_x L_{\bar x} ( \hat y, {\hat \lambda}, \hat \mu ) , x - \bar x \rangle
\end{equation}
is an $(\varepsilon + \ell (\hat y,  \bar x  , \hat \lambda , \hat \mu ))$-inexact
cut for $\mathcal{Q}$ at $\bar x$.
\end{proposition}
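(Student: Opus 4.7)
My plan is to prove the two defining properties of an $(\varepsilon+\ell)$-inexact cut separately: (i) $\mathcal{C}(x)\le \mathcal{Q}(x)$ for all $x\in X$, and (ii) $\mathcal{Q}(\bar x)-\mathcal{C}(\bar x)\le \varepsilon+\ell(\hat y,\bar x,\hat\lambda,\hat\mu)$. The idea is the same as in Proposition \ref{icutsec2}: exploit the convexity of the Lagrangian jointly in $(y,x)$ together with weak/strong duality, but now we replace the ``$\langle\hat\lambda,x-\bar x\rangle$''-type linear correction by a first-order Taylor expansion of $L_x$, since differentiability makes it available.

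\textbf{Step 1 (lower bound for all $x$).} Because $f$ and the components of $g$ are convex in $(y,x)$ and $\hat\mu\ge 0$, the map $(y,x)\mapsto L_x(y,\hat\lambda,\hat\mu)$ is convex and differentiable on $Y\times X$. The subgradient (here, gradient) inequality at $(\hat y,\bar x)$ gives, for every $y\in Y$ and $x\in X$,
\begin{equation*}
L_x(y,\hat\lambda,\hat\mu)\;\ge\; L_{\bar x}(\hat y,\hat\lambda,\hat\mu)+\langle\nabla_y L_{\bar x}(\hat y,\hat\lambda,\hat\mu),y-\hat y\rangle+\langle\nabla_x L_{\bar x}(\hat y,\hat\lambda,\hat\mu),x-\bar x\rangle.
\end{equation*}
Minimizing the right-hand side over $y\in Y$ introduces the quantity $\min_{y\in Y}\langle\nabla_y L_{\bar x}(\hat y,\hat\lambda,\hat\mu),y-\hat y\rangle=-\ell(\hat y,\bar x,\hat\lambda,\hat\mu)=-\eta(\varepsilon)$. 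Combining this with weak duality $\mathcal{Q}(x)\ge\theta_x(\hat\lambda,\hat\mu)=\min_{y\in Y}L_x(y,\hat\lambda,\hat\mu)$ (which uses feasibility of $(\hat\lambda,\hat\mu)$) yields
\begin{equation*}
\mathcal{Q}(x)\;\ge\; L_{\bar x}(\hat y,\hat\lambda,\hat\mu)-\eta(\varepsilon)+\langle\nabla_x L_{\bar x}(\hat y,\hat\lambda,\hat\mu),x-\bar x\rangle\;=\;\mathcal{C}(x).
\end{equation*}

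\textbf{Step 2 (tightness at $\bar x$).} The Slater condition \eqref{slaterdiff} together with (H0) and finiteness of $f$ on $S(\bar x)$ triggers the Convex Duality theorem, so strong duality holds at $\bar x$: the optimal dual value equals $\mathcal{Q}(\bar x)$. Since $(\hat\lambda,\hat\mu)$ is $\varepsilon$-optimal for the dual, $\theta_{\bar x}(\hat\lambda,\hat\mu)\ge\mathcal{Q}(\bar x)-\varepsilon$, and by definition of $\theta_{\bar x}$ as a minimum over $Y$,
\begin{equation*}
L_{\bar x}(\hat y,\hat\lambda,\hat\mu)\;\ge\;\theta_{\bar x}(\hat\lambda,\hat\mu)\;\ge\;\mathcal{Q}(\bar x)-\varepsilon.
\end{equation*}
Evaluating $\mathcal{C}$ at $x=\bar x$ gives $\mathcal{C}(\bar x)=L_{\bar x}(\hat y,\hat\lambda,\hat\mu)-\eta(\varepsilon)\ge\mathcal{Q}(\bar x)-\varepsilon-\ell(\hat y,\bar x,\hat\lambda,\hat\mu)$, which is exactly the required inequality.

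\textbf{Main obstacle.} The only delicate point is verifying that $L_x(y,\hat\lambda,\hat\mu)$ is jointly convex in $(y,x)$, and in particular that the inner minimization over $y\in Y$ of the gradient-linearization term can be taken pointwise in $x$ to yield an \emph{affine} function of $x$ — this is where the separability $\nabla L_{\bar x}=(\nabla_y L_{\bar x},\nabla_x L_{\bar x})$ of the first-order expansion is crucial, and where $\eta(\varepsilon)=\ell(\hat y,\bar x,\hat\lambda,\hat\mu)$ enters as a correction absorbing the non-optimality of $\hat y$ in the $y$-variable. Everything else is bookkeeping with weak and strong duality as in Proposition \ref{icutsec2}.
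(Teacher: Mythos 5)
Your proof is correct. The paper itself imports Proposition \ref{varprop1} from \cite{guigues2016isddp} without reproving it, but your argument --- joint convexity and differentiability of $(y,x)\mapsto L_x(y,\hat\lambda,\hat\mu)$, the gradient inequality at $(\hat y,\bar x)$, minimization over $y\in Y$ producing the constant $-\ell(\hat y,\bar x,\hat\lambda,\hat\mu)$, weak duality for validity and Slater-based strong duality plus dual $\varepsilon$-optimality for tightness --- is exactly the standard proof and coincides with the ingredients the paper itself deploys in inequality \eqref{ineqthetadiff1} of the proof of Proposition \ref{propcompcuts}.
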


\par We want to compare the inexact cuts given by Propositions \ref{icutsec2} and \ref{varprop1} obtained
taking $\varepsilon_D=\varepsilon_P=\varepsilon$ in Proposition \ref{icutsec2}.
For the cut given by Proposition \ref{varprop1} to be valid, we assume that the assumptions
of this proposition are satisfied. In particular, \eqref{slaterdiff} holds. Let us show that
if in addition $Y \times X \subset \mbox{dom}(g_i)$ for all $i=1,\ldots,p$,
this implies that \eqref{slater} holds 
which will imply that the assumptions of Proposition \ref{icutsec2}
are also satisfied and the inexact cut given by that proposition is valid.
Indeed, write set $S$ given by \eqref{defS} as
$S=S_1 \cap S_2 \cap (Y \times \mathbb{R}^n)$
where $S_1=\{(y,z) \in \mathbb{R}^m \small{\times} \mathbb{R}^n : g(y,z) \leq 0\}$ and
$S_2 = \{(y,z) \in \mathbb{R}^m \small{\times} \mathbb{R}^n : A y + B z = b\}$.
We have that $\mbox{ri}(S_2)=S_2$ and
\begin{equation}\label{rigi}
\mbox{ri}(\{g_i \leq 0\})=\{(y,z) \in \mathbb{R}^m \small{\times} \mathbb{R}^n : 
(y,z) \in \mbox{ri}(\mbox{dom}(g_i)), g_i(y,z)<0,i=1,\ldots,p\}.
\end{equation}
Since $Y \times \{\bar x\} \subset \mbox{dom}(g_i), i=1,\ldots,p$,
we have $\mbox{ri}(Y) \times \{\bar x\} \subset \mbox{ri}(\mbox{dom}(g_i)), i=1,\ldots,p$,
implying that set $\cap_{i=1}^p \mbox{ri}(\{g_i \leq 0\})$ is nonempty since it
contains the nonempty set $\mbox{ri}(Y) \times \{\bar x\}$ (this set contains
$(y_{\bar x}, \bar x)$).
Therefore  $\mbox{ri}(S_1) = \cap_{i=1}^p \mbox{ri}(\{g_i \leq 0\})
=\{(y,z) \in \mathbb{R}^m \small{\times} \mathbb{R}^n : 
(y,z) \in \mbox{ri}(\mbox{dom}(g_i)), g_i(y,z)<0, i=1,\ldots,p\}$. 
It follows that convex sets $S_1, S_2$, and $Y \times \mathbb{R}^n$ are convex and satisfy
$\mbox{ri}(S_1) \cap \mbox{ri}(S_2) \cap (\mbox{ri}(Y)\times \mathbb{R}^n) \neq \emptyset$ (they contain
the point $(y_{\bar x}, \bar x)$) which implies that 
$\mbox{ri}(S)=\mbox{ri}(S_1) \cap \mbox{ri}(S_2) \cap (\mbox{ri}(Y)\times \mathbb{R}^n)$
and recalling the representations of $\mbox{ri}(S_1)$ and $\mbox{ri}(S_2)$, we see 
that $(y_{\bar x},\bar x)$ which satisfies \eqref{slaterdiff} also belongs to $\mbox{ri}(S)$, i.e.,
Slater condition \eqref{slater} holds. Therefore, Proposition \ref{icutsec2} provides
a valid $2\varepsilon$-inexact cut for $\mathcal{Q}$.

Let us use the notation $\mathcal{C}_1(x)=\theta_1 + \langle \beta_1 , x - \bar x \rangle $
and $\mathcal{C}_2(x)={\theta}_2 + \langle {\beta}_2 , x - \bar x \rangle $
for respectively 
the inexact cuts given by Propositions \ref{icutsec2} and \ref{varprop1}.
In Proposition \ref{propcompcuts} below, we derive upper and lower bounds on 
$\theta_1-\theta_2=\mathcal{C}_1(\bar x)-\mathcal{C}_2(\bar x)$ (observe that in the exact case, i.e., when $\varepsilon=0$, clearly
$\theta_1=\theta_2$ and $\beta_1=\beta_2$).
This will be done using characterizations of $\varepsilon$-optimal feasible primal-dual solutions
to obtain bounds for the terms $\langle \hat \mu , g(\hat y , \bar x ) \rangle$ 
and $\displaystyle \max_{y \in Y} 
\langle \nabla_y L_{\bar x}(\hat y,\hat \lambda,\hat \mu),  \hat y - y \rangle$ (which are clearly null if $\hat y$ and $(\hat \lambda, \hat \mu)$ are optimal
primal-dual solutions). In particular, we will show that $\langle \hat \mu , g(\hat y , \bar x ) \rangle$
is between $-2\varepsilon$ and 0.  
To derive these bounds, we will assume that\\ 
\begin{itemize}
\item[(A0)] the gradient of objective function $f(\cdot,\bar x)$ (resp. 
of constraint function $g_i(\cdot,\bar x)$) is $L_0$ (resp. $L_i$)-co-coercive with $L_i>, i=0,\ldots,p$. \\
\end{itemize}
Recall that $F:\mbox{Dom}(F) \subseteq \mathbb{R}^m \rightarrow \mathbb{R}^m$ is $L$-co-coercive
on $\Omega \subseteq \mbox{Dom}(F)$ if 
$$
L \langle y - x , F(y) - F(x) \rangle \geq \|F(y)-F(x)\|^2,\;\forall x,y \in \Omega.
$$
\begin{proposition}\label{propcompcuts}
Let the assumptions of Proposition \ref{varprop1} hold and assume that 
$Y \times X \subset \mbox{dom}(g_i)$ for all $i=1,\ldots,p$.
Take $\bar x \in X$ and let $\mathcal{L}_{\bar x}$ be any lower bound 
on $\mathcal{Q}(\bar x)$. Let
$\mathcal{C}_1(x)=\theta_1 + \langle \beta_1 , x - \bar x \rangle $
and $\mathcal{C}_2(x)={\theta}_2 + \langle {\beta}_2 , x - \bar x \rangle $
be respectively the inexact cuts given by Propositions \ref{icutsec2} and \ref{varprop1}
taking $\varepsilon_D=\varepsilon_P=\varepsilon$.
Assume that $f$ and $g_i,i=1,\ldots,p$, satisfy (A0), that $Y$ is compact, and set  
$$
\mathcal{U}_{\bar x}= 
\frac{f(y_{\bar x} , \bar x  )  - \mathcal{L}_{\bar x} + \varepsilon }{\min (-g_{i}(y_{\bar x} ,  \bar x), i=1,\ldots,p )},
L = L_0 +  \mathcal{U}_{\bar x} \max_{i=1,\ldots,p} L_i.
$$
Then we have
$$
-2\varepsilon \leq \mathcal{C}_1( \bar x ) -  \mathcal{C}_2( \bar x ) 
\leq 2\varepsilon + 2 D_Y \sqrt{L \varepsilon},
$$
where $D_Y$ is the diameter of $Y$.
\end{proposition}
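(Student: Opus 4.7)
The plan is to write the difference $\mathcal{C}_1(\bar x) - \mathcal{C}_2(\bar x)$ explicitly in terms of two quantities that can be separately bounded: the approximate complementary-slackness gap $\langle \hat\mu, g(\hat y,\bar x)\rangle$ and the linearization residual $\eta(\varepsilon)$. Using the primal feasibility of $\hat y$ (so that $A\hat y + B\bar x = b$), one has $\mathcal{C}_1(\bar x) = f(\hat y,\bar x) - 2\varepsilon$ and $\mathcal{C}_2(\bar x) = f(\hat y,\bar x) + \langle \hat\mu, g(\hat y,\bar x)\rangle - \eta(\varepsilon)$, so
\begin{equation*}
\mathcal{C}_1(\bar x) - \mathcal{C}_2(\bar x) = -2\varepsilon - \langle \hat\mu, g(\hat y,\bar x)\rangle + \eta(\varepsilon).
\end{equation*}
The lower bound is then immediate: $\langle \hat\mu, g(\hat y,\bar x)\rangle \le 0$ since $\hat\mu\ge 0$ and $g(\hat y,\bar x)\le 0$, and $\eta(\varepsilon)\ge 0$ by taking $y=\hat y$ in the defining maximum.

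For the upper bound, I first bound $-\langle \hat\mu, g(\hat y,\bar x)\rangle$ by approximate weak/strong duality. Combining $f(\hat y,\bar x) \le \mathcal{Q}(\bar x) + \varepsilon$ with $\theta_{\bar x}(\hat\lambda,\hat\mu) \ge \mathcal{Q}(\bar x) - \varepsilon$ and the trivial $\theta_{\bar x}(\hat\lambda,\hat\mu) \le L_{\bar x}(\hat y,\hat\lambda,\hat\mu) = f(\hat y,\bar x) + \langle \hat\mu, g(\hat y,\bar x)\rangle$ yields $\langle \hat\mu,g(\hat y,\bar x)\rangle \ge -2\varepsilon$. This same chain also shows that $\hat y$ is $2\varepsilon$-optimal for $\min_{y\in Y} F(y)$ where $F(y):=L_{\bar x}(y,\hat\lambda,\hat\mu)$.

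Next, I control the size of $\hat\mu$ to get a Lipschitz bound on $\nabla F$. Using the Slater point $y_{\bar x}$, weak duality gives $\mathcal{L}_{\bar x} - \varepsilon \le \theta_{\bar x}(\hat\lambda,\hat\mu) \le f(y_{\bar x},\bar x) - \sum_i \hat\mu_i(-g_i(y_{\bar x},\bar x))$, which yields $\sum_i \hat\mu_i \le \mathcal{U}_{\bar x}$. By Baillon--Haddad, the $L_i$-co-coercivity assumption (A0) is equivalent to $L_i$-Lipschitz continuity of $\nabla_y f(\cdot,\bar x)$ and $\nabla_y g_i(\cdot,\bar x)$, so $\nabla F$ is Lipschitz with constant at most $L = L_0 + \mathcal{U}_{\bar x}\max_i L_i$.

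The main technical step is then to convert $2\varepsilon$-optimality of $\hat y$ into an $O(\sqrt{L\varepsilon})$ bound on $\eta(\varepsilon)$. Letting $y^*$ minimize $F$ over $Y$, the standard smoothness-plus-convexity inequality $\tfrac{1}{2L}\|\nabla F(\hat y)-\nabla F(y^*)\|^2 \le F(\hat y) - F(y^*) - \langle \nabla F(y^*),\hat y - y^*\rangle$, together with the optimality condition $\langle \nabla F(y^*),\hat y - y^*\rangle \ge 0$ and the $2\varepsilon$-optimality, gives $\|\nabla F(\hat y)-\nabla F(y^*)\| \le 2\sqrt{L\varepsilon}$. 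For any $y\in Y$, decomposing $\langle \nabla F(\hat y),\hat y - y\rangle = \langle \nabla F(\hat y)-\nabla F(y^*),\hat y - y\rangle + \langle \nabla F(y^*),\hat y - y^*\rangle + \langle \nabla F(y^*),y^* - y\rangle$, applying Cauchy--Schwarz and the diameter bound to the first term, using convexity to bound the second by $F(\hat y)-F(y^*)\le 2\varepsilon$, and using optimality of $y^*$ to discard the third (it is nonpositive), yields $\eta(\varepsilon)\le 2\varepsilon + 2D_Y\sqrt{L\varepsilon}$. Substituting into the expression for the difference gives $\mathcal{C}_1(\bar x)-\mathcal{C}_2(\bar x)\le -2\varepsilon + 2\varepsilon + 2\varepsilon + 2D_Y\sqrt{L\varepsilon} = 2\varepsilon + 2D_Y\sqrt{L\varepsilon}$, which is the claimed upper bound. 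The main obstacle is the last step, where getting the correct dependence on $\sqrt{\varepsilon}$ requires simultaneously exploiting smoothness, convexity, and the optimality condition at $y^*$, rather than just directly bounding $\eta(\varepsilon)$ via a Lipschitz-gradient estimate.
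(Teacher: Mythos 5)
Your proof is correct and reaches exactly the paper's bounds, but the key technical step is handled by a genuinely different argument. Both you and the paper start from the same decomposition $\mathcal{C}_1(\bar x)-\mathcal{C}_2(\bar x) = -2\varepsilon - \langle \hat\mu, g(\hat y,\bar x)\rangle + \eta(\varepsilon)$, obtain the lower bound from $\langle \hat\mu, g(\hat y,\bar x)\rangle \le 0$ and $\eta(\varepsilon)\ge 0$, derive $\langle\hat\mu,g(\hat y,\bar x)\rangle \ge -2\varepsilon$ from the $2\varepsilon$ duality gap, and bound $\hat\mu$ via the Slater point to get the constant $L$. The difference is in how $\eta(\varepsilon)\le 2\varepsilon + 2D_Y\sqrt{L\varepsilon}$ is extracted from the $\tilde\varepsilon$-optimality of $\hat y$ for $F:=L_{\bar x}(\cdot,\hat\lambda,\hat\mu)$ over $Y$: the paper phrases this as $0\in\partial_{\tilde\varepsilon}(F+\delta_Y)(\hat y)$ and invokes Lemma~3.2 of \cite{yunlongmonteiro} to produce a vector $v$ with $\|v\|\le\sqrt{2L\tilde\varepsilon}$ and $v\in\nabla F(\hat y)+\partial_{\tilde\varepsilon}\delta_Y(\hat y)$, then splits $\eta(\varepsilon)$ using the $\tilde\varepsilon$-normal cone; you instead introduce the exact minimizer $y^*$ of $F$ over $Y$, use the inequality $\tfrac{1}{2L}\|\nabla F(\hat y)-\nabla F(y^*)\|^2 \le F(\hat y)-F(y^*)-\langle\nabla F(y^*),\hat y-y^*\rangle$ to get $\|\nabla F(\hat y)-\nabla F(y^*)\|\le 2\sqrt{L\varepsilon}$, and then split $\eta(\varepsilon)$ around $y^*$ using convexity and first-order optimality. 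Your route is more elementary and self-contained (no external $\varepsilon$-subdifferential decomposition lemma), which is a genuine advantage; the one caveat is that the smoothness-plus-convexity inequality you rely on is a global property --- its standard proof evaluates $F$ at $\hat y-\tfrac1L(\nabla F(\hat y)-\nabla F(y^*))$, which may leave $Y$ --- so it requires reading (A0) as co-coercivity (equivalently, by Baillon--Haddad, Lipschitz continuity of the gradients) on all of $\mathbb{R}^m$ rather than merely on $Y$; a bound using co-coercivity restricted to $Y$ alone does not recover the $O(\sqrt{\varepsilon})$ rate. Since the paper itself applies co-coercivity of $L_{\bar x}(\cdot,\hat\lambda,\hat\mu)$ in the same global spirit when invoking the cited lemma, this is a shared hypothesis rather than a defect of your argument.
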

\begin{proof} Recall that 
$$
\begin{array}{l}
\mathcal{C}_1( \bar x ) = f(\hat y , \bar x) - 2\varepsilon,\\
\mathcal{C}_2( \bar x ) = f(\hat y , \bar x) + \langle \hat \mu , g(\hat y , \bar x ) \rangle 
- \displaystyle \max_{y \in Y} 
\langle \nabla_y L_{\bar x}(\hat y,\hat \lambda,\hat \mu),  \hat y - y \rangle,
\end{array}
$$
and that $(\hat y, \hat \lambda, \hat \mu)$ satisfy 
\begin{equation} \label{eq:approx-sol}
\hat y \in S(\bar x),\; \hat \mu \geq 0,\; f(\hat y,\bar x) \leq \mathcal{Q}(\bar x) + \varepsilon,\;
\theta_{\bar x}(\hat \lambda , \hat \mu) \geq \mathcal{Q}(\bar x)-\varepsilon,
\end{equation}
where  $S(x)$ is defined in \eqref{Sxdef} and $\theta_{\bar x}$ is the dual function given by 
\eqref{defthetaxdiff}.

By the subgradient inequality, if $L_x$ is the Lagrangian given in 
Proposition \ref{varprop1}, we get
\begin{equation}\label{ineqthetadiff1}
\begin{array}{lcl}
\theta_{\bar x}(\hat \lambda,\hat \mu) &= & \min_{y \in Y} L_{\bar x}(y,\hat \lambda,\hat \mu)
\ge L_{\bar x}(\hat y,\hat \lambda,\hat \mu)
+ \min_{y \in Y} \inner{\nabla_y L_{\bar x}(\hat y,\hat \lambda,\hat \mu)}{y - \hat y} \\
 &=& f(\hat y, \bar x)  + \inner{\hat \mu}{g(\hat y,\bar x)}
 + \min_{y \in Y} \inner{\nabla_y L_{\bar x}(\hat y,\hat \lambda,\hat \mu)}{y - \hat y} = \mathcal{C}_2( \bar x ).
 \end{array}
 \end{equation}
Therefore,
\begin{equation}\label{firstineqcomp}
  \begin{array}{lcl}
 \mathcal{C}_1( \bar x ) & = & f(\hat y , \bar x) - 2\varepsilon \\
 & \geq   & \theta_{\bar x}(\hat \lambda,\hat \mu) - 2\varepsilon \mbox{ by weak duality},\\
 & \stackrel{\eqref{ineqthetadiff1}}{\geq} & \mathcal{C}_2( \bar x ) -2\varepsilon. 
\end{array}
\end{equation} 
We next provide an upper bound for 
$\mathcal{C}_1(\bar x)-\mathcal{C}_2(\bar x)$. Indeed, \eqref{eq:approx-sol} implies that
\[
f(\hat y, \bar x)   \le \theta_{\bar x}(\hat \lambda, \hat \mu) + 2 \varepsilon = \min _{y \in Y}  \{ L_{\bar x}(y,\hat \lambda,\hat \mu)  : y \in Y \} + 2\varepsilon
\]
and hence that
\[
L_{\bar x}(\hat y,\hat \lambda,\hat \mu) = f(\hat y, \bar x) + \inner{\hat \mu}{ g(\hat y , \bar x)} \le  
\min _{y \in Y}  \{ L_{\bar x}(y,\hat \lambda,\hat \mu)  : y \in Y \} + \inner{\hat \mu}{ g(\hat y , \bar x)} +  2\varepsilon
\]
where the first equality is due to $\hat y \in S(\bar x)$.
The last inequality in turn is equivalent to  $\tilde \varepsilon := 2 \varepsilon +  \inner{\hat \mu}{ g(\hat y, \bar x)}$ satisfying
\begin{equation}\label{ineqepstilde}
2\varepsilon \geq
2 \varepsilon +  \inner{\hat \mu}{ g(\hat y, \bar x)}=
\tilde \varepsilon \ge 0, \quad 0 \in \partial_{\tilde \varepsilon} \left( L_{\bar x}(\cdot,\hat \lambda,\hat \mu)  + \delta_Y(\cdot) \right)(\hat y)
\end{equation}
%
%
where $\delta_Y(\cdot)$ is the indicator function of set $Y$ given by
$$
\delta_{Y}(y) = \left\{
\begin{array}{ll}
0  & \mbox{if }y \in Y,\\
+\infty & \mbox{otherwise.}
\end{array}
\right.
$$
It is easy to check that $\|\hat \mu\| \leq \mathcal{U}_{\bar x}$
(see for instance the proof of Proposition 2.3 in \cite{guigues2016isddp})
which easily implies that $L_{\bar x}(\cdot,\hat \lambda,\hat \mu)$ 
is $L$-co-coercive (for the interested reader, we provide in Lemma \ref{lemmasumcocoercive} in the appendix
the proof that
a sum of $L_i$-co-coercive  mappings $f_i$ is ($\sum_{i=1}^n L_i$)-co-coercive).
Combining this observation with \eqref{ineqepstilde}
and Lemma 3.2 in \cite{yunlongmonteiro}, we obtain that 
there exists $v$ satisfying:
\begin{equation}\label{satsfv}
v \in \nabla_y L_{\bar x}(\hat y,\hat \lambda,\hat \mu) + \partial_{\tilde \varepsilon} \delta_Y( \hat y ), \quad \|v\| \le \sqrt{2 L \tilde \varepsilon} \stackrel{\eqref{ineqepstilde}}{\leq}  2 \sqrt{L \varepsilon}.
\end{equation}
It is well known that set $\partial_{\tilde \varepsilon} \delta_Y( \hat y )$ is the 
$\tilde \varepsilon$-normal set to $Y$ at $\hat y$
given by 
$$
\partial_{\tilde \varepsilon} \delta_Y( \hat y ) = \{
z \in \mathbb{R}^m : \langle z , y - \hat y \rangle \leq \tilde \varepsilon \;\forall y \in Y\}
$$
and therefore $v$ which satisfies \eqref{satsfv} also satisfies
\begin{equation}\label{maxscalpos}
\langle \nabla_y L_{\bar x}(\hat y,\hat \lambda,\hat \mu) - v ,  \hat y - y \rangle \leq \tilde \varepsilon,\;
\forall y \in Y
\Leftrightarrow \max_{y \in Y} \langle  \nabla_y L_{\bar x}(\hat y,\hat \lambda,\hat \mu) - v ,  \hat y - y \rangle \leq \tilde \varepsilon.
\end{equation}
We then obtain the following upper bound for $\mathcal{C}_1(\bar x)-\mathcal{C}_2(\bar x)$:
$$
\begin{array}{lcl}
 \mathcal{C}_2(\bar x) &= &f(\hat y , \bar x) +
 \langle \hat \mu , g(\hat y , \bar x ) \rangle
 - \displaystyle \max_{y \in Y} 
\langle \nabla_y L_{\bar x}(\hat y,\hat \lambda,\hat \mu),  \hat y - y \rangle \\
&= &\mathcal{C}_1(\bar x) + 2 \varepsilon + \langle \hat \mu , g(\hat y , \bar x ) \rangle
 - \displaystyle \max_{y \in Y} 
\langle \nabla_y L_{\bar x}(\hat y,\hat \lambda,\hat \mu),  \hat y - y \rangle \\
& \stackrel{\eqref{ineqepstilde}}{\geq} &\mathcal{C}_1(\bar x) 
 - \displaystyle \max_{y \in Y} 
\langle \nabla_y L_{\bar x}(\hat y,\hat \lambda,\hat \mu),  \hat y - y \rangle \\
& \ge & \mathcal{C}_1(\bar x) 
 - \displaystyle \max_{y \in Y} 
\langle \nabla_y L_{\bar x}(\hat y,\hat \lambda,\hat \mu) - v ,  \hat y - y \rangle
-  \displaystyle \max_{y \in Y} 
\langle v ,  \hat y - y \rangle\\
& \stackrel{\eqref{maxscalpos}}{\geq } & \mathcal{C}_1(\bar x)  - \tilde \varepsilon
 - \|v\| \, D_Y \stackrel{\eqref{satsfv}}{\geq}
 \mathcal{C}_1(\bar x)  - 2 \varepsilon
 - 2 D_Y  \sqrt{L \varepsilon},
\end{array} 
 $$
 which achieves the proof of the proposition.
 \end{proof} 
 
The upper and lower bounds on $\mathcal{C}_1(\bar x) - \mathcal{C}_2(\bar x)$ given in Proposition \ref{propcompcuts} are continuous functions
of $\varepsilon$ which go to $0$ as $\varepsilon$ goes to $0$. Also these bounds are respectively
positive and negative for positive $\varepsilon$. This shows that they are both of good quality for small
values of $\varepsilon$ and this analysis does not ensure that one of these two is always better (i.e., has a
larger intercept at $\bar x$) than the other. 

The analysis above (the proof of Proposition \ref{propcompcuts}) is also interesting per-se since it offers ways of
characterizing $\varepsilon$-optimal primal-dual solutions and allows us to derive bounds on the two 
quantities $\langle \hat \mu , g(\hat y , \bar x ) \rangle$ and
$\displaystyle \max_{y \in Y} 
\langle \nabla_y L_{\bar x}(\hat y,\hat \lambda,\hat \mu),  \hat y - y \rangle$
which, by the first order optimality conditions, are null if $\hat y$ and $(\hat \lambda, \hat \mu)$
are respectively optimal primal and dual solutions.
More precisely, if  $\hat y$ (resp. $(\hat \lambda, \hat \mu)$)
is an $\varepsilon$-optimal  feasible primal (resp. dual) solution, then we
have shown that $-2\varepsilon \leq \langle \hat \mu , g(\hat y , \bar x ) \rangle \leq 0$
and $0 \leq \displaystyle \max_{y \in Y} 
\langle \nabla_y L_{\bar x}(\hat y,\hat \lambda,\hat \mu),  \hat y - y \rangle \leq 2D_Y \sqrt{L \varepsilon} + 2\varepsilon$.

\section{ISDDP algorithm for nondifferentiable problems}\label{sec:isddp}

The objective of this section is to introduce and study new variants of ISDDP
which use the inexact cuts built in the previous sections.

We consider multistage stochastic nonlinear optimization problems of the form
\begin{equation}\label{pbtosolve}
\begin{array}{l}
\displaystyle 
\min_{x_1 \in X_1( x_0 , \xi_1)} f_1(x_1,x_0,\xi_1 ) +
\mathbb{E}\left[ \min_{x_2 \in X_2( x_1 , \xi_2)} f_2(x_2,x_1,\xi_2) +
\mathbb{E}\left[ \ldots \right. \right.  \\
\hspace*{4.6cm} \left. \left. \ldots + \mathbb{E}\left[ \min_{x_T \in X_T( x_{T-1} , \xi_T)} f_T(x_{T},x_{T-1},\xi_T) \right] \right] \right],
\end{array}
\end{equation}
where $x_0$ is given,  $(\xi_t)_{t=2}^T$ is a stochastic process, $\xi_1$ is deterministic, and
$$ 
X_t( x_{t-1} , \xi_t)= \{x_t \in \mathbb{R}^n : \displaystyle A_{t} x_{t} + B_{t} x_{t-1} = b_t,
g_t(x_t, x_{t-1}, \xi_t) \leq 0, x_t \in \mathcal{X}_t \}.
$$

We make the following assumption on $(\xi_t)$:\\
\par (H) $(\xi_t)$ 
is interstage independent and
for $t=2,\ldots,T$, $\xi_t$ is a random vector taking values in $\mathbb{R}^K$ with a discrete distribution and
a finite support $\Theta_t=\{\xi_{t 1}, \ldots, \xi_{t N_t}\}$ with $p_{t i}=\mathbb{P}(\xi_t = \xi_{t i})>0,i=1,\ldots,N_t$, while $\xi_1$ is deterministic.\\

\par In the sequel, we will denote by $A_{t j}$, $B_{t j}$, and $b_{t j}$ the realizations of $A_t, B_t$,
and $b_t$ in $\xi_{t j}$.

For this problem, we can write Dynamic Programming equations: the first stage problem is 
\begin{equation}\label{firststodp}
\mathcal{Q}_1( x_0 ) = \left\{
\begin{array}{l}
\min_{x_1 \in \mathbb{R}^n} f_1(x_1, x_0, \xi_1)  + \mathcal{Q}_2 ( x_1 )\\
x_1 \in X_1( x_{0}, \xi_1 )\\
\end{array}
\right.
\end{equation}
for $x_0$ given and for $t=2,\ldots,T$, $\mathcal{Q}_t( x_{t-1} )= \mathbb{E}_{\xi_t}[ \mathfrak{Q}_t ( x_{t-1},  \xi_{t}  )  ]$ with
\begin{equation}\label{secondstodp} 
\mathfrak{Q}_t ( x_{t-1}, \xi_{t}  ) = 
\left\{ 
\begin{array}{l}
\min_{x_t \in \mathbb{R}^n}  f_t ( x_t , x_{t-1}, \xi_t ) + \mathcal{Q}_{t+1} ( x_t )\\
x_t \in X_t ( x_{t-1}, \xi_t ),
\end{array}
\right.
\end{equation}
with the convention that $\mathcal{Q}_{T+1}$ is null.

We set $\mathcal{X}_0=\{x_0\}$ and make the following assumptions (H1) on the problem data:\\
\par (H1): there exists $\varepsilon>0$ such that for $t=1,\ldots,T$,  
\begin{itemize}
\item[1)] $\mathcal{X}_{t}$ is a nonempty, compact, and convex set.
\item[2)] For every $j=1,\ldots,N_t$, the function
$f_t(\cdot, \cdot , \xi_{t j})$ is convex, proper, lower semicontinuous on $\mathcal{X}_t \small{\times} \mathcal{X}_{t-1}$ 
and for every $x_{t-1}$
$\mathcal{X}_{t-1}^{\varepsilon}$ we have 
$$
\mathcal{X}_t \subset \dom ( f_t(\cdot, x_{t-1} , \xi_{t j})).
$$
\item[3)] For every $j=1,\ldots,N_t$, each component $g_{t i}(\cdot, \cdot, \xi_{t  j}), i=1,\ldots,p$, of function $g_{t}(\cdot, \cdot, \xi_{t  j})$ is
convex, lower semicontinuous and finite on $\mathcal{X}_t \small{\times }{\mathcal{X}_{t-1}}$.
\item[4)] $X_1(x_0,\xi_1) \neq \emptyset$ and for every $t=2,\ldots,T$, 
for every $j=1,\ldots,N_t$, for every $x_{t-1} \in \mathcal{X}_{t-1}^{\varepsilon}$, the set 
$\mbox{ri}( \mathcal{X}_t) \cap X_t(x_{t-1},\xi_{t j})$ is nonempty.
\item[5)] for every $t \geq 2$, for every $j=1,\ldots,N_t$, there is 
$(x_{t j},x_{t-1 j})\in \mbox{ri}(\mathcal{X}_t) \small{\times} \mathcal{X}_{t-1}$
such that $g_t(x_{t j},x_{t-1 j},\xi_{t j})<0$.\\
\end{itemize}

We are now in a position to describe the ISDDP algorithm for nondifferentiable optimization problems
of form \eqref{pbtosolve}. The ISDDP algorithm given below combines SDDP with the inexact cuts
derived in Section \ref{sec:gencut}:\\
\rule{\linewidth}{1pt}
\par {\textbf{ISDDP algorithm.}}\\
\rule{\linewidth}{1pt}
\begin{itemize}
\item[Step 0)] {\textbf{Initialization.}} Let $\mathcal{Q}^{0}_t : \mathcal{X}_{t-1} \to \mathbb{R},\,t=2,\ldots,T+1$, 
be affine functions satisfying $\mathcal{Q}_t^0 \leq \mathcal{Q}_t$. Set $k=1$.
\item[Step 1)] {\textbf{Forward pass.}} Setting $x_0^{k} = x_0$, generate
a sample $(\tilde \xi_1^k,\tilde \xi_2^k,\ldots,\tilde \xi_T^k)$ from the distribution of $(\xi_1,\xi_2,\ldots,\xi_T)$ and for $t=1,2,\ldots,T$,
compute a $\delta_t^k$-optimal solution $x_t^k$  of
\begin{align}
\min \left\{ f_t(x_t, x_{t-1}^{k} , {\tilde \xi}_t^k ) + \mathcal{Q}^{k-1}_{t+1}( x_t ) : x_t \in X_t( x_{t-1}^{k} , \tilde \xi_t^k )    \right \}.
\end{align}
\item[Step 2)] {\textbf{Backward pass.}}
\par For $t=T, T-1, \ldots,2$,
\par \hspace*{0.4cm}For $j=1,\ldots,N_t$,
\par \hspace*{0.8cm}Compute an $\varepsilon_t^k$-optimal  solution $x_{t j}^k$ of
\begin{equation}\label{backisddp}
{\underline{\mathfrak{Q}}}_t^k (x_{t-1}^{k}, \xi_{t j}) = 
\left\{
\begin{array}{l}
\displaystyle \min_{x_t, z} \; f_t(x_t, z , \xi_{t j}) + \mathcal{Q}^{k}_{t+1}( x_t ) \\
A_{t j} x_t + B_{t j} z = b_{t j},\\
g_t(x_t, z , \xi_{t j}) \leq 0,\\
x_t \in \mathcal{X}_t,\\
z=x_{t-1}^k, \;\hspace*{4cm}[\lambda_{t j}^{k}]
\end{array}
\right.
\end{equation}
\par \hspace*{0.8cm}and an $\varepsilon_t^k$-optimal dual solution $\lambda_{t j}^{k}$ of the dual of problem 
\eqref{backisddp}
\par \hspace*{0.8cm}obtained dualizing constraints $z=x_{t-1}^k$.
\par \hspace*{0.4cm}End For
\par \hspace*{0.4cm}Compute 
$$
\begin{array}{l}
\beta_t^k = \sum_{j=1}^{N_t} p_{t j} \lambda_{t j}^k,\\
\theta_t^k = \sum_{j=1}^{N_t} p_{t j}\Big(   
f_t(x_{t j}^k , x_{t-1}^k , \xi_{t j} ) + \mathcal{Q}_{t+1}^k( x_{t j}^k ) - \langle \lambda_{t j}^k  , x_{t-1}^k \rangle 
\Big)
\end{array}
$$
\par \hspace*{0.4cm}and store the new cut 
\[
\mathcal{C}_t^k ( x_{t-1} ) := \theta_t^k -2 \varepsilon_t^k +   \langle \beta_t^k , x_{t-1}  \rangle 
\]
\par \hspace*{0.4cm}for $\mathcal{Q}_t$, making up the new approximation
$\mathcal{Q}_{t}^k = \max\{ \mathcal{Q}^{k-1}_{t}, \mathcal{C}_t^k  \}$.
\par End For
\item[Step 4)] Do $k \leftarrow k+1$ and go to Step 1).
\end{itemize}
\rule{\linewidth}{1pt}

\begin{remark} ISDDP algorithm given above applies both to differentiable and nondifferentiable
problems. In the differentiable case (when all functions $f_t(\cdot,\cdot,\xi_{t j})$
and $g_{t i}(\cdot,\cdot,\xi_{t j})$ are differentiable), compared to ISDDP introduced in \cite{guigues2016isddp},
the variant of ISDDP given above does not need to solve an additional optimization problem
to obtain the intercept of the cut. However, all subproblems solved in the forward and backward passes
have additional variables and constraints; the number of additional variables and constraints being
the size of $x_{t-1}$ for stage $t$.
\end{remark}

When objective functions $f_t(\cdot,\cdot,\xi_{t j})$
have saddle point representations (which is the case of all ``well structured'' convex functions), 
we can also derive another variant of ISDDP that combines SDDP with the inexact cuts given in
Section \ref{sec:icut2}.
For instance,
assuming to alleviate notation that $f_t$ is deterministic of the form $f_t(x_t, x_{t-1})$
with saddle point representation
\begin{equation}\label{fenchelt}
f_t(x_t, x_{t-1}) = x_{t-1}^T a_{t,1} + x_t^T a_{t,2} + \max_{w \in \mathcal{W}_t}  x_t^T {\bar A}_t w + x_{t-1}^T {\bar B}_t w - \Psi_t( w ),
\end{equation}
setting
$$
\begin{array}{l}
\Delta_{k+1} = \{\lambda=(\lambda_0,\lambda_1,\ldots,\lambda_k) \in \mathbb{R}^{k+1} : \lambda \geq 0, \sum_{i=0}^k \lambda_i =1  \},\\
{\bar \theta}_t^{0:k} = [\theta_t^0;\theta_t^1-2\varepsilon_t^1;\ldots;\theta_t^k-2\varepsilon_t^k],\;\beta_t^{0:k} = [\beta_t^0,\beta_t^1,\ldots,\beta_t^k],\\
\phi_{t k}( \lambda ) = - \lambda^T {\bar \theta}^{0:k},
\end{array}
$$
from the saddle point representation
$$
\mathcal{Q}_{t+1}^{k} ( x_t )  =   \max_{\lambda \in \Delta_{k+1}} \sum_{i=0}^k \lambda_i ( \theta_t^i -2\varepsilon_t^i + \langle  \beta_t^i , x_t \rangle     )
 =  \max_{\lambda_2 \in \Delta_{k+1}} x_t^T \beta_t^{0:k} \lambda_2 - \phi_{t,k} (\lambda_2),
$$
of $\mathcal{Q}_{t+1}^k$ where $\varepsilon_t^0=0$, we deduce the saddle point representation
\begin{equation}\label{fenchelbackward}
x_{t-1}^T a_{t,1} + x_t^T a_{t,2} + \max_{\lambda \in \Lambda} x_t^T \mathcal{A}_t^k \lambda + x_{t-1}^T \mathcal{B}_t \lambda - {\tilde \phi}_{t,k}(\lambda)
\end{equation}
of 
$f_t(x_t , x_{t-1}) + \mathcal{Q}_{t+1}^k ( x_t )$ where 
$$
\begin{array}{l}
\mathcal{A}_t^k =[{\bar A}_t, \beta_t^{0:k}],\;\mathcal{B}_t=[{\bar B}_t,0],\;{\tilde \phi}_{t,k} ( \lambda_1, \lambda_2 ) = \Psi_t (\lambda_1 ) + \phi_{t,k}( \lambda_2),\\ 
\Lambda = \{\lambda=(\lambda_1 , \lambda_2) : \lambda_1 \in \mathcal{W}_t, \lambda_2 \in \Delta_{k+1}   \}.
\end{array}
$$

\par In this situation, \eqref{fenchelbackward} provides a saddle point representation of 
the objective functions of problems \eqref{backisddp}  solved in the backward passes which allows us to build,
using Section \ref{sec:icut2}, inexact cuts of controlled accuracy
for value functions  ${\underline{\mathfrak{Q}}}_t^k(\cdot,\xi_{t j})$
and therefore for $\mathcal{Q}_t$.\\

\par We now study the convergence of ISDDP and start introducing more notation. Due to Assumption (H), the realizations of $(\xi_t)_{t=1}^T$ form a scenario tree of depth $T+1$
where the root node $n_0$ associated to a stage $0$ (with decision $x_0$ taken at that
node) has one child node $n_1$
associated to the first stage (with $\xi_1$ deterministic).
We denote by $\mathcal{N}$ the set of nodes and for a node $n$ of the tree, we define: 
\begin{itemize}
\item $C(n)$: the set of children nodes (the empty set for the leaves);
\item $x_n$: a decision taken at that node;
\item $p_n$: the transition probability from the parent node of $n$ to $n$;
\item $\xi_n$: the realization of process $(\xi_t)$ at node $n$:
for a node $n$ of stage $t$, this realization $\xi_n$ contains in particular the realizations
$b_n$ of $b_t$, $A_{n}$ of $A_{t}$, and $B_{n}$ of $B_{t}$.
\end{itemize}

Next, we define for iteration $k$ decisions $x_n^k$ for all node $n$ of the scenario tree
simulating the policy obtained in the end of iteration $k-1$ replacing 
cost-to-go function $\mathcal{Q}_t$ by 
$\mathcal{Q}_{t}^{k-1}$ for $t=2,\ldots,T+1$:\\
\rule{\linewidth}{1pt}
{\textbf{Simulation of ISDDP policy in the end of iteration $k-1$.}}\\
\rule{\linewidth}{1pt}
\hspace*{0.4cm}{\textbf{For }}$t=1,\ldots,T$,\\
\hspace*{0.8cm}{\textbf{For }}every node $n$ of stage $t-1$,\\
\hspace*{1.6cm}{\textbf{For }}every child node $m$ of node $n$, compute a $\delta_t^k$-optimal solution $x_m^k$ of
\begin{equation} \label{defxtkj}
{\underline{\mathfrak{Q}}}_t^{k-1}( x_n^k , \xi_m ) = \left\{
\begin{array}{l}
\displaystyle \inf_{x_m} \; f_t(x_m , x_n^k, \xi_m )  + \mathcal{Q}_{t+1}^{k-1}( x_m ) \\
A_{m} x_m + B_{m} x_n^k = b_{m},\\
g_t(x_m, x_n^k , \xi_{m}) \leq 0,\\
x_m \in \mathcal{X}_t,\\
\end{array}
\right.
\end{equation}
\hspace*{2.4cm}where $x_{n_0}^k = x_0$.\\
\hspace*{1.6cm}{\textbf{End For}}\\
\hspace*{0.8cm}{\textbf{End For}}\\
\hspace*{0.5cm}{\textbf{End For}}\\
\rule{\linewidth}{1pt}\\ 

We will assume that the sampling procedure in ISDDP satisfies the following property:\\

\par (H2) The samples in the backward passes are independent: $(\tilde \xi_2^k, \ldots, \tilde \xi_T^k)$ is a realization of
$\xi^k=(\xi_2^k, \ldots, \xi_T^k) \sim (\xi_2, \ldots,\xi_T)$ 
and $\xi^1, \xi^2,\ldots,$ are independent.\\

As said in the introduction, a useful tool for the convergence analysis of SDDP and ISDDP is Lemma 5.2 in \cite{lecphilgirar12} for vanishing errors
and Lemma 4.1 in \cite{guigues2016isddp} for bounded errors. 
We provide different proofs of these lemmas with slightly different assumptions, one of them
being stronger (the continuity of $f$ [which is satisfied when the lemmas are
applied to study the convergence of ISDDP]) and two being weaker.
More precisely, in these lemmas we do not assume $f^n\leq f$
and take equicontinuous sequences $f^n$ instead of sequences of Lipschitz continuous functions.
If we assumed $f^n \leq f$, the proof  would be a little shorter, because boundedness
of $\set{f^n}$ would be immediate. From these assumptions, we also derive a stronger conclusion, used
in the convergence analysis.
\begin{lemma} \label{technicallemma}
  Let $(X,d)$ be a compact metric space. If $\set{x_n}_{n\in\N}$ is a sequence in $X$,
  $\set{f^n}_{n\in\N}$ is an equicontinuous
  sequence of real functions on $X$,
  $f^1\leq f^2\leq f^3\leq\dots$, and
  $f$ is a continuous
  real function on $X$ then the
  following conditions are equivalent:
  \begin{itemize}
  \item[(a)] $\lim_{m,n\to\infty}f^m(x_n)-f(x_n)=0$.
  \item[(b)] $\lim_{n\to\infty}f^n(x_n)-f(x_n)=0$.
  \end{itemize}
 Morever, if (a) or (b) holds then $f^n$ converges uniformly to a continuous function 
 which coincides with $f$ on the set 
$$
    Y_*=\Set{y\in X\;:\;y=\lim_{j\to\infty}x_{n_j}
    \text{ for some subsequence }\set{x_{n_j}}_{j\in\N}}.
$$  
\end{lemma}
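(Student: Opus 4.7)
The plan is to prove (a) $\Leftrightarrow$ (b) together with the uniform convergence conclusion in one sweep. The forward direction (a) $\Rightarrow$ (b) is immediate by specializing $m=n$, so I would concentrate on the implication (b) $\Rightarrow$ (a) and the ``moreover'' clause simultaneously, under hypothesis (b). Since $(X,d)$ is compact and $\{f^n\}$ is equicontinuous, I will fix a common non-decreasing modulus of continuity $\omega$ with $\omega(r)\to 0$ as $r\to 0^+$.

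First I would use (b) to upgrade pointwise information into a uniform bound on $\{f^n\}$. The continuous function $f$ is bounded on the compact set $X$, so $\{f(x_n)\}$ is bounded; hypothesis (b) then forces $\{f^n(x_n)\}$ to be bounded by some $M$. For arbitrary $x\in X$ and any index $n$,
\[ |f^n(x)| \leq |f^n(x_n)| + |f^n(x) - f^n(x_n)| \leq M + \omega(\mathrm{diam}(X)), \]
so $\{f^n\}$ is uniformly bounded on $X$. Combined with monotonicity, this yields a finite pointwise limit $f^\infty(x):=\lim_n f^n(x)$, which inherits the modulus $\omega$ and is therefore continuous on $X$. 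Dini's theorem (monotone pointwise convergence of continuous functions to a continuous limit on a compact set) then delivers uniform convergence $f^n\to f^\infty$ on $X$.

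Next I would check that $f^\infty = f$ on $Y_*$. For $y\in Y_*$ with $x_{n_j}\to y$, the triangle inequality
\[ |f^\infty(y) - f(y)| \leq \|f^{n_j}-f^\infty\|_\infty + \omega(d(x_{n_j},y)) + |f^{n_j}(x_{n_j})-f(x_{n_j})| + |f(x_{n_j})-f(y)| \]
vanishes as $j\to\infty$ by uniform convergence, the modulus of continuity, hypothesis (b), and continuity of $f$, respectively.

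Finally, for (b) $\Rightarrow$ (a) I would first argue that $f^\infty(x_n) - f(x_n)\to 0$: any subsequence has, by compactness of $X$, a sub-subsequence $x_{n_{j_k}}\to y$ with $y\in Y_*$, and continuity of $f^\infty - f$ together with the identity $f^\infty(y)=f(y)$ forces $f^\infty(x_{n_{j_k}})-f(x_{n_{j_k}})\to 0$. The estimate
\[ |f^m(x_n) - f(x_n)| \leq \|f^m-f^\infty\|_\infty + |f^\infty(x_n)-f(x_n)| \]
then tends to $0$ as $m,n\to\infty$, proving (a). The main obstacle is the uniform bound established in the second paragraph: dropping the standard assumption $f^n\leq f$ removes the obvious dominating function, and one must instead combine (b) with equicontinuity and the specific sequence $\{x_n\}$ to salvage boundedness, which is precisely what lets Dini's theorem fire and drives the rest of the argument.
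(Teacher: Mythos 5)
Your proposal follows essentially the same route as the paper's proof: use (b) to get a uniform bound on $\{f^n\}$, pass to the monotone pointwise limit $f^\infty$, invoke equicontinuity and Dini's theorem for continuity of the limit and uniformity of the convergence, identify $f^\infty$ with $f$ on $Y_*$ by the three-term triangle inequality, and then deduce (a). The only structural difference is at the end: you obtain (a) from the subsequence argument showing $f^\infty(x_n)-f(x_n)\to 0$ and then splitting off $\|f^m-f^\infty\|_\infty$, whereas the paper runs a quantitative $3\varepsilon$ estimate using that $d(x_n,Y_*)<\delta$ for large $n$; both work, and yours is arguably a bit cleaner.

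One step deserves a caveat, and it is precisely the step the paper itself glosses over: the bound $|f^n(x)|\le |f^n(x_n)|+\omega(\mathrm{diam}(X))$ is only meaningful if the common modulus $\omega$ is finite at the scale of the diameter, which equicontinuity alone does not provide --- it only controls oscillations over small distances. To pass from boundedness along $\{x_n\}$ to uniform boundedness one must chain through a finite $\delta$-net, which works when $X$ is connected (or under a substitute hypothesis such as $f^n\le f$ or a uniform Lipschitz constant, both available in the paper's application where $X=\mathcal{X}_{t-1}$ is convex and compact). For a disconnected $X$ the step, and indeed the ``moreover'' clause of the lemma itself, can fail: take $X=\{0,1\}$ discrete, $x_n\equiv 0$, $f^n(0)=0$, $f^n(1)=n$, $f\equiv 0$; then (a) and (b) hold but $\{f^n\}$ has no uniform (or even pointwise) limit. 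So your argument is exactly as correct as the paper's; to make it airtight you would either add connectedness (or one of the substitutes above) or spell out the chaining through a $\delta$-net.
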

\begin{proof} See the Appendix. 
\end{proof}

The proof of the  previous lemma can be adapted to prove Lemma \ref{limsuptechlemma} which will be used in the convergence
analysis of ISDDP with bounded errors.

\begin{lemma}\label{limsuptechlemma} Let $(X,d)$ be a compact metric space, let 
$f: X \rightarrow \mathbb{R}$ be continuous and suppose that the sequence of equicontinuous functions 
$f^k, k \in \mathbb{N}$ satisfies $f^{k}(x) \leq f^{k+1}(x)\;\mbox{for all }x \in X,\;k \in \mathbb{N}$.
Let $(x^k)_{k \in \mathbb{N}}$ be a sequence in $X$ and  assume that 
\begin{equation}\label{limsuphyp}
\varlimsup_{k \rightarrow +\infty} f( x^k ) -f^k( x^k ) \leq S 
\end{equation}
for some finite $S \geq 0$.
Then
\begin{equation}\label{limsuphypkm1}
\varlimsup_{k \rightarrow +\infty} f( x^k ) -f^{k-1}( x^k ) \leq S. 
\end{equation}
Moreover, $f^n$ converges uniformly to a continuous function $g$ 
such that $|f(y)-g(y)|\leq S$ for every $y$ in the set
$$
    Y_*=\Set{y\in X\;:\;y=\lim_{j\to\infty}x_{n_j}
    \text{ for some subsequence }\set{x_{n_j}}_{j\in\N}}.
$$  
\end{lemma}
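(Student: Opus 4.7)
My plan mirrors the appendix proof of Lemma~\ref{technicallemma}, adapted to the bounded-error setting. First, since $f^{k}\leq f^{k+1}$, the pointwise limit $g(x):=\lim_{k\to\infty}f^{k}(x)\in(-\infty,+\infty]$ exists for every $x\in X$. The main step is to establish that $g$ is finite on $X$ and that $f^{k}\to g$ uniformly. This combines three ingredients: the equicontinuity of $(f^{k})$, giving a common modulus of continuity $\omega$; compactness of $X$, allowing a cover by finitely many balls of small $\omega$-oscillation; and the hypothesis \eqref{limsuphyp}, which for every $\epsilon>0$ eventually yields $f^{k}(x^{k})\geq f(x^{k})-S-\epsilon$ and, through a symmetric argument tracking oscillations via $\omega$ around the iterates $x^{k}$, propagates a two-sided control of $f^{k}$ to every point of $X$. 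Equicontinuity then passes to the pointwise limit so $g$ is continuous, and Dini's theorem upgrades the monotone convergence $f^{k}\uparrow g$ to uniform convergence on the compact set $X$.

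Next I would establish $|f(y)-g(y)|\leq S$ for $y\in Y_{*}$. If $x^{k_{j}}\to y$ along a subsequence, uniform convergence of $f^{k}$ to $g$ and continuity of $g$ give $f^{k_{j}}(x^{k_{j}})\to g(y)$, and continuity of $f$ gives $f(x^{k_{j}})\to f(y)$; passing to the limsup on \eqref{limsuphyp} along this subsequence then produces $f(y)-g(y)\leq S$. The reverse inequality $g(y)-f(y)\leq S$ is obtained from the upper bound on $f^{k}$ derived in the first step, by letting $k\to\infty$ and then $j\to\infty$.

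Finally, for the auxiliary assertion \eqref{limsuphypkm1}, I decompose
$$
f(x^{k})-f^{k-1}(x^{k})=[f(x^{k})-f^{k}(x^{k})]+[f^{k}(x^{k})-f^{k-1}(x^{k})].
$$
The first bracket has limsup at most $S$ by hypothesis. The second bracket is nonnegative by monotonicity and bounded from above by $\sup_{x\in X}|f^{k}(x)-f^{k-1}(x)|$, which tends to zero as a direct consequence of the uniform convergence of $f^{k}$ to $g$ established above. Taking the limsup of the sum yields \eqref{limsuphypkm1}.

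The hard part will be the first step: without an a priori inequality $f^{k}\leq f$, showing that the monotone pointwise limit $g$ stays finite throughout $X$ (and not merely on the trajectory of $(x^{k})$) requires carefully combining the one-sided information supplied by \eqref{limsuphyp} at the iterates $x^{k}$ with the equicontinuity modulus $\omega$ and the compactness of $X$, so that no blow-up of $f^{k}$ can occur anywhere in $X$.
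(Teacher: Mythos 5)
Your architecture is the same as the paper's: get uniform convergence of the monotone equicontinuous sequence $(f^k)$ to a continuous limit $g$ (Dini), then bound $|f-g|$ on $Y_*$ by a triangle inequality along convergent subsequences, then deduce \eqref{limsuphypkm1}. Your last step is in fact cleaner than the paper's: writing $f(x^k)-f^{k-1}(x^k)=[f(x^k)-f^k(x^k)]+[f^k(x^k)-f^{k-1}(x^k)]$ and killing the second bracket with $0\le f^k-f^{k-1}\le \|f^k-g\|_\infty+\|f^{k-1}-g\|_\infty\to 0$ avoids the paper's $3\varepsilon$ re-localization of $x^n$ near $Y_*$.

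The genuine gap is in your first step, and while you correctly sense where the difficulty lies, the fix you sketch cannot work. Hypothesis \eqref{limsuphyp} yields only the one-sided bound $f^k(x^k)\ge f(x^k)-S-\epsilon$ for large $k$, i.e.\ a \emph{lower} bound on $f^k$ along the trajectory, and monotonicity likewise produces only lower bounds. There is no ``symmetric argument'' that manufactures an upper bound on $f^k$ out of equicontinuity and compactness: the constant functions $f^k\equiv k$ with $f\equiv 0$ satisfy every stated hypothesis (equicontinuous, nondecreasing, $f(x^k)-f^k(x^k)=-k\le S$), yet $f^k\uparrow+\infty$ and no finite limit $g$ exists. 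Consequently the finiteness of $g$, the uniform convergence, and the reverse inequality $g(y)-f(y)\le S$ (which you also derive from this phantom upper bound) all need an extra ingredient. To be fair, the paper's own proof has the same blind spot: it asserts that $f^n(x_n)-f(x_n)$ is ``bounded from above,'' which is precisely the direction \eqref{limsuphyp} does not supply. The lemma is used, and is correct, under the additional assumption $f^k\le f$ (true in the ISDDP application, where the $f^k$ are valid lower bounds on the Bellman function $f$, and acknowledged in the paper's discussion preceding Lemma \ref{technicallemma}); with it, $g\le f$ is immediate, $g$ is finite, $g(y)-f(y)\le 0\le S$ is trivial, and the remainder of your argument goes through verbatim. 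You should either invoke that hypothesis explicitly or assume a uniform upper bound on the $f^k$, rather than rely on the claimed two-sided control.
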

\begin{proof} See the Appendix. 
\end{proof}

We are now in a position to state our first convergence theorem for ISDDP.

\begin{theorem}[Convergence of ISDDP with bounded errors] \label{convisddplp}
Consider the sequences of decisions $(x_n^k)_{n \in \mathcal{N}}$ and of functions $(\mathcal{Q}_t^k)$ generated in the simulation of ISDDP.
Assume that (H), (H1), and (H2) hold, and that 
errors $\varepsilon_t^k$ and $\delta_t^k$ are bounded: $0 \leq \varepsilon_t^k \leq {\bar \varepsilon}$,
$0 \leq \delta_t^k \leq {\bar \delta}$ for finite ${\bar \delta}, {\bar \varepsilon}$. Then the following holds:
\begin{itemize}
\item[(i)] for $t=2,\ldots,T+1$, for all node $n$ of stage $t-1$,  almost surely
\begin{equation}\label{lowerbounds}
0 \leq \varliminf_{k \rightarrow +\infty} \mathcal{Q}_t( x_{n}^k ) - \mathcal{Q}_t^k( x_{n}^k ) \leq  
\varlimsup_{k \rightarrow +\infty} \mathcal{Q}_t( x_{n}^k ) - \mathcal{Q}_t^k( x_{n}^k ) \leq ({\bar \delta}  +  2{\bar{\varepsilon}})(T-t+1);
\end{equation}
\item[(ii)] for every $t=2,\ldots,T$, for all node $n$ of stage $t-1$,
the limit superior and limit inferior of the sequence of upper bounds  $\Big( \displaystyle  \sum_{m \in C(n)} p_m (  f_t( x_m^k, x_n^k, \xi_m )   + \mathcal{Q}_{t+1}( x_m^k ))  \Big)_k$ satisfy almost surely
{\small{
\begin{equation}\label{uppbounds}
\begin{array}{l}
0 \leq 
\varliminf_{k \rightarrow +\infty} \displaystyle \sum_{m \in C(n)} p_m \Big[ f_t( x_m^k, x_n^k, \xi_m )    + \mathcal{Q}_{t+1}( x_m^k ) \Big] - \mathcal{Q}_t( x_n^k ),  \\
\varlimsup_{k \rightarrow +\infty} \displaystyle  \sum_{m \in C(n)} p_m \Big[ f_t( x_m^k, x_n^k, \xi_m )    + \mathcal{Q}_{t+1}( x_m^k ) \Big] - \mathcal{Q}_t( x_n^k ) \leq ({\bar \delta} + 2{\bar{\varepsilon}})(T-t+1);
\end{array}
\end{equation}
}}
\item[(iii)]
the limit superior and limit inferior of the sequence   ${\underline{\mathfrak{Q}}}_1^{k-1}( x_{0} , \xi_1 )$ of lower bounds on the optimal value 
$\mathcal{Q}_1( x_0 )$ of \eqref{pbtosolve} satisfy almost surely
\begin{equation}\label{lbounds}
\mathcal{Q}_1( x_{0} )- {\bar{\delta}} T   - 2{\bar{\varepsilon}} (T-1) \leq \varliminf_{k \rightarrow +\infty} {\underline{\mathfrak{Q}}}_1^{k-1}( x_{0} , \xi_1 ) \leq  
\varlimsup_{k \rightarrow +\infty} {\underline{\mathfrak{Q}}}_1^{k-1}( x_{0} , \xi_1 ) \leq \mathcal{Q}_1( x_{0} );
 \end{equation}
\item[(iv)] for $t=2,\ldots,T$, almost surely the sequence of functions 
$(\mathcal{Q}_t^k)_k$ converges uniformly to a continuous function
$\mathcal{Q}_t^*$ which is at most at distance $({\bar \delta} + 2{\bar{\varepsilon}})(T-t+1)$  
from $\mathcal{Q}_t$
on every accumulation point ${\bar x}_n$ of the sequences $(x_n^k)_k$
for every node $n$ of stage $t-1$. 
\end{itemize}
\end{theorem}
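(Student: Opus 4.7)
The plan is to prove (i) and (iv) simultaneously by backward induction on $t$ (with base case $t = T+1$ trivial since $\mathcal{Q}_{T+1} \equiv \mathcal{Q}_{T+1}^k \equiv 0$), and then to derive (ii) and (iii) as essentially direct consequences. At each inductive step I would apply Lemma \ref{limsuptechlemma}, for each fixed node $n$ of stage $t-1$, with $X = \mathcal{X}_{t-1}$, $f = \mathcal{Q}_t$, $f^k = \mathcal{Q}_t^k$, $x^k = x_n^k$, and tolerance $S_t = ({\bar \delta} + 2{\bar \varepsilon})(T - t + 1)$; its conclusions then deliver both the limsup bound in (i) and the uniform convergence together with the gap bound on accumulation points claimed in (iv).

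The standing hypotheses of the lemma are mostly routine to verify under (H1): $\mathcal{X}_{t-1}$ is compact; $\mathcal{Q}_t$ is continuous on $\mathcal{X}_{t-1}$ by standard value-function arguments (using compactness of $\mathcal{X}_t$, Slater, and lower semicontinuity); and $\mathcal{Q}_t^k \le \mathcal{Q}_t^{k+1}$ holds by construction. Equi-Lipschitzness of $\{\mathcal{Q}_t^k\}_k$ on $\mathcal{X}_{t-1}$ reduces to a uniform bound on the cut slopes $\beta_t^{k'}$, which in turn reduces to a uniform bound on the dual multipliers $\lambda_{tj}^{k'}$ of problems \eqref{backisddp}; this is a standard consequence of the Slater condition in (H1) together with uniform boundedness of the primal optimal values on the compact set $\mathcal{X}_{t-1}$.

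The heart of the proof is verifying the limsup hypothesis $\varlimsup_k \mathcal{Q}_t(x_n^k) - \mathcal{Q}_t^k(x_n^k) \le S_t$. Setting $\mathcal{K}_n = \{k : \text{the iteration-}k \text{ sample path passes through } n\}$, assumptions (H) and (H2) guarantee that $\mathcal{K}_n$ is almost surely infinite. For $k \in \mathcal{K}_n$ the cut $\mathcal{C}_t^k$ is built at $x_n^k = x_{t-1}^k$, and applying Proposition \ref{icutsec2} scenario-by-scenario (its Slater hypothesis is supplied by (H1)) with $\varepsilon_P = \varepsilon_D = \varepsilon_t^k$ and averaging against the $p_{tj}$ gives $\mathcal{C}_t^k(x_n^k) = \sum_j p_{tj}\bigl[f_t(x_{tj}^k, x_n^k, \xi_{tj}) + \mathcal{Q}_{t+1}^k(x_{tj}^k)\bigr] - 2\varepsilon_t^k$. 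Using feasibility of $x_{tj}^k$ in the true DP subproblem $\mathfrak{Q}_t(x_n^k, \xi_{tj})$ together with $\mathcal{Q}_{t+1}^k \le \mathcal{Q}_{t+1}$ then yields the key telescoping estimate
\[
0 \le \mathcal{Q}_t(x_n^k) - \mathcal{Q}_t^k(x_n^k) \le \mathcal{Q}_t(x_n^k) - \mathcal{C}_t^k(x_n^k) \le 2\varepsilon_t^k + \sum_{j=1}^{N_t} p_{tj}\bigl[\mathcal{Q}_{t+1}(x_{tj}^k) - \mathcal{Q}_{t+1}^k(x_{tj}^k)\bigr].
\]
The inductive hypothesis at stage $t+1$ bounds the limsup of the sum by $S_{t+1} = ({\bar \delta} + 2{\bar \varepsilon})(T-t)$, and adding $2{\bar \varepsilon}$ recovers $S_t$. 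To propagate from $k \in \mathcal{K}_n$ to the full sequence I would invoke monotonicity of $\mathcal{Q}_t^k$ together with the equi-Lipschitzness already established: writing $\mathcal{Q}_t^k(x_n^k) \ge \mathcal{C}_t^{k_n(k)}(x_n^{k_n(k)}) + \langle \beta_t^{k_n(k)}, x_n^k - x_n^{k_n(k)}\rangle$ with $k_n(k) = \max\{k' \in \mathcal{K}_n : k' \le k\}$, the almost-sure positive density of $\mathcal{K}_n$ and uniform boundedness of $\beta_t^{k_n(k)}$ propagate the limsup bound.

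The main technical obstacle is that the backward-pass iterates $x_{tj}^k$ are not among the simulation trial points $x_m^k$ for which (iv) at stage $t+1$ directly provides a gap bound. I would resolve this by applying Lemma \ref{limsuptechlemma} at stage $t+1$ to an enlarged sequence containing both the simulation points $\{x_m^k\}_m$ and the backward-pass points $\{x_{tj}^k\}_j$, all lying in the compact set $\mathcal{X}_t$; the telescoping estimate above is insensitive to whether the point in question is a simulation or backward-pass iterate. Finally, (ii) is proved by the analogous telescoping argument in which the $\delta_t^k$-optimality of the forward-pass iterates replaces the backward-pass cut: the chain $f_t(x_m^k, x_n^k, \xi_m) + \mathcal{Q}_{t+1}^{k-1}(x_m^k) \le {\underline{\mathfrak{Q}}}_t^{k-1}(x_n^k, \xi_m) + \delta_t^k \le \mathfrak{Q}_t(x_n^k, \xi_m) + \delta_t^k$ combined with (iv) at stage $t+1$ at the genuine trial points $x_m^k$ delivers the bound. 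Assertion (iii) is the stage-$1$ specialization of the same idea: ${\underline{\mathfrak{Q}}}_1^{k-1}(x_0,\xi_1) \le \mathcal{Q}_1(x_0)$ is immediate from $\mathcal{Q}_2^{k-1} \le \mathcal{Q}_2$, and the matching lower bound $\mathcal{Q}_1(x_0) - {\bar \delta} T - 2{\bar \varepsilon}(T-1)$ follows from $\delta_1^k$-optimality of $x_1^k$ combined with the stage-$2$ case of (iv).
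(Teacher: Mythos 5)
There is a genuine gap at the heart of your induction. Your telescoping estimate
\[
\mathcal{Q}_t(x_n^k)-\mathcal{C}_t^k(x_n^k)\;\le\; 2\varepsilon_t^k+\sum_{j} p_{tj}\bigl[\mathcal{Q}_{t+1}(x_{tj}^k)-\mathcal{Q}_{t+1}^k(x_{tj}^k)\bigr]
\]
is algebraically correct, but it leaves you needing to control the gap $\mathcal{Q}_{t+1}-\mathcal{Q}_{t+1}^k$ at the \emph{backward-pass} iterates $x_{tj}^k$, and nothing in the induction supplies that. The cuts for $\mathcal{Q}_{t+1}$ are generated only at the forward/simulation trial points of stage $t$, so $\mathcal{Q}_{t+1}^k$ is tight (up to the accumulated error) only near those points; at the backward iterates $x_{tj}^k$, which are near-minimizers of a different problem and need not approach any forward trial point, the gap can stay large for all $k$. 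Your proposed fix --- feeding an ``enlarged sequence'' containing both families of points into Lemma \ref{limsuptechlemma} --- does not work, because hypothesis \eqref{limsuphyp} of that lemma must be \emph{verified} for every point of the sequence you feed in: the lemma converts a limsup bound you already possess into uniform convergence; it cannot manufacture the limsup bound at the backward points. A further symptom is that your telescoping, if it closed, would give the constant $2\bar\varepsilon(T-t+1)$ with no $\bar\delta$, whereas the theorem's constant is $(\bar\delta+2\bar\varepsilon)(T-t+1)$; the $\bar\delta$ has to enter somewhere.

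The paper closes the induction by a different chain of inequalities that never evaluates $\mathcal{Q}_{t+1}$ at a backward point: it bounds $\mathcal{C}_{tm}^k(x_n^k)\ge {\underline{\mathfrak{Q}}}_t^k(x_n^k,\xi_m)-2\varepsilon_t^k \ge {\underline{\mathfrak{Q}}}_t^{k-1}(x_n^k,\xi_m)-2\varepsilon_t^k$ (monotonicity of the approximate value functions) and then invokes the $\delta_t^k$-optimality of the \emph{simulation} point $x_m^k$ for ${\underline{\mathfrak{Q}}}_t^{k-1}(x_n^k,\xi_m)$, obtaining the lower bound $f_t(x_m^k,x_n^k,\xi_m)+\mathcal{Q}_{t+1}^{k-1}(x_m^k)-\delta_t^k-2\varepsilon_t^k$. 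The resulting telescoping involves only the simulation points, to which the induction hypothesis applies (Lemma \ref{limsuptechlemma} being used precisely to pass from index $k$ to $k-1$), and this substitution is exactly where the extra $\bar\delta$ per stage comes from. Separately, your propagation from $k\in\mathcal{S}_n$ to all $k$ via $\langle\beta_t^{k_n(k)},x_n^k-x_n^{k_n(k)}\rangle$ presumes $x_n^k-x_n^{k_n(k)}\to 0$, which positive density of $\mathcal{S}_n$ does not give; the paper instead argues by contradiction using the independence of the samples in (H2).
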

\begin{proof} (i) We show \eqref{lowerbounds} for $t=2,\ldots,T+1$, and all node $n$ of stage $t-1$ by backward induction on $t$. The relation
holds for $t=T+1$. Now assume that it holds for $t+1$ for some $t \in \{2,\ldots,T\}$. Let us show that it holds for $t$.
Take a node $n$ of stage $t-1$.
Let $\mathcal{S}_n$ be the iterations where the sampled scenario passes through node $n$
and take an iteration $k \in \mathcal{S}_n$.
It was shown in Lemma 5.2 in \cite{guigues2016isddp} that for the classes of 
problems we consider, Assumptions (H1)-3),5) imply that almost surely for every $j, k$, there exists 
$x_t$ satisfying
$$
x_t \in \mbox{ri}(\mathcal{X}_t),
A_{t j}x_t + B_{t j}x_{t-1}^k = b_{t j} \mbox { and }g_t(x_t,x_{t-1}^k,\xi_{t j})<0.
$$
Recalling that $\mathcal{X}_t \times \mathcal{X}_{t-1} \subset \mbox{dom}(g_{t i})$ for all $i$, we can reproduce the
reasoning used just after Proposition \ref{varprop1} in Section \ref{sec:comparison} to deduce that 
for every $j,t$, there exists 
\begin{equation}\label{slaterbackstk}
(x_t, z) \in \mbox{ri}(S_{t j})
\end{equation}
where 
$$
S_{t j}=\{(x_t,z):A_{t j} x_t + B_{t j} z = b_{t j},g_t(x_t, z , \xi_{t j}) \leq 0,x_t \in \mathcal{X}_t\}.
$$
Condition \eqref{slaterbackstk} is exactly Slater condition \eqref{slater} (from Proposition \ref{icutsec2})
written for problem \eqref{backisddp} solved in the backward pass of iteration $k$ for scenario $j$. 
Therefore, we can apply Proposition \ref{icutsec2} to value function 
${\underline{\mathfrak{Q}}}_t^k (\cdot, \xi_{t j})$ to obtain 
a $2\varepsilon_t^k$-inexact cut for this function for stage $t$ and iteration $k$ of ISDDP.
More precisely, fix $j\in \{1,\ldots,N_t\}$ and take $m$
such that $\xi_{t j}=\xi_m$.
Recalling that $\lambda_m^k$ is defined in \eqref{defxtkj} and setting
$$
\mathcal{C}_{t m}^k (  x_n ) =
f_t(x_m^k , x_n^k  ,  \xi_m   ) + \mathcal{Q}_{t+1}^k ( x_n^k ) -2\varepsilon_t^k + \langle \lambda_m^k, x_n - x_n^k  \rangle,
$$
using Proposition \ref{icutsec2}, we get for all $x_n \in \mathcal{X}_{t-1}$ and $k \in \mathcal{S}_n$:
\begin{equation}\label{cutisddp1}
\mathcal{C}_{t m}^k (  x_n ) \leq {\underline{\mathfrak{Q}}}_t^k(  x_n ,   \xi_m  ) 
\end{equation}
and 
\begin{equation}\label{cutisddp2}
{\underline{\mathfrak{Q}}}_t^k(  x_n^k ,   \xi_m  ) - \mathcal{C}_{t m}^k (  x_n^k ) \leq 2 \varepsilon_t^k.  
\end{equation}
This implies that $\mathcal{Q}_t^k$ is indeed a valid cut for $\mathcal{Q}_t$: for $x_n \in \mathcal{X}_{t-1}$
and $k \in \mathcal{S}_n$, we have
\begin{equation}\label{validcut}
\begin{array}{lcl}
\mathcal{Q}_t( x_ n) 
=\sum_{m \in C(n)} p_m \mathfrak{Q}_t(x_n , \xi_m   )
& \geq &  \sum_{m \in C(n)} p_m {\underline{\mathfrak{Q}}}_t^k(x_n , \xi_m   ) \\
& \stackrel{\eqref{cutisddp1}}{\geq } &
 \sum_{m \in C(n)} p_m \mathcal{C}_{t m}^k (  x_n ) = \mathcal{C}_{t}^k (  x_n ).
\end{array}
 \end{equation}
Also by definition of $x_m^k$ computed in the simulation of iteration $k$ we get
\begin{equation}\label{cutisddp3}
f_t( x_m^k , x_n^k , \xi_m  ) + \mathcal{Q}_{t+1}^{k-1}( x_m^k ) 
\leq {\underline{\mathfrak{Q}}}_t^{k-1}(  x_n^k ,   \xi_m  ) +  \delta_t^k.  
\end{equation}
Therefore, for $k \in \mathcal{S}_n$:
\begin{equation}\label{cutctk}
\begin{array}{lcl}
\mathcal{C}_t^k (  x_n^k   ) &   =   & \displaystyle \sum_{m \in C(n)}
p_m \mathcal{C}_{t m}^k( x_n^k ),\\
&   \stackrel{\eqref{cutisddp2}}{\geq}  & \displaystyle \sum_{m \in C(n)} p_m \left[ {\underline{\mathfrak{Q}}}_t^k(  x_n^k ,   \xi_m  )
- 2 \varepsilon_t^k \right],\\
&  \geq &  - 2 {\bar \varepsilon} +  \displaystyle \sum_{m \in C(n)} p_m  {\underline{\mathfrak{Q}}}_t^{k-1}(  x_n^k ,   \xi_m  ),\\
&  \stackrel{\eqref{cutisddp3}}{\geq} &  - 2 {\bar \varepsilon} +  \displaystyle \sum_{m \in C(n)} p_m  \left[f_t( x_m^k , x_n^k , \xi_m  ) + \mathcal{Q}_{t+1}^{k-1}( x_m^k ) - \delta_t^k \right],\\
&  \geq  &  - 2 {\bar \varepsilon} - {\bar \delta} +  \displaystyle \sum_{m \in C(n)} p_m  \left[f_t( x_m^k , x_n^k , \xi_m  ) + \mathcal{Q}_{t+1}^{k-1}( x_m^k ) \right].
\end{array}
\end{equation}
It follows that for $k \in \mathcal{S}_n$
\begin{equation}\label{eqproofconvisddp1}
\begin{array}{lcl}
0 &  \stackrel{\eqref{validcut}}{\leq} & \mathcal{Q}_t( x_n^k ) 
- \mathcal{Q}_t^k(x_n^k) 
\leq \mathcal{Q}_t( x_n^k ) - \mathcal{C}_t^k( x_n^k ) \\
 &  \stackrel{\eqref{cutctk}}  {\leq} &  2 {\bar \varepsilon} + {\bar \delta} +  \displaystyle \sum_{m \in C(n)} p_m  \left[
\mathfrak{Q}_t( x_n^k , \xi_m ) - f_t( x_m^k , x_n^k , \xi_m  ) - \mathcal{Q}_{t+1}^{k-1}( x_m^k ) \right]\\
& \leq & \displaystyle 2 {\bar \varepsilon} + {\bar \delta} +   \sum_{m \in C(n)} p_m  \Big[ \underbrace{\mathfrak{Q}_t(x_n^k , \xi_m ) -  f_t( x_m^k , x_n^k , \xi_m  )  - \mathcal{Q}_{t+1}( x_m^k ) }_{\leq 0\mbox{ by definition of }\mathfrak{Q}_t\mbox{ and }x_m^k} \Big] \\
& &  + \displaystyle  \sum_{m \in C(n)} p_m  \Big[ \mathcal{Q}_{t+1}( x_m^k )  -    \mathcal{Q}_{t+1}^{k-1}( x_m^k ) \Big].
\end{array}
\end{equation}
Using the induction hypothesis, we have for every $m \in C(n)$ that
$
\varlimsup_{k \rightarrow +\infty} \mathcal{Q}_{t+1}( x_{m}^k ) - \mathcal{Q}_{t+1}^k( x_{m}^k ) \leq ({\bar \delta}  +  2{\bar{\varepsilon}})(T-t).
$
Following the proof of Lemma 4.2 in \cite{guiguesmonteiro2019}, we obtain that sequence $(\beta_t^k)_k$ is almost surely bounded
and that functions $(\mathcal{Q}_t^k)_k$ are $L$-Lipschitz continuous
and therefore sequence  $(\mathcal{Q}_t^k)_k$ is monotone and equicontinuous.
Since $\mathcal{Q}_t$ is continuous on $\mathcal{X}_{t-1}$,  we can apply Lemma \ref{limsuptechlemma} to obtain
$
\varlimsup_{k \rightarrow +\infty} \mathcal{Q}_{t+1}( x_{m}^k ) - \mathcal{Q}_{t+1}^{k-1}( x_{m}^k ) \leq ({\bar \delta}  +  2{\bar{\varepsilon}})(T-t),
$
which, plugged into \eqref{eqproofconvisddp1}, gives
\begin{equation}\label{proofSn}
\varlimsup_{k \rightarrow +\infty, k \in \mathcal{S}_n} \mathcal{Q}_t( x_n^k ) - \mathcal{Q}_t^k ( x_n^k ) \leq ({\bar \delta}  + 2 {\bar{\varepsilon}})(T-t+1).
\end{equation}
Finally, to conclude the proof of (i), it remains to show that
\begin{equation}\label{proofnotSn}
\varlimsup_{k \rightarrow +\infty, k \notin \mathcal{S}_n} \mathcal{Q}_t( x_n^k ) - \mathcal{Q}_t^k ( x_n^k ) \leq ({\bar \delta}  + 2 {\bar{\varepsilon}})(T-t+1),
\end{equation}
and with relation \eqref{proofSn} at hand, relation \eqref{proofnotSn} can be shown by contradiction following the 
end of the proof of Theorem 4.2 in \cite{guigues2016isddp}.
\par (ii) and (iii) can be shown using (i) and following the proof of  Theorem 4.2-(ii), (iii) in \cite{guigues2016isddp}.
\par (iv) is an immediate consequence of (i) and Lemma \ref{limsuptechlemma}.
\end{proof}

We can now state our second convergence theorem for ISDDP:

\begin{theorem}[Convergence of ISDDP with vanishing errors] \label{convisddplp2}
Consider the sequences of decisions $(x_n^k)_{n \in \mathcal{N}}$ and of functions $(\mathcal{Q}_t^k)$ generated 
in the simulation of ISDDP.
Assume that (H), (H1), and (H2) hold, and that 
for all $t$ we have
$\lim_{k \rightarrow +\infty}\varepsilon_t^k = \lim_{k \rightarrow +\infty}\delta_t^k = 0$. 
Then almost surely the limit of the sequence $({\underline{\mathfrak{Q}}}_1^{k-1}( x_{0} , \xi_1 ))_{k \geq 1}$ 
is the optimal value 
$\mathcal{Q}_1( x_0 )$ of \eqref{pbtosolve}.
Moreover, for $t=2,\ldots,T$, almost surely the sequence of functions 
$(\mathcal{Q}_t^k)_k$ converges uniformly to a continuous function
$\mathcal{Q}_t^*$ which coincides with $\mathcal{Q}_t$
on every accumulation point ${\bar x}_n$ of the sequences $(x_n^k)_k$
for every node $n$ of stage $t-1$. 
\end{theorem}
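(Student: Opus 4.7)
The plan is to mirror the backward‑induction scheme used for Theorem \ref{convisddplp}, but now exploit the stronger hypothesis $\varepsilon_t^k,\delta_t^k\to 0$, so that the slack terms that produced the bound $({\bar\delta}+2{\bar\varepsilon})(T-t+1)$ there collapse to zero; consequently Lemma \ref{technicallemma} (rather than Lemma \ref{limsuptechlemma}) can be invoked. Concretely, I would prove by backward induction on $t\in\{2,\dots,T+1\}$ the statement:
\begin{equation*}
\lim_{k\to\infty}\bigl[\mathcal{Q}_t(x_n^k)-\mathcal{Q}_t^k(x_n^k)\bigr]=0\quad\text{a.s.\ for every node $n$ of stage $t-1$.}
\end{equation*}
The base case $t=T+1$ is trivial since $\mathcal{Q}_{T+1}\equiv\mathcal{Q}_{T+1}^k\equiv 0$.

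For the inductive step, fix a stage $t$, a node $n$ of stage $t-1$, and let $\mathcal{S}_n$ denote the iterations at which the sampled scenario passes through $n$. Reproducing the Slater verification \eqref{slaterbackstk} from the proof of Theorem \ref{convisddplp} and applying Proposition \ref{icutsec2} to each backward subproblem \eqref{backisddp}, one obtains \eqref{cutisddp1}--\eqref{cutisddp3} with $\varepsilon_t^k$ and $\delta_t^k$ in place of $\bar\varepsilon$ and $\bar\delta$, hence for $k\in\mathcal{S}_n$,
\begin{equation*}
0\le \mathcal{Q}_t(x_n^k)-\mathcal{Q}_t^k(x_n^k)\le 2\varepsilon_t^k+\delta_t^k+\sum_{m\in C(n)} p_m\bigl[\mathcal{Q}_{t+1}(x_m^k)-\mathcal{Q}_{t+1}^{k-1}(x_m^k)\bigr].
\end{equation*}
As in Lemma 4.2 of \cite{guiguesmonteiro2019}, the sequence $(\beta_{t+1}^k)_k$ is a.s.\ bounded, so $(\mathcal{Q}_{t+1}^k)_k$ is monotone and equicontinuous on the compact set $\mathcal{X}_t$. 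Combining the inductive hypothesis $\mathcal{Q}_{t+1}(x_m^k)-\mathcal{Q}_{t+1}^k(x_m^k)\to 0$ with the equivalence (a)$\Leftrightarrow$(b) of Lemma \ref{technicallemma} gives $\mathcal{Q}_{t+1}(x_m^k)-\mathcal{Q}_{t+1}^{k-1}(x_m^k)\to 0$; together with $\varepsilon_t^k,\delta_t^k\to 0$ this forces the right‑hand side to vanish, yielding the conclusion along $\mathcal{S}_n$.

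The main obstacle is extending the limit from $k\in\mathcal{S}_n$ to all $k$, since trial points $x_n^k$ at iterations that miss $n$ are not directly controlled by the newly added cut at that node. This is handled exactly as in the last step of the proof of Theorem 4.2 in \cite{guigues2016isddp}: assume for contradiction that some subsequence outside $\mathcal{S}_n$ has a positive limit inferior of the gap; by compactness of $\mathcal{X}_{t-1}$ extract a convergent sub‑subsequence with limit $\bar x$, use equicontinuity of $(\mathcal{Q}_t^k)_k$ together with continuity of $\mathcal{Q}_t$ to transfer the strictly positive gap to a neighborhood of $\bar x$, and then invoke assumption (H2) to argue that infinitely many indices of $\mathcal{S}_n$ must hit that neighborhood, contradicting the part already proved.

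Finally, having established $\mathcal{Q}_t(x_n^k)-\mathcal{Q}_t^k(x_n^k)\to 0$ a.s., the implication (b)$\Rightarrow$(a) of Lemma \ref{technicallemma} yields uniform convergence of $(\mathcal{Q}_t^k)_k$ to a continuous $\mathcal{Q}_t^*$ which coincides with $\mathcal{Q}_t$ on every accumulation point of $(x_n^k)_k$, settling the second assertion. The convergence $\underline{\mathfrak{Q}}_1^{k-1}(x_0,\xi_1)\to\mathcal{Q}_1(x_0)$ then follows verbatim from the argument used to derive \eqref{lbounds} in Theorem \ref{convisddplp}, since the error term $\bar\delta T+2\bar\varepsilon(T-1)$ there is replaced by $\delta_1^k+$ (vanishing induction residue), both tending to $0$.
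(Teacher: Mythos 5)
Your proposal is correct and follows essentially the same route as the paper: the paper's own proof of this theorem is simply the instruction to repeat the backward-induction argument of Theorem \ref{convisddplp} with the vanishing errors $\varepsilon_t^k,\delta_t^k$ in place of $\bar\varepsilon,\bar\delta$ and to invoke Lemma \ref{technicallemma} instead of Lemma \ref{limsuptechlemma}, which is exactly what you have spelled out. The details you supply (the Slater verification, the key inequality, the (a)$\Leftrightarrow$(b) step to pass from $\mathcal{Q}_{t+1}^k$ to $\mathcal{Q}_{t+1}^{k-1}$, and the contradiction argument for iterations outside $\mathcal{S}_n$) match the intended argument.
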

\begin{proof} It suffices to follow the proof of Theorem \ref{convisddplp} and to use 
Lemma \ref{technicallemma} instead of Lemma \ref{limsuptechlemma}. 
\end{proof}

If instead of the inexact cuts from Section \ref{sec:icut1} we use in ISDDP the inexact cuts from Section 
\ref{sec:icut2} based
on saddle point representations of the objective, we obtain similar convergence results, due to  the fact
that the error terms in both the cuts from Section \ref{sec:icut1} and from Section \ref{sec:icut2} linearly depend on 
$\delta_t^k$ and $\varepsilon_t^k$.

\section{Numerical experiments}\label{sec:numexp}

We consider the multistage nondifferentiable nonlinear  stochastic 
program given by the following DP equations:
the Bellman function for stage 
$t=1,\ldots,T$, is
$\mathcal{Q}_t(x_{t-1})=\mathbb{E}_{\xi_t,\Psi_t,U_t}
[\mathfrak{Q}_t(x_{t-1},\xi_t,\Psi_t,U_t)]$ and for
$t=1,\ldots,T$, $\mathfrak{Q}_t(x_{t-1},\xi_t,\Psi_t,U_t)$ is given by
\begin{equation}\label{pbsim}
\begin{array}{l}
\min \;f_t(x_t,x_{t-1},\xi_t,U_t) + \mathcal{Q}_{t+1}(x_t)\\
-100\,{\textbf{e}} \leq x_t \leq 100\,{\textbf{e}},\\
\max(  4(x_t - {\textbf{e}})^T (x_t-{\textbf{e}}), 
x_t^T (\xi_t \xi_t^T +\alpha I_n )x_t + x_t^T \xi_t + 1 ) \leq \Psi_t,
\end{array}
\end{equation}
where $x_t \in \mathbb{R}^n$,
$
f_t(x_t,x_{t-1},\xi_t,U_t)=
\max( (x_t-x_{t-1})^T (\xi_t \xi_t^T +\alpha I_n )(x_t-x_{t-1})
+ x_t^T \xi_t + 1, x_t^T (\xi_t \xi_t^T +\alpha I_n ) x_t + x_t^T {\textbf{e}} + U_t ),
$ {\textbf{e}} is a vector of size $n$ of
ones, and $\mathcal{Q}_{T+1}$ is the null function. In these equations, $\alpha \geq 0$ is a parameter, 
$\xi_t$ is a discretization of a Gaussian random vector
with mean vector $m_t$ having entries $1$ or $-1$ and covariance matrix $\Sigma_t=A_t A_t^T + 0.5 I$
where $A_t$ has entries in $[-0.5,0.5]$; $U_t$ is a discrete random variable taking values $+10$, $-10$,
and $\Psi_t$ has discrete distribution with support contained
in $[10^4,10^5]$. The number of realizations $N_t$ for $(\xi_t,\Psi_t,U_t)$
is fixed to $N_t=N$ for each stage. We assume
that 
$(\xi_1,\Psi_1,U_1)$ is known
and $(\xi_2,\Psi_2,U_2),\ldots,(\xi_T,\Psi_T,U_T)$ are independent.

We generate 2 instances of this problem with parameters
$\alpha=0.2$ and
$T,n,M$ given by   
$(T,n,M)=(5,10,20)$
and $(T,n,M)=(5,50,20)$. The instances are chosen taking realizations 
$\Psi_{t j}$ of $\Psi_{t}$ sufficiently large, in such a way that  
Assumption (H1)-4) holds. It is easy to check that the remaining assumptions
(H1) and (H) are satisfied and therefore SDDP
can be applied to solve the problem as well
as SDDP combined with the inexact cuts from Section
\ref{sec:icut1}. In what follows, we denote the corresponding solution
methods by {\tt{SDDP}} and {\tt{ISDDPND}}
(Inexact SDDP for nondifferentiable problems).
We also solved problem \eqref{pbsim}
using Stochastic Dynamic Cutting Plane
(denoted by {\tt{StoDCuP}}), StoDCuP combined with inexact cuts (denoted by
 {\tt{IStoDCuP}}) introduced
in \cite{guiguesmonteiro2019} as well
as the inexact variant of SDDP introduced in
\cite{guigues2016isddp} that we will denote by
{\tt{ISDDPD}} (Inexact SDDP for differentiable problems) in what follows (the interested reader can find
in the Appendix
the formulas for the inexact cuts to use for this
inexact variant of SDDP).
Observe that this inexact variant {\tt{ISDDPD}} was designed
for differentiable problems but can be applied
to \eqref{pbsim} 
reformulating the problem 
as a differentiable problem replacing in
\eqref{pbsim} each max with 2 quadratic constraints.
Finally, we consider a
mixed StoDCuP-SDDP variant (denoted by {\tt{MSDDP}})
which uses StoDCuP for the first 150 iterations and
SDDP for the remaining iterations, as well as its
inexact counterpart (denoted by {\tt{IMSDDP}})
which is StoDCuP with inexact cuts, i.e., {\tt{IStoDCuP}}, for the first 150
iterations and SDDP with the inexact cuts
from Section \ref{sec:icut1}, i.e., {\tt{ISDDPND}}, for the remaining iterations.  The Matlab implementation
of all methods can be found at 
\url{https://github.com/vguigues/ISDDP_NLP}. 
All subproblems were solved using Mosek
optimization library \cite{mosek}.

%


For the inexact variants with inexact cuts to be well defined, we also need
to set the level of accuracy of the computed
solutions along the iterations of the methods.
In our experiments, the
relative error of the subproblem solutions (Mosek
parameter MSK\_DPAR\_INTPNT\_TOL\_REL\_GAP
 whose range is any value $\geq 10^{-14}$ and default value is
$10^{-8}$) is given in Table \ref{tableacc}; see also
Remark 2 in \cite{guigues2016isddp} for other choices of sequences
of noises $\varepsilon_t^k$. For the exact variants, this parameter
was set to $10^{-10}$ for all iterations.

\begin{table}
\centering
{\small{
\begin{tabular}{|c|c|c|c|c|c|c|c|}
\hline
 Iteration   & 1-10  & 11-20    & 21-40   & 41-140
&  141-240  & 241-350 & $>350$ \\
\hline
Parameter value   & 10  & 5    & 3   &1 
&   0.5 & 0.1& e-6\\
\hline
 \end{tabular}
}}
\caption{Relative error of the subproblem solutions
along iterations of inexact methods (Mosek parameter MSK\_DPAR\_INTPNT\_TOL\_REL\_GAP).}\label{tableacc}
\end{table}

All methods
compute at each iteration
a lower bound on the optimal value which
is the optimal value of the first stage
problem solved in the forward pass
and upper bounds computed by Monte-Carlo
simulations, from iteration 400 on, using 
the last 400 forward scenarios. 
The algorithms stopped when a relative
gap of at most 0.1 was achieved
or, for the largest instance, when
the maximal number of 600 iterations
was reached.

The number of iterations before
stopping the algorithms as well
as the CPU time is given
in Table \ref{tablevarcpu} for all methods and
the two instances. 

\begin{table}
\centering
\begin{tabular}{|c|c|c|c|c|c|c|c|}
\hline
 & {\tt{IMSDDP}}   & 
 {\tt{ISDDPND}}  & {\tt{ISDDPD}} & {\tt{IStoDCuP}}    & 
{\tt{MSDDP}}  & {\tt{SDDP}} &
{\tt{StoDCuP}} \\
\hline
Iterations   &439&409 & 465 &655  &569&431&770\\
\hline
CPU time   &233.1&282.2& 322.5 &
 582.4&352.7&297.3&791.8\\
\hline
\end{tabular}
\begin{center}
$T=5, n=10, M=20$
\end{center}
\begin{tabular}{|c|c|c|c|c|c|c|c|}
\hline
 & {\tt{IMSDDP}}   & 
 {\tt{ISDDPND}}  &{\tt{ISDDPD}} & {\tt{IStoDCuP}}    & 
{\tt{MSDDP}}  & {\tt{SDDP}} &
{\tt{StoDCuP}} \\
\hline
Iterations   &400&400 & 400  & -   &400 &400&-\\
\hline
CPU time   &3 424& 4 387 & 3 237&   - &3 547&4 504&-\\
\hline
\end{tabular}
\begin{center}
$T=5, n=50, M=20, \alpha=0.2$
\end{center}
\caption{Number of iterations and CPU time in seconds needed to solve the two instances. 
For the second instance, the unfilled cells for
{\tt{IStoDCuP}} and {\tt{StoDCuP}}
indicate that these methods
had not converged after completing
the maximal number of 600 iterations.
It took {\tt{IStoDCuP}} (resp. {\tt{StoDCuP}})
2 230 s. (resp. 2 356 s.)
to complete these 600 iterations.}
\label{tablevarcpu}
\end{table}

The evolution of the upper and lower
bounds for some iterations,
all methods, and the two instances
is given in Tables 
\ref{tablelower} and \ref{tableupper}.

We observe that the sequences of upper bounds
decrease and as expected the sequences of lower bounds are
increasing
and both sequences
 converge to the same values. On these instances, {\tt{StoDCuP}}
and its inexact variant 
{\tt{IStoDCuP}} need more iterations
and time 
than the other methods to converge
(for the largest instance the maximal
number of 600 iterations was
even not enough for {\tt{StoDCuP}}
and {\tt{IStoDCuP}} to converge).
However, we observed that the first
iterations of {\tt{StoDCuP}}
and {\tt{IStoDCuP}} are much quicker
than the first iterations of {\tt{SDDP}}
and its inexact variants, which explains
the good performance of the mixed
StoDCuP-SDDP method and its inexact
counterpart. Indeed, 
{\tt{IMSDDP}} is the quickest to converge
for the first instance and the second
quickest, after {\tt{ISDDPD}}, for the largest
instance. In particular, both {\tt{MSDDP}}
and {\tt{IMSDDP}} converge
much quicker than {\tt{SDDP}}.
Out of the 8 runs of the inexact methods, only
one did not converge quicker than its
exact counterpart, namely {\tt{ISSDPD}}
for the smallest instance.
Among inexact variants {\tt{ISSDPD}}
and {\tt{ISSDPND}}  of {\tt{SDDP}},
method {\tt{ISSDPD}} was the quickest 
on the instance with
the largest value of the state vector
size $n$ ($n=50$) while
{\tt{ISSDPND}} was the quickest on
the smallest instance, which may come
from the increase in the CPU time
needed to solve subproblems
with {\tt{ISSDPND}} due to the copy
of variables used to derive the cuts.
On the other hand, {\tt{ISSDPND}}
is more general and can apply to
nondifferentiable problems contrary
to {\tt{ISSDPD}}.

\begin{table}
\centering
{\small{
\begin{tabular}{|c|c|c|c|c|c|c|c|}
\hline
{\tt{Iteration}}  & {\tt{IMSDDP}}   & 
 {\tt{ISDDPND}}  & {\tt{ISDDPD}} & {\tt{IStoDCuP}}    & 
{\tt{MSDDP}}  & {\tt{SDDP}} &
{\tt{StoDCuP}} \\
\hline
 400  &14.34&14.66& 14.32 & 5.07   &14.35&14.66&2.76\\
\hline
 409  &14.39&14.66& 14.41   &6.07   &14.41&14.66&4.67\\
\hline
 431  &14.46&-&14.45   &9.17   &14.46&14.67&7.47\\
\hline
 439  &14.48&-& 14.49 & 9.45  &14.49&-&8.95\\
\hline
 465  &-&-& 14.62  &12.80   &14.57&-&12.34\\
\hline
 500  &-&-& -  &12.80   &14.57&-&12.34\\
\hline
 569  & -&-& - & 13.71  &14.62&-&13.57\\
\hline
 770  &- &- & - & - & -& -& 13.97\\
\hline
\end{tabular}
}}
\begin{center}
$T=5, n=10, M=20, \alpha=0.2$
\end{center}

{\small{
\begin{tabular}{|c|c|c|c|c|c|c|c|}
\hline
{\tt{Iteration}}  & {\tt{IMSDDP}}   & 
 {\tt{ISDDPND}} & {\tt{ISDDPD}} & {\tt{IStoDCuP}}    & 
{\tt{MSDDP}}  & {\tt{SDDP}} &
{\tt{StoDCuP}} \\
\hline
 200  & -96 077  & 84.8 & 84.4 &-1.832e6  &-8 884&83.8&-1.843e6\\
\hline
 300  & 53.7 & 85.8 & 85.7  &-1.05e6  & 35.1 & 85.6&-1.0e6  \\
\hline
 400  & 84.6 & 85.9  & 85.9  & -6.6e5   & 84.5 & 85.9 & -7.2e5\\
\hline
 600  & - & -  &  - & -3.3e4   & - & - & -3.5e4\\
\hline
\end{tabular}
}}
\begin{center}
$T=5, n=50, M=20, \alpha=0.2$
\end{center}
\caption{Lower bounds computed along the iterations
of the methods for both instances.}\label{tablelower}
\end{table}

\begin{table}
\centering
{\small{
\begin{tabular}{|c|c|c|c|c|c|c|c|}
\hline
{\tt{Iteration}}  & {\tt{IMSDDP}}   & 
 {\tt{ISDDPND}}  & {\tt{ISDDPD}} & {\tt{IStoDCuP}}    & 
{\tt{MSDDP}}  & {\tt{SDDP}} &
{\tt{StoDCuP}} \\
\hline
 400  &20.81 &   17.79 & 20.4 &32.61 & 22.17 &  
 20.55 &   43.19  \\
\hline
 409  &19.03&15.72& 18.3&27.47  &21.94&19.79&26.32\\
\hline
 431  &16.48&-&17.1 &19.85  &18.57&16.25&16.25\\
\hline
 439 &15.89 &-&16.81 &18.78  &18.56&-& 20.03\\
\hline
 465 &- &-&15.9 &18.42  &18.14&-& 19.37\\
\hline
 500  &- &- & - &17.11   &16.75 & - &17.72\\
\hline
 569 &-&-&-  &16.42   &16.22&-&16.86\\
\hline
 770 &-&-& -&  &-&-&15.94\\
\hline
\end{tabular}
}}
\begin{center}
$T=5, n=10, M=20, \alpha=0.2$
\end{center}

{\small{
\begin{tabular}{|c|c|c|c|c|c|c|c|}
\hline
{\tt{Iteration}}  & {\tt{IMSDDP}}   & 
 {\tt{ISDDPND}} & {\tt{ISDDPD}} & {\tt{IStoDCuP}}    & 
{\tt{MSDDP}}  & {\tt{SDDP}} &
{\tt{StoDCuP}} \\
\hline
 400  & 86.22 &  86.7  & 86.1 &  21 348 & 87.7 & 89.0  &   19 538 \\
\hline
 600  & - &  -  & - &9 342 & - & -  &   7 231 \\
\hline
\end{tabular}
}}
\begin{center}
$T=5, n=50, M=20, \alpha=0.2$
\end{center}
\caption{Upper bounds computed along the iterations
of the methods for both instances..}\label{tableupper}
\end{table}

\section{Conclusion}

In \cite{guigues2016isddp}, an inexact variant of SDDP called ISDDP was introduced. Two variants of 
the method were described in \cite{guigues2016isddp}: one for linear problems and one for nonlinear differentiable problems. In this paper, we explained how to extend ISDDP for nondifferentiable multistage stochastic programs.
We provided formulas to compute inexact cuts for value functions of possibly nondifferentiable 
optimization problems and combined these cuts with SDDP to describe 
two new inexact variants of SDDP, one for each of the classes of cuts derived (the cuts from Section 
\ref{sec:icut1} and the cuts from Section \ref{sec:icut2}).

Several comments are in order:
\begin{itemize}
\item the variants of ISDDP presented in this paper can be used both for nonlinear differentiable
and nonlinear nondifferentiable optimization problems.
\item For errors bounded from above by $\varepsilon$, same as ISDDP for linear programs introduced in \cite{guigues2016isddp}, ISDDP variants of this paper provide $3\varepsilon T$-optimal first stage 
solutions. Using the analysis of Section \ref{sec:comparison}, it is easy to check that 
ISDDP for nonlinear stochastic programs from \cite{guigues2016isddp} provides for bounded errors 
a $O(T \sqrt{\varepsilon})$-optimal first stage solution. 
However, all subproblems solved in the forward and backward passes of the variant of ISDDP
that uses the cuts from Section \ref{sec:icut1}
have additional variables and constraints; the number of additional variables and constraints being
the size of $x_{t-1}$ for stage $t$.
\item All variants of ISDDP from \cite{guigues2016isddp} and from this paper converge
to an optimal policy for vanishing noises. The convergence analysis of ISDDP applied 
to nonlinear programs in \cite{guigues2016isddp} was however more technical due to 
the fact that the error terms in the inexact cuts were not a linear function of $\delta_t^k$
and $\varepsilon_t^k$ (see Proposition 5.4 in \cite{guigues2016isddp}).
\end{itemize}

\section{Appendix}

\begin{lemma}\label{lemmasumcocoercive} Assume that $F_i:\mathbb{R}^m \rightarrow \mathbb{R}^m$ is $L_i$-co-coercive for 
$i=1,\ldots,n$. Then $\sum_{i=1}^n F_i$ is $(\sum_{i=1}^n L_i)$-co-coercive. 
\end{lemma}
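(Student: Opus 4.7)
The plan is straightforward once we set up the right notation. Fix $x, y \in \mathbb{R}^m$ and write $d = y-x$ and $a_i = F_i(y) - F_i(x)$ for $i=1,\ldots,n$, so that
$$\Big(\sum_{i=1}^n F_i\Big)(y) - \Big(\sum_{i=1}^n F_i\Big)(x) = \sum_{i=1}^n a_i.$$
With this notation, the target inequality becomes $\big(\sum_i L_i\big)\,\langle d, \sum_i a_i\rangle \geq \|\sum_i a_i\|^2$.

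First I would use the $L_i$-co-coercivity of each $F_i$ to get $\langle d, a_i\rangle \geq \|a_i\|^2/L_i$ (the $L_i>0$ assumption is needed here to divide), and then sum over $i$ to obtain
$$\Big\langle d, \sum_{i=1}^n a_i\Big\rangle \;\geq\; \sum_{i=1}^n \frac{\|a_i\|^2}{L_i}.$$
Next I would apply the Cauchy--Schwarz inequality to the vectors $(\sqrt{L_i})_{i}$ and $(\|a_i\|/\sqrt{L_i})_{i}$ in $\mathbb{R}^n$, yielding
$$\Big(\sum_{i=1}^n \|a_i\|\Big)^2 \;=\; \Big(\sum_{i=1}^n \sqrt{L_i}\cdot \frac{\|a_i\|}{\sqrt{L_i}}\Big)^2 \;\leq\; \Big(\sum_{i=1}^n L_i\Big)\Big(\sum_{i=1}^n \frac{\|a_i\|^2}{L_i}\Big).$$
Finally, the triangle inequality gives $\|\sum_i a_i\| \leq \sum_i \|a_i\|$, so squaring (both sides nonnegative) and chaining the previous two bounds produces
$$\Big\|\sum_{i=1}^n a_i\Big\|^2 \;\leq\; \Big(\sum_{i=1}^n \|a_i\|\Big)^2 \;\leq\; \Big(\sum_{i=1}^n L_i\Big)\Big(\sum_{i=1}^n \frac{\|a_i\|^2}{L_i}\Big) \;\leq\; \Big(\sum_{i=1}^n L_i\Big)\Big\langle d, \sum_{i=1}^n a_i\Big\rangle,$$
which is precisely the $(\sum_i L_i)$-co-coercivity of $\sum_i F_i$.

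There is no real obstacle: the only nontrivial step is recognizing the correct weighted Cauchy--Schwarz pairing that turns the individual co-coercivity bounds $\|a_i\|^2/L_i$ into a single bound involving $(\sum_i L_i)$. Everything else (summing the co-coercivity inequalities, triangle inequality) is routine.
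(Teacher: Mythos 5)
Your proof is correct and follows essentially the same route as the paper: sum the individual co-coercivity inequalities and then establish the key bound $\|\sum_i a_i\|^2 \leq \left(\sum_i L_i\right)\sum_i \|a_i\|^2/L_i$. The only difference is cosmetic: the paper obtains that bound by applying the convexity of $\|\cdot\|^2$ with weights $\alpha_i = L_i/\sum_j L_j$ directly to the vectors $a_i/\alpha_i$, whereas you first pass to scalars via the triangle inequality and then use Cauchy--Schwarz; both are standard proofs of the same weighted inequality.
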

\begin{proof} We can assume w.l.o.g that all  $L_i$ are positive.
Let $S(x)=\sum_{i=1}^n F_i(x)$, $L=\sum_{i=1}^n L_i>0$ , and $\alpha_i=\frac{L_i}{L}$. 
Observing that $\sum_{i=1}^n \alpha_i=1$ and
using the convexity of $\|\cdot\|^2$ we get:
\begin{equation}
\begin{array}{lcl}
\langle y-x, S(y)-S(x)\rangle & \geq & \sum_{i=1}^n \frac{1}{L_i} \|F_i(x) - F_i(y)\|^2 \\
&  = & \frac{1}{L} \sum_{i=1}^n \alpha_i \| \frac{1}{\alpha_i} ( F_i(x) - F_i(y) )\|^2 \\
&  \geq & \frac{1}{L} \|S(y)-S(x)\|^2,
\end{array}
\end{equation}
which achieves the proof of the lemma.
\end{proof}

\par {\textbf{Proof of Lemma \ref{technicallemma}.}}
  Implication (a)$\Rightarrow$(b) holds trivially.
  Suppose (b) holds.
  Since $X$ is compact and $f$ is continuous, the sequence
  $\set{f^n(x_n)}$ is bounded. Combining this result with the
  compactness of  $X$ and the equicontinuity of $\set{f^n}$ we conclude
  that this sequence is pointwise uniformly bounded. Hence the
  monotone sequence $\set{f^n(x)}$ converges for any $x\in X$.  
  Recall that $Y_*$ is the set of limit points of $\set{x_n}$
  and let $g:X\to\R$ be the pointwise limit of $\set{f^n}$ that is,
  $$
  \begin{array}{l}
    g(x)=\lim_{n\to\infty}f^n(x)\;\;\;(x\in X).\\
  \end{array}
 $$ 
  We claim that
  \begin{enumerate}
  \item $g$ is  continuous;
  \item $\set{f^n}$ converges uniformly to $g$;
  \item $g(y)=f(y)$ for any $y\in Y_*$.
  \end{enumerate}
  Continuity of $g$ follows from the
  equicontinuity of $\set{f^n}$ and its convergence to $g$. Since
  $\set{f^n}$ is a sequence of equicontinuous functions converging
  monotonically in a compact set to a continuous function $g$, this
  convergence is uniform. To prove item 3, suppose that
  $\lim_{j\to\infty} x_{n_j}=y$.  Direct use of the triangle inequality
  yields
  \begin{align*}
    \abs{f^{n_j}(y)-f(y)} \leq
    \abs{f^{n_j}(y)-f^{n_j}(x_{n_j})}+
    \abs{f^{n_j}(x_{n_j})-f(x_{n_j})}+
    \abs{f(x_{n_j})-f(y)}.
  \end{align*}
  It follows from the equicontinuity of $\set{f^n}$, the continuity of
  $f$, and the convergence of $\set{x_{n_j}}$ to $y$ that the first and
  third terms in the right-hand side of the above inequality converge
  to $0$, while it follows from Assumption (b) that the
  middle term also converges to $0$. Since $\set{f^{n_j}(y)}$
  converges to $g(y)$, we have $g(y)=f(y)$.
  
  To end the proof, take $\varepsilon>0$.
  There exists $M_0\in\N$ such that
  \begin{align*}
    m\geq M_0\Rightarrow \abs{f^m(x)-g(x)}<\varepsilon \qquad \forall x\in X.
  \end{align*}
  It follows from the continuity of $f$ and $g$, and from
  the compactness of $X$
  that there is $\delta>0$ such
  that
  \begin{align*}
    d(x,x')\leq \delta\Rightarrow
    \;\abs{f(x)-f(x')}\leq\varepsilon,
    \;\abs{g(x)-g(x')}\leq\varepsilon.
  \end{align*}
  It follows from the definition of $Y_*$ and the compactness of $X$ that
  there is $N_0\in\N$ such that $d(x^n,Y_*)<\delta$ for $n\geq N_0$.
  Suppose that $m\geq M_0$ and $n\geq N_0$. There is $y\in Y_*$ such
  that $d(x^n,y)<\delta$.
  Therefore
  \begin{equation}\label{lemmalasteq}
 \begin{array}{lcl}  
    \abs{f^m(x_n)-f(x_n)}
    & \leq &\abs{f^m(x_n)-g(x_n)}+\abs{g(x_n)-g(y)}
      +\abs{g(y)-f(x_n)}\\
      & =&\abs{f^m(x_n)-g(x_n)}+\abs{g(x_n)-g(y)}
        +\abs{f(y)-f(x_n)}<3\varepsilon,
  \end{array}
  \end{equation}
  which achieves the proof of the lemma.$\hfill \square$\\

\par {\textbf{Proof of Lemma \ref{limsuptechlemma}.}} 
The proof is a simple extension of the proof of Lemma \ref{technicallemma}.
We outline the changes in the proof below.
Since the sequence  $f^{n}(x_n)-f(x_n)$ is bounded from above and $f$ is continuous on the compact
set $X$, the sequence $f^n(x_n)$ is bounded from above. Same as in Lemma \ref{technicallemma},
together with the equicontinuity, the monotonicity of $f^n$, and the compactness of $X$, this
implies that the sequence $f^n(x)$ converges for every $x \in X$ uniformly
to a continuous function $g$. 
For every $y \in Y_*$,
taking $\{x_{n_j}\}$ satisfying $y = \lim_{j \rightarrow \infty} x_{n_j}$, we get 
$$
\begin{array}{lll}
|g(y)-f(y)|&=&|\lim_{j \rightarrow \infty} f^{n_j}(y) - f(\lim_{j \rightarrow \infty} x_{n_j})|
= |\lim_{j \rightarrow \infty} f^{n_j}(y) - f( x_{n_j})| \\
& \leq  & | \lim_{j \rightarrow \infty} f^{n_j}(y) - f^{n_j}(x_{n_j})|  + | \lim_{j \rightarrow \infty} f^{n_j}(x_{n_j}) - f(x_{n_j})|=S. 
\end{array}
$$
To conclude, it suffices to modify the last inequality \eqref{lemmalasteq} in Lemma \ref{technicallemma} by 
$$
 \begin{array}{l}  
    \abs{f^m(x_n)-f(x_n)}
     \leq \abs{f^m(x_n)-g(x_n)}+\abs{g(x_n)-g(y)}
      +\abs{g(y)-f(x_n)}
    \\
       \leq \abs{f^m(x_n)-g(x_n)}+\abs{g(x_n)-g(y)}
        +\abs{g(y)-f(y)}+\abs{f(y)-f(x_n)}\\
        \leq  S+ 3\varepsilon,
  \end{array}
$$
which concludes the proof of the lemma.$\hfill \square$\\

\par {\textbf{Formulas for inexact cuts for ISDDP from
\cite{guigues2016isddp} applied to problem  \eqref{pbsim}.}} The inexact cut for ISDDP from
\cite{guigues2016isddp} applied to problem  \eqref{pbsim} for $\mathcal{Q}_t$
takes the form 
$\mathcal{C}_t^k(x_{t-1})=
\theta_t^k - \eta_t^k 
+ \langle \beta_t^k , x_{t-1} \rangle
$
for iteration $k$.
This cut is computed as follows.
Given trial point $x_{t-1}^k$
we compute for $j=1,\ldots,N_t$, an
approximate optimal 
primal-dual
solution $(f_{tj}^*, q_{tj}^*, x_{t j}^{*},\lambda_{1 j}^*)$
of 
\begin{equation}\label{pbsimsddp}
\begin{array}{l}
\min_{f,q,x_t} \;f + q\\
f \geq (x_t-x_{t-1}^k)^T (\xi_{t j} \xi_{t j}^T +\alpha I_n )(x_t-x_{t-1}^k) + x_t^T \xi_{t j} + 1,\;\;[\lambda_{1 j}]\\
f \geq x_t^T (\xi_{t j} \xi_{t j}^T +\alpha I_n ) x_t + x_t^T {\textbf{e}} + U_{t j},\\
4(x_t - {\textbf{e}})^T (x_t-{\textbf{e}}) \leq \Psi_{t j},\\
x_t^T (\xi_{t j} \xi_{t j}^T +\alpha I_n )x_t + x_t^T \xi_{t j} + 1  \leq \Psi_{t j},\\
-100\,{\textbf{e}} \leq x_t \leq 100\,{\textbf{e}},\\
q \geq \theta_{t+1}^i + \langle \beta_{t+1}^i , x_t \rangle - \eta_{t+1}^i,\;i=0,\ldots,k,
\end{array}
\end{equation}
where $\lambda_{1 j}^*$ is an approximate 
value for the optimal Lagrange multiplier associated to
the first constraint (any approximate
primal-dual solution can be used, for
instance running a few iterations
of a quadratic solver).
We then define the Lagrangian 
$
L(f,q,x_t, x_{t-1}, \lambda_1,\xi_t)
=f+q+\lambda_{1}((x_t-x_{t-1})^T (\xi_t \xi_t^T +\alpha I_n )(x_t-x_{t-1}) + x_t^T \xi_t + 1-f)
$
obtained dualizing the
coupling constraint and compute
for $j=1,\ldots,N_t$,
the optimal value 
$\eta_{tj}^k$ of
$$
\begin{array}{l}
\displaystyle \min_{f,q,x_t} \;(1-\lambda_{1 j}^*)(f-f_{t j}^*)+ \langle 
\lambda_{1 j}^*(\xi_{t j}+2(\xi_{t j} \xi_{t j}^T+\alpha I_n)(x_{t j}^*-x_{t-1}^k)),x_t-x_{t j}^* \rangle +  q - q_{t j}^*\\
{\bar f}_{t j} \geq f \geq x_t^T (\xi_{t j}\xi_{t j}^T +\alpha I_n ) x_t + x_t^T {\textbf{e}} + U_{t j},\\
4(x_t - {\textbf{e}})^T (x_t-{\textbf{e}}) \leq \Psi_{t j},\\
x_t^T (\xi_{t j} \xi_{t j}^T +\alpha I_n )x_t + x_t^T \xi_{t j} + 1  \leq \Psi_{t j},\\
-100\,{\textbf{e}} \leq x_t \leq 100\,{\textbf{e}},\\
q \geq \theta_{t+1}^i + \langle \beta_{t+1}^i , x_t \rangle
-\eta_{t+1}^i,\;i=0,\ldots,k,\\
\end{array}
$$
where ${\bar  f}_{t j}$ is an
upper bound for $f_t(\cdot,\cdot,\xi_{t j})$
on $\mathcal{X}_t \times \mathcal{X}_{t-1}:=
\{x_t: -100\,{\textbf{e}} \leq x_t \leq 100\,{\textbf{e}}\}
\times \{x_{t-1}: -100\,{\textbf{e}} \leq x_{t-1} \leq 100\,{\textbf{e}}\}.
$ 
Setting
$
\beta_{t j}^k = 
2 \lambda_{1 j}^{*}(\xi_{t j}\xi_{t j}^T + \alpha I_n)(x_{t-1}^k-x_{t j}^*)
$
and
$$
\theta_{t j}^k = 
L(f_{t j}^*,q_{t j}^*,x_{t j}^*, x_{t-1}^k, \lambda_{1 j}^*,\xi_{t j})- \langle \beta_{t j}^k ,
x_{t-1}^k \rangle,
$$
the coefficients
$\theta_t^k, \eta_t^k, \beta_t^k$
of the cut $\mathcal{C}_t^k$
are given by 
$$
\theta_t^k = \sum_{j=1}^{N_t} p_{t j} \theta_{t j}^k,\;
\beta_t^k = \sum_{j=1}^{N_t} p_{t j} \beta_{t j}^k,\;\mbox{and }
\eta_t^k = \sum_{j=1}^{N_t} p_{t j} \eta_{t j}^k.
$$
If instead of approximate primal-dual solutions we compute exact primal-dual solutions, we get $\eta_{t j}^k=0$,
$L(f_{t j}^*,q_{t j}^*,x_{t j}^*, x_{t-1}^k, \lambda_{1 j}^*,\xi_{t j})=f_{t j}^*+
q_{t j}^*$ and we get the usual cut
computed by SDDP applied to convex problems.

\addcontentsline{toc}{section}{References}
\bibliographystyle{plain}
\bibliography{SIAM_Jan_2020}

\begin{thebibliography}{10}

\bibitem{mosek}
E.~D. Andersen and K.D. Andersen.
\newblock {\em The MOSEK optimization toolbox for MATLAB manual. Version 9.2},
  2019.
\newblock \url{https://www.mosek.com/documentation/}.

\bibitem{bandarraguigues}
M.~Bandarra and V.~Guigues.
\newblock Single cut and multicut sddp with cut selection for multistage
  stochastic linear programs: convergence proof and numerical experiments.
\newblock {\em Computational Management Science, to appear}.
\newblock \url{https://arxiv.org/abs/1902.06757}.

\bibitem{nembook}
A.~Ben-Tal, L.~El Ghaoui, and A.~Nemirovski.
\newblock {\em {Robust Optimization}}.
\newblock Princeton University Press, 2009.

\bibitem{nem}
A.~Ben-Tal, A.~Goryashko, E.~Guslitzer, and A.~Nemirovski.
\newblock Adjustable robust counterpart of uncertain linear programs.
\newblock {\em Mathematical Programming}, 99:351--376, 2003.

\bibitem{nemmodernconvex}
A.~Ben-Tal and A.~Nemirovski.
\newblock {\em {Lectures on Modern Convex Optimization}}.
\newblock MOS-SIAM Series on Optimization, 2001.

\bibitem{benderscut}
J.F. Benders.
\newblock {Partitioning Procedures for Solving Mixed-Variables Programming
  Problems}.
\newblock {\em Nmer. Math.}, 4:238--252, 1962.

\bibitem{birgemulti}
J.R. Birge.
\newblock Decomposition and partitioning methods for multistage stochastic
  linear programs.
\newblock {\em Oper. Res.}, 33:989--1007, 1985.

\bibitem{birgedono}
J.R. Birge and C.~J. Donohue.
\newblock {The Abridged Nested Decomposition Method for Multistage Stochastic
  Linear Programs with Relatively Complete Recourse}.
\newblock {\em Algorithmic of Operations Research}, 1:20--30, 2001.

\bibitem{chenpowell99}
Z.L. Chen and W.B. Powell.
\newblock {Convergent Cutting-Plane and Partial-Sampling Algorithm for
  Multistage Stochastic Linear Programs with Recourse}.
\newblock {\em {J. Optim. Theory Appl.}}, 102:497--524, 1999.

\bibitem{shap192}
L.~Ding and A.~Shapiro.
\newblock Stationary multistage programs.
\newblock {\em Optimization Online}, 2019.
\newblock \url{http://www.optimization-online.org/DB_HTML/2019/09/7367.html}.

\bibitem{lecphilgirar12}
P.~Girardeau, V.~Leclere, and A.B. Philpott.
\newblock On the convergence of decomposition methods for multistage stochastic
  convex programs.
\newblock {\em Mathematics of Operations Research}, 40:130--145, 2015.

\bibitem{guiguesinexactsmd}
V.~Guigues.
\newblock {Inexact Stochastic Mirror Descent for two-stage nonlinear stochastic
  programs}.
\newblock {\em Mathematical Programming, to appear}.
\newblock \url{https://arxiv.org/pdf/1805.11732.pdf}.

\bibitem{guiguescoap2013}
V.~Guigues.
\newblock {SDDP for some interstage dependent risk-averse problems and
  application to hydro-thermal planning}.
\newblock {\em Computational Optimization and Applications}, 57:167--203, 2014.

\bibitem{guiguessiopt2016}
V.~Guigues.
\newblock Convergence analysis of sampling-based decomposition methods for
  risk-averse multistage stochastic convex programs.
\newblock {\em SIAM Journal on Optimization}, 26:2468--2494, 2016.

\bibitem{guigues2016isddp}
V.~Guigues.
\newblock {Inexact cuts in Stochastic Dual Dynamic Programming}.
\newblock {\em Siam Journal on Optimization}, 30:407--438, 2020.

\bibitem{guiguesmonteiro2019}
V.~Guigues and R.~Monteiro.
\newblock {Stochastic Dynamic Cutting Plane for multistage stochastic convex
  programs}.
\newblock {\em Journal of Optimization Theory and Applications, to appear}.
\newblock \url{https://arxiv.org/abs/1912.11946}.

\bibitem{guiguesrom10}
V.~Guigues and W.~R\"omisch.
\newblock Sampling-based decomposition methods for multistage stochastic
  programs based on extended polyhedral risk measures.
\newblock {\em SIAM J. Optim.}, 22:286--312, 2012.

\bibitem{hise96}
J.L. Higle and S.~Sen.
\newblock {\em {Stochastic Decomposition}}.
\newblock Kluwer, Dordrecht, 1996.

\bibitem{resa}
M.~Hindsberger and A.~B. Philpott.
\newblock Resa: A method for solving multi-stage stochastic linear programs.
\newblock {\em SPIX Stochastic Programming Symposium}, 2001.

\bibitem{sddip}
Z.~Jikai, S.~Ahmed, and X.A. Sun.
\newblock Stochastic dual dynamic integer programming.
\newblock {\em Mathematical Programming}, 175:461--502, 2019.

\bibitem{kozmikmorton}
V.~Kozmik and D.P. Morton.
\newblock {Evaluating policies in risk-averse multi-stage stochastic
  programming}.
\newblock {\em {Mathematical Programming}}, 152:275--300, 2015.

\bibitem{kuhn11}
D.~Kuhn, W.~Wiesemann, and A.~Georghiou.
\newblock Primal and dual linear decision rules in stochastic and robust
  optimization.
\newblock {\em Mathematical Programming}, 130:177--209, 2011.

\bibitem{shap191}
R.~P. Liu and A.~Shapiro.
\newblock Risk neutral reformulation approach to risk averse stochastc
  programming.
\newblock {\em arXiv}, 2019.
\newblock \url{https://arxiv.org/abs/1901.01302}.

\bibitem{nesterov03}
Y.~Nesterov.
\newblock Smooth minimization of non-smooth functions.
\newblock {\em Mathematical Programming Series}, 103:127--152, 2005.

\bibitem{pereira}
M.V.F. Pereira and L.M.V.G Pinto.
\newblock Multi-stage stochastic optimization applied to energy planning.
\newblock {\em Math. Program.}, 52:359--375, 1991.

\bibitem{bonnansphilp19}
A.~Philpott, J.F. Bonnans, and F.~Wahid.
\newblock Midas: A mixed integer dynamic approximation scheme.
\newblock {\em Mathematical Programming, to appear}.

\bibitem{philpot}
A.~B. Philpott and Z.~Guan.
\newblock On the convergence of stochastic dual dynamic programming and related
  methods.
\newblock {\em Oper. Res. Lett.}, 36:450--455, 2008.

\bibitem{powellbook}
W.P. Powell.
\newblock {\em {Approximate Dynamic Programming}}.
\newblock John Wiley and Sons, 2nd edition, 2011.

\bibitem{shapsddp}
A.~Shapiro.
\newblock Analysis of stochastic dual dynamic programming method.
\newblock {\em European Journal of Operational Research}, 209:63--72, 2011.

\bibitem{shadenrbook}
A.~Shapiro, D.~Dentcheva, and A.~Ruszczy\'nski.
\newblock {\em {Lectures on Stochastic Programming: Modeling and Theory}}.
\newblock SIAM, Philadelphia, 2009.

\bibitem{sddip2}
Z.~Shixuan and X.A. Sun.
\newblock Stochastic dual dynamic programming for multistage stochastic
  mixed-integer nonlinear optimization.
\newblock {\em arXiv:1912.13278}, 2019.

\bibitem{yunlongmonteiro}
H.~Yunlong and R.D.C. Monteiro.
\newblock {Accelerating Block-Decomposition First-Order Methods for Solving
  Composite Saddle-Point and Two-Player Nash Equilibrium Problems}.
\newblock {\em {SIAM Journal on Optimization}}, 25:2182--2211, 2015.

\bibitem{philpzakinex}
G.~Zakeri, A.B. Philpott, and D.M. Ryan.
\newblock {Inexact Cuts in Benders Decomposition}.
\newblock {\em SIAM Journal on Optimization}, 10:643--657, 2000.

\end{thebibliography}

\end{document}


\maketitle

\section{A detailed example}

Here we include some equations and theorem-like environments to show
how these are labeled in a supplement and can be referenced from the
main text.
Consider the following equation:
\begin{equation}
  \label{eq:suppa}
  a^2 + b^2 = c^2.
\end{equation}
You can also reference equations such as \cref{eq:matrices,eq:bb} 
from the main article in this supplement.

\lipsum[100-101]

\begin{theorem}
  An example theorem.
\end{theorem}

\lipsum[102]
 
\begin{lemma}
  An example lemma.
\end{lemma}

\lipsum[103-105]

Here is an example citation: \cite{KoMa14}.

\section[Proof of Thm]{Proof of \cref{thm:bigthm}}
\label{sec:proof}

\lipsum[106-114]

\section{Additional experimental results}
\Cref{tab:foo} shows additional
supporting evidence. 

\begin{table}[htbp]
  \caption{Example table}
  \label{tab:foo}
  \centering
  \begin{tabular}{|c|c|c|} \hline
   Species & \bf Mean & \bf Std.~Dev. \\ \hline
    1 & 3.4 & 1.2 \\
    2 & 5.4 & 0.6 \\ \hline
  \end{tabular}
\end{table}

\bibliographystyle{siamplain}
\bibliography{references}